\newtheorem{theorem}{Theorem}
\theoremstyle{plain}
\newtheorem{corollary}[theorem]{Corollary}
\newtheorem{lemma}[theorem]{Lemma}
\numberwithin{equation}{section}
\def\ri{{\rm i}}
\newcommand{\R}{\mathbb{R}}
\newcommand{\C}{\mathbb{C}}
\newcommand{\N}{\mathbb{N}}
\newcommand{\Z}{\mathbb{Z}}
\newcommand{\cB}{{\mathcal B}}
\newcommand{\cD}{{\mathcal D}}
\newcommand{\cE}{{\mathcal E}}
\newcommand{\cH}{{\mathcal H}}
\newcommand{\cL}{{\mathcal L}}
\newcommand{\cP}{{\mathcal P}}
\renewcommand{\phi}{\varphi}
\newcommand{\bpm}{{\begin{pmatrix}}}
\renewcommand{\phi}{\varphi}
\newcommand{\pa}{{\partial}}
\def\barl#1{\overline{#1}}
\DeclareMathOperator{\curl}{curl}
\DeclareMathOperator{\const}{const.}
\DeclareMathOperator{\Id}{Id}
\DeclareMathOperator{\kernel}{Ker}
\DeclareMathOperator{\range}{Rg}
\DeclareMathOperator*{\essinf}{ess\,inf}
\DeclareMathOperator*{\esssup}{ess\,sup}
\newcommand{\dist}{\text{\rm dist}}
\newcommand{\supp}{\text{\rm supp}}
\def\blem{\begin{lemma}}\def\elem{\end{lemma}}
\def\bthm{\begin{theorem}}\def\ethm{\end{theorem}}
\def\bcor{\begin{corollary}}\def\ecor{\end{corollary}}
\def\beq{\begin{equation}}\def\eeq{\end{equation}}
\begin{document}

\title{Ground States of a Nonlinear Curl-Curl Problem in Cylindrically 
Symmetric Media}

\renewcommand\shorttitle{Ground States of a Nonlinear Curl-Curl Problem}

\author{Thomas Bartsch}
\address{Th. Bartsch \hfill\break 
Institut f\"ur Mathematik, University of Giessen\hfill\break
D-35392 Giessen, Germany}
\email{thomas.bartsch@math.uni-giessen.de}

\author{Tom\'{a}\v{s} Dohnal}
\address{T. Dohnal \hfill\break 
Fachbereich Mathematik, University of Dortmund\hfill\break
D-44221 Dortmund, Germany}
\email{tomas.dohnal@math.tu-dortmund.de}

\author{Michael Plum}
\address{M. Plum \hfill\break 
Institut f\"ur Analysis, Karlsruhe Institute of Technology (KIT)\hfill\break
D-76128 Karlsruhe, Germany}
\email{michael.plum@kit.edu}

\author{Wolfgang Reichel}
\address{W. Reichel \hfill\break 
Institut f\"ur Analysis, Karlsruhe Institute of Technology (KIT), \hfill\break
D-76128 Karlsruhe, Germany}
\email{wolfgang.reichel@kit.edu}

\date{\today}

\subjclass[2000]{Primary: 35Q60, 35Q61, 58E15; Secondary: 47J30, 78A25}

\keywords{curl-curl problem, Maxwell's equation, ground state, variational methods, symmetric subspace, concentration compactness}

\begin{abstract} We consider the nonlinear curl-curl problem $\nabla\times\nabla\times U + V(x) U= \Gamma(x)|U|^{p-1}U$ in $\R^3$ related to the nonlinear Maxwell equations for monochromatic fields. We search for solutions as minimizers (ground states) of the corresponding energy functional defined on subspaces (defocusing case) or natural constraints (focusing case) of $H(\curl;\R^3)$. Under a cylindrical symmetry assumption on the functions $V$ and $\Gamma$ the variational problem can be posed in a symmetric subspace of $H(\curl;\R^3)$. For a strongly defocusing case $\esssup \Gamma <0$ with large negative values of $\Gamma$ at infinity we obtain ground states by the direct minimization method. For the focusing case $\essinf \Gamma >0$ the concentration compactness principle produces ground states under the assumption that zero lies outside the spectrum of the linear operator $\nabla \times \nabla \times +V(x)$. Examples of cylindrically symmetric functions $V$ are provided for which this holds. 
\end{abstract}
\maketitle


\section{Introduction}

For given functions $V\in L^\infty(\R^3)$, $\Gamma\in L^\infty_{loc}(\R^3)\setminus\{0\}$ we consider the nonlinear curl-curl problem 
\beq\label{E:curl-curl-gen}
\nabla\times\nabla\times U + V(x) U= \Gamma(x)|U|^{p-1}U  \quad \mbox{ in } \R^3,
\eeq
where $p>1$, and look for real weak solutions
$$
U\in X := H(\curl;\R^3)\cap L_{|\Gamma|}^{p+1}(\R^3),
$$ 
where $L_{|\Gamma|}^{p+1}(\R^3)$ denotes the space of $L^{p+1}$-functions with respect to the measure $|\Gamma|\,dx$ and $H(\curl;\R^3)$ is the space of functions $U\in L^2(\R^3)$ for which $\curl U$ is defined in the sense of distributions and $\curl U\in L^2(\R^3)$; cf. Section~\ref{var_form} for more details on these spaces.
The solutions $U\in X$ of \eqref{E:curl-curl-gen} arise as critical points of the functional 
$$
J[U]= \int_{\R^3} \frac{1}{2}(|\nabla \times U|^2+ V(x) |U|^2) - \frac{\Gamma(x)}{p+1}|U|^{p+1}\,dx, \quad U\in X.
$$
We find ground state solutions, i.e. minimizers of $J$ within a certain subspace (defocusing case) or a natural constraint (focusing case) of $X$. Note that although we limit our attention to real solutions, the methods are in principle applicable in the complex case  $U(x)\in \C^3$ as well. 

\subsection{Variational aspects of the curl-curl problem}


In the literature there are only few results on the nonlinear curl-curl problem. In \cite{benci_fortunato_archive} Benci, Fortunato opened the discussion about ground states for the problem
\begin{equation}
\nabla\times \nabla \times U = W'(|U|^2)U.
\label{curlcurl_general}
\end{equation}
The problem was solved by Azzollini, Benci, D'Aprile, Fortunato in \cite{ABDF06} using variational and symmetry-based methods. Using a different class of symmetries D'Aprile, Siciliano also obtained in \cite{daprile_siciliano} solutions of \eqref{curlcurl_general}. Recently, Bartsch and Mederski \cite{bartsch_mederski} considered ground states as well as bound states of \eqref{curlcurl_general} on a bounded domain $\Omega$ with the boundary condition $\nu\times U=0$ on $\partial\Omega$. In \cite{Mederski_2014} Mederski considers \eqref{E:curl-curl-gen} where, e.g., the right hand side is of the form $\Gamma(x) f(u)$ with $f(u)\sim |u|^{p-1} u$ if $|u|\gg 1$ and $f(u)\sim |u|^{q-1}u$ if $|u|\ll 1$ for $1<p<5<q$ and where $\Gamma>0$ is periodic and bounded, $V\leq 0$, $V\in L^\frac{p+1}{p-1}(\R^3)\cap L^\frac{q+1}{q-1}(\R^3)$.

\medskip

Let us point out that on top of the common obstacle of $J$ being unbounded from below in case $\Gamma>0$, the variational formulation has the following additional difficulties:
\begin{itemize}
\item For $p>1$ the space $H(\curl;\R^3)$ does not embed into $L_{|\Gamma|}^{p+1}(\R^3)$ even when $\Gamma$ is bounded. In the so-called focusing case $\Gamma>0$ it is therefore hard to control the $X$-norm of any Palais-Smale sequence $(U_k)_{k\in \N}$, i.e., any sequence with $(J[U_k])_{k\in\N}$ bounded and $J'[U_k]\to 0$ as $k\to \infty$. 
\item Note that $\|\nabla U\|_2^2 = \|\nabla \times U\|_2^2+\|\nabla\cdot U\|_2^2$. Hence restriction of $J$ to the space $X_0=\{U\in X: \nabla\cdot
U=0\}$ on one hand allows at least for $\Gamma\in L^\infty(\R^3)$ the embedding $X_0\to L_{|\Gamma|}^{p+1}(\R^3)$ but on the other hand it generates an additional gradient term in the
Euler-Lagrange equation.
\end{itemize}
Therefore, finding critical points of $J$ directly in the whole space $X$ is out of the
scope of the current paper. Instead we will look for critical points on a
suitable subspace by exploiting symmetries of \eqref{E:curl-curl-gen}. As
proposed in \cite{ABDF06}, one such subspace is given by functions $U$ of the
form
\beq \label{symmetric_form}
U(x)=\frac{u(r,x_3)}{r}\begin{pmatrix} -x_2 \\ x_1\\ 0 \end{pmatrix}, \quad  r^2 =x_1^2+x_2^2,
\eeq
where $u: (0,\infty)\times \R\to \R$ is a real valued, scalar function. Assuming
cylindrical symmetry also for the potentials $V$ and $\Gamma$, i.e.,
$V=V(r,x_3), \Gamma=\Gamma(r,x_3)$, this ansatz leads to the equation
$$
\left(-\pa_r^2 -\pa_{x_3}^2-\frac{1}{r}\pa_r + \frac{1}{r^2}+V(r,x_3)\right)u=\Gamma(r,x_3)|u|^{p-1}u.
$$
We also define the linear operator in the vector valued equation \eqref{E:curl-curl-gen} as
\beq \label{def_Lvec}
\cL:=(\nabla\times\nabla\times) + V(x)
\eeq
and study its spectrum $\sigma(\cL)$ when restricted to a suitable subspace of functions which exhibit symmetries like the functions given in \eqref{symmetric_form}. Under the above symmetry assumptions, we will study \eqref{E:curl-curl-gen} in the following three scenarios:
\begin{itemize}
\item Fully radially symmetric case: $V=V(\rho)$, $\Gamma=\Gamma(\rho)$ with $\rho^2=r^2+x_3^2$.
\item Strongly defocusing case: $\esssup_{\R^3} V<0$ and  $\Gamma(x) \leq -C(1+|x|)^\alpha$ for some constants
$C>0$ and $\alpha>\frac{3(p-1)}{2}$.
\item Focusing case: $\essinf_{\R^3} \Gamma>0$, $0\not \in \sigma(\cL)$. Examples of such potentials $V(r,x_3)$ are given. They are periodic in the $x_3$-direction, satisfy $\lim_{r\to \infty} V(r,x_3)=V_\infty(x_3)$ and $\esssup_\R V_\infty(x_3) >0>\essinf_{\R^3} V$. Hence the potential has non-vanishing negative and, as $r\to \infty$, also non-vanishing positive part.
\end{itemize}
From a physical point of view the latter two scenarios may be criticized. Because $\Gamma$ corresponds to the electric susceptibility of the considered medium, see Section~\ref{S:physics}, the strongly defocusing case implies unrealistically high defocusing nature of the material. And since $V(x)$ is proportional to $-n^2(x)$, where $n$ is the refractive index, the condition of the non-vanishing positive part of $V$ at infinity in the focusing case implies an imaginary refractive index. Hence it will be desirable to overcome these limitations in future work.

\subsection{Main results}

Now we state our main results. The first result is concerned with those solutions of \eqref{E:curl-curl-gen} that are fully radially symmetric. 

\begin{theorem}[Fully radially symmetric case] \label{T:rad_sym}
Let $p>1$ and assume that $V,\Gamma\in L^\infty_{loc}(\R^3)$ and $0 \leq V \Gamma^{-1} \in L^\frac{p}{p-1}_{loc}(\R^3)$. Additionally suppose the full radial symmetry of $V$ and $\Gamma$ in $\R^3$, i.e. $V(x)=\tilde V(|x|)$ and $\Gamma(x)=\tilde\Gamma(|x|)$ for almost all $x\in \R^3$ and $\tilde V, \tilde \Gamma \in L^\infty_{loc}([0,\infty))$. Under the full radial symmetry condition $U(x)=M^T U(Mx)$ for a.a. $x\in \R^3$ and all $M\in O(3)$, all distributional solutions $U\in L^p_{loc}(\R^3)$ of \eqref{E:curl-curl-gen} satisfy $\nabla\times U=0$ and have the form 
\begin{align}\label{E:U_rad_sym}
U(x) = s(|x|) \left(\frac{V(x)}{\Gamma(x)}\right)^\frac{1}{p-1} \frac{x}{|x|},
\end{align}
where $s:(0,\infty)\to\{-1,1\}$ is an arbitrary measurable function. If additionally $(V\Gamma^{-1})^\frac{2}{p-1}$, $(V\Gamma^{-1})^\frac{p+1}{p-1}\Gamma\in L^1(\R^3)$ then $U\in X$ and hence it is a critical point of $J$. 
\end{theorem}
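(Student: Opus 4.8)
The plan is to read off the explicit form of $U$ directly from the full $O(3)$-equivariance, in three steps, and then to check membership in $X$ under the extra integrability: (i) reduce $U$ to a radial vector field parallel to $x$; (ii) deduce $\nabla\times U=0$, hence $\nabla\times\nabla\times U=0$; (iii) read off the pointwise algebraic relation the reduced equation imposes.

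For step (i) I would pass to mollifications $U_\eps:=U*\rho_\eps$ with a \emph{radial} mollifier $\rho_\eps$. Since $U\in L^p_{loc}(\R^3)\subset L^1_{loc}(\R^3)$, each $U_\eps$ is smooth on $\R^3$, and the $O(3)$-invariance of $\rho_\eps$ makes $U_\eps$ inherit the symmetry \emph{exactly}: $U_\eps(Mx)=MU_\eps(x)$ for all $M\in O(3)$ and all $x$. Applying this to the $M$ fixing a given $x\neq0$ shows $U_\eps(x)$ is fixed by the stabiliser $O(2)$ of $x$, hence lies in $\mathrm{span}(x)$; applying it to general $M$ then forces the proportionality factor to depend on $|x|$ only, so $U_\eps(x)=g_\eps(|x|)x$ with $g_\eps$ smooth on $(0,\infty)$. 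Letting $\eps\to0$ along a subsequence converging a.e.\ gives $U(x)=g(|x|)x$ a.e.\ for some measurable $g$.

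For step (ii), the classical identity $\nabla\times\big(g_\eps(|x|)x\big)=\tfrac{g_\eps'(|x|)}{|x|}(x\times x)+g_\eps(|x|)(\nabla\times x)=0$ gives $\nabla\times U_\eps=0$; since $U_\eps\to U$ in $L^1_{loc}$ this passes to the limit, so $\nabla\times U=0$ in the sense of distributions and therefore $\nabla\times\nabla\times U=0$. Substituting into \eqref{E:curl-curl-gen} leaves $V(x)U(x)=\Gamma(x)|U(x)|^{p-1}U(x)$ as an identity of $L^1_{loc}$ functions, hence a.e.; so for a.e.\ $x$ either $U(x)=0$ or $|U(x)|^{p-1}=V(x)\Gamma(x)^{-1}$ (the hypothesis $V\Gamma^{-1}\geq0$ making the latter meaningful, and forcing $U(x)=0$ where $V\Gamma^{-1}=0$). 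As $|U(x)|=|g(|x|)|\,|x|$ and $V\Gamma^{-1}$ depend on $|x|$ alone and $U$ points along $x$, this is exactly \eqref{E:U_rad_sym} with $s=\sign(g)$, extended arbitrarily into $\{-1,1\}$ where it is unconstrained.

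Finally, under the two extra integrability hypotheses I would verify the defining properties of $X$ one by one: $\nabla\times U=0\in L^2(\R^3)$; $\|U\|_{L^2}^2=\int_{\R^3}(V\Gamma^{-1})^{2/(p-1)}\,dx<\infty$; and $\int_{\R^3}|\Gamma|\,|U|^{p+1}\,dx=\int_{\R^3}\big|(V\Gamma^{-1})^{(p+1)/(p-1)}\,\Gamma\big|\,dx<\infty$. Hence $U\in X$, and for every $\phi\in X$ one has $J'[U]\phi=\int_{\R^3}\nabla\times U\cdot\nabla\times\phi+\big(VU-\Gamma|U|^{p-1}U\big)\cdot\phi\,dx=0$, so $U$ is a critical point of $J$. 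The crux of the argument is steps (i)--(ii): the mollification device is what legitimises both the symmetry reduction and the curl computation at the low regularity $U\in L^p_{loc}$; once $\nabla\times\nabla\times U=0$ is established the rest is algebra and bookkeeping, the only further delicate point being the precise interpretation of $(V\Gamma^{-1})^{1/(p-1)}$ and the passage from the a.e.\ dichotomy to the stated form of $U$.
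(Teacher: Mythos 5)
Your argument is correct, and it reaches the two key facts (radial form of $U$ and $\nabla\times U=0$) by a genuinely different mechanism than the paper. The paper isolates these facts in Lemma~\ref{symmetry}(b): from the pointwise symmetry it gets $U(x)=f(|x|)\tfrac{x}{|x|}$, then constructs the scalar potential $F(r)=\int_1^r f(s)\,ds$, checks $F(|\cdot|)\in L^1_{loc}(\R^3)$ by an explicit polar-coordinate estimate, and concludes $U=\nabla\bigl(F(|\cdot|)\bigr)$ weakly, hence curl-free; this has the side benefit of exhibiting every fully symmetric solution as a gradient field, which the paper exploits rhetorically right after the theorem. You instead mollify with a radial kernel: $U_\eps$ is exactly $O(3)$-equivariant and smooth, hence of the form $g_\eps(|x|)x$ with vanishing curl by the classical identity, and both properties pass to the a.e./distributional limit. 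This avoids the construction of $F$ and the measure-theoretic bookkeeping (choosing a full-measure set on which the symmetry holds for all $M$ simultaneously), at the mild cost of the extraction argument needed to see that the limiting proportionality factor is again a function of $|x|$ alone; your sketch of that extraction is brief but repairable by a Fubini/polar-coordinates remark. From there on, both proofs coincide: curl-freeness reduces \eqref{E:curl-curl-gen} to the algebraic identity $VU=\Gamma|U|^{p-1}U$ a.e., which yields \eqref{E:U_rad_sym}, and the extra integrability gives $U\in X$ and $J'[U]=0$.

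The one item you omit that the paper proves explicitly is the converse direction: that for an \emph{arbitrary} measurable $s:(0,\infty)\to\{-1,1\}$ the field \eqref{E:U_rad_sym} is itself a distributional solution (this is what justifies the word ``arbitrary'' in the statement and makes the final critical-point claim apply to every such $U$, not just to the particular solution you started from). This is not a real gap for you, because your own tools deliver it in two lines: the hypothesis $V\Gamma^{-1}\in L^{\frac{p}{p-1}}_{loc}$ puts \eqref{E:U_rad_sym} in $L^p_{loc}$, the field is $O(3)$-equivariant since $V\Gamma^{-1}$ and $s(|\cdot|)$ are radial, so your mollification argument gives $\nabla\times U=0$, and the algebraic identity $\Gamma|U|^{p-1}U=VU$ holds pointwise by construction. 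You should, however, state this step (as the paper does with its ``reversely assume'' passage) so that the second sentence of the theorem is proved for every admissible $s$.
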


Thus, the assumption of full radial symmetry does not lead to interesting solutions of \eqref{E:curl-curl-gen}. We therefore relax the fully radial symmetry and look for solutions having only cylindrical symmetry. For this purpose we use in Theorem~\ref{defoc} and Theorem~\ref{foc} the space $X_{G_1}$
which will be defined in Section~\ref{var_form} and may be thought of as the subspace of $X=H(\curl;\R^3)\cap L_{|\Gamma|}^{p+1}(\R^3)$
consisting of vector fields of the form~\eqref{symmetric_form}. 

\begin{theorem}[Strongly defocusing case] Let $p>1$ and assume that $V=V(r,x_3)$ and $\Gamma=\Gamma(r,x_3)$ have cylindrical symmetry and satisfy
\begin{itemize}
\item[(i)] $\Gamma(x) \leq -C(1+|x|)^\alpha$ in $\R^3$ with $\alpha >
\frac{3}{2}(p-1)$ and $C>0$,
\item[(ii)] $V\in L^\infty(\R^3)$ and  $\esssup_{\R^3} V<0$. 
\end{itemize} 
Then \eqref{E:curl-curl-gen} has a ground state on $X_{G_1}$.
\label{defoc}
\end{theorem}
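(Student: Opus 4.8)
The plan is to obtain the ground state by the direct method of the calculus of variations, exploiting the sign conditions to make $J$ coercive on $X_{G_1}$. First I would reduce $J$ to the scalar functional associated with the ansatz~\eqref{symmetric_form}: writing $U$ as in \eqref{symmetric_form}, the functional becomes, up to a constant,
\[
I[u]=\int_{(0,\infty)\times\R}\tfrac12\Big(|\pa_r u|^2+|\pa_{x_3}u|^2+\tfrac{1}{r^2}u^2+V u^2\Big)-\tfrac{\Gamma}{p+1}|u|^{p+1}\;r\,dr\,dx_3,
\]
and since $\esssup V<0$ and $\Gamma<0$, both the quadratic part and the nonlinear part contribute \emph{negatively} only through $-\tfrac12|\esssup V|\,\|u\|_{2}^2$ — wait, more carefully: the curl term is nonnegative, the $V$-term is bounded below by $-\|V\|_\infty\|U\|_2^2$, but the crucial point is that $-\Gamma/(p+1)>0$ so the nonlinear term is $+\tfrac{|\Gamma|}{p+1}\|U\|_{p+1}^{p+1}\ge 0$. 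Thus $J[U]\ge \tfrac12\|\nabla\times U\|_2^2 - \tfrac12\|V\|_\infty\|U\|_2^2 + \tfrac{1}{p+1}\int|\Gamma||U|^{p+1}$, and the growth $|\Gamma(x)|\ge C(1+|x|)^\alpha$ with $\alpha>\tfrac32(p-1)$ is exactly what is needed to dominate the negative $L^2$ term via an interpolation/Hölder argument: split $\R^3$ into $\{|x|\le R\}$ and its complement; on the outer region the weighted $L^{p+1}$ norm controls $\|U\|_{L^2}$ there by Hölder (the exponent $\alpha>\tfrac32(p-1)$ makes $(1+|x|)^{-\alpha}$ integrable against the right power), while on the bounded region one uses that $U$ lies in $L^2_{loc}$ and absorbs into the curl term or into a small multiple of the global $L^{p+1}$ norm. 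This yields $J[U]\to\infty$ as $\|U\|_X\to\infty$, i.e. coercivity on $X_{G_1}$.

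Next I would check that $J$ is bounded below and weakly sequentially lower semicontinuous on $X_{G_1}$. Lower semicontinuity of $\tfrac12\|\nabla\times U\|_2^2$ and of the weighted $L^{p+1}$ term follows from convexity and Fatou; the only term with the "wrong" sign is $\tfrac12\int V|U|^2$, but since $\esssup V<0$ this term is \emph{also} of a favorable sign up to the lower bound $-\tfrac12\|V\|_\infty\|U\|_2^2$ — in fact the combination $\tfrac12\|\nabla\times U\|_2^2+\tfrac12\int V|U|^2$ need not be weakly lsc by itself, so I would instead argue: take a minimizing sequence $(U_k)$, which by coercivity is bounded in $X_{G_1}$; pass to a weakly convergent subsequence $U_k\rightharpoonup U$ in $H(\curl;\R^3)$ and in $L^{p+1}_{|\Gamma|}$. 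For the $V$-term I would use that $U_k\to U$ strongly in $L^2_{loc}$ (via a Rellich-type compactness in the cylindrically symmetric subspace — here the $1/r^2$ term in the scalar reduction provides extra compactness near the axis, and the weight $|\Gamma|\to\infty$ at infinity provides tightness at infinity, so in fact $U_k\to U$ strongly in $L^2(\R^3)$), whence $\int V|U_k|^2\to\int V|U|^2$. Combined with weak lsc of the other two (nonnegative, convex) terms, $J[U]\le\liminf J[U_k]=\inf_{X_{G_1}}J$, so $U$ is a minimizer.

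The main obstacle I expect is precisely this compactness/tightness step for the $L^2$ term: a priori $H(\curl)$ gives no compact embedding into $L^2$, and one must genuinely use the symmetry plus the weight. The clean way is to work in the scalar formulation: the bilinear form $\int |\pa_r u|^2+|\pa_{x_3}u|^2+r^{-2}u^2\,r\,dr\,dx_3$ controls a weighted Sobolev norm, the $r^{-2}$ weight kills the non-compactness associated with translations/dilations along the axis direction only partially, so one must combine it with the tightness coming from $\int|\Gamma|u^2 r\,dr\,dx_3\ge C\int(1+|x|)^\alpha u^2$ (bounded along the minimizing sequence, because $J[U_k]$ is bounded and all other terms are bounded below), which forces $\int_{|x|>R}u_k^2\to 0$ uniformly in $k$. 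Then standard Rellich on balls finishes strong $L^2$ convergence. Finally, once $U\in X_{G_1}$ is a minimizer of $J|_{X_{G_1}}$, the principle of symmetric criticality (Palais) — applicable because $X_{G_1}$ is the fixed-point space of an orthogonal group action under which $J$ is invariant — gives that $U$ is a critical point of $J$ on all of $X$, hence a weak solution of \eqref{E:curl-curl-gen}, and being a minimizer over the natural symmetric class it is a ground state. I would also note $\inf_{X_{G_1}}J$ is attained and finite (and one should exhibit a nonzero competitor to see the ground state is nontrivial, or else observe that $U\equiv 0$ itself is the ground state if $J\ge 0$ everywhere — a point worth checking, though the indefinite sign of $V$ makes it plausible that $\inf J<0$).
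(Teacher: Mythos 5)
Your overall strategy coincides with the paper's: direct minimization on $X_{G_1}$, coercivity obtained by playing the weighted $L^{p+1}_{|\Gamma|}$-norm against the negative term $-\tfrac12\|V\|_\infty\|U\|_2^2$, and convergence of the $V$-term along a minimizing sequence via strong $L^2$-convergence coming from tightness at infinity (the growing weight, since $\alpha>\tfrac32(p-1)$ makes $(1+|x|)^{-2\alpha/(p-1)}$ integrable) plus local Rellich compactness. The paper packages exactly this as Lemma~\ref{embed} (continuous embedding $L^{p+1}_{|\Gamma|}(\R^3)\hookrightarrow L^2(\R^3)$ and compact embedding $X_{G_1}\hookrightarrow L^2(\R^3)$, the latter quoted from Benci--Fortunato), and then gets weak lower semicontinuity by splitting $J=J_1+J_2$ with $J_1$ convex and $J_2$ weakly continuous; your splitting into $\{|x|\le R\}$ and its complement is an unnecessary detour (the global H\"older estimate suffices), and "absorb into the curl term" on the bounded region would not work as stated (there is no Poincar\'e-type control of $L^2$ by the curl alone), but your alternative of absorbing into a small multiple of the global weighted $L^{p+1}$-norm is correct since $|\Gamma|\ge C>0$. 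Invoking Palais' symmetric criticality at the end also matches the paper.

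The genuine gap is the nontriviality of the minimizer, which you leave as ``a point worth checking'' and even allow to fail (``$U\equiv 0$ itself is the ground state if $J\ge 0$''); you also describe the sign of $V$ as indefinite, whereas hypothesis (ii) states $\esssup_{\R^3}V<0$, and this hypothesis enters the proof \emph{only} at this step. Without showing $\inf_{X_{G_1}}J<0$ the theorem is not proved, because $J[0]=0$ and the zero minimizer is not a ground state. The paper's proof of Theorem~\ref{defoc} is precisely this missing argument: take $U_0(x)=sW(tx)$ with $W\in C_0^\infty$ of the symmetric form \eqref{symmetric_form} (so $U_0\in X_{G_1}$); rescaling gives $J[U_0]\le t^{-3}s^2\int_{\R^3}\bigl(\tfrac{t^2}{2}|\nabla\times W|^2+\tfrac{\esssup V}{2}|W|^2-\tfrac{s^{p-1}\Gamma(y/t)}{p+1}|W|^{p+1}\bigr)dy$; one first chooses $t>0$ small so that the quadratic part is negative (possible exactly because $\esssup V<0$), and then $s>0$ small so that the nonlinear contribution, which is positive since $\Gamma<0$ but carries the factor $s^{p-1}$, cannot restore nonnegativity. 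Hence $J[U_0]<0$, the infimum is negative, and the minimizer produced by your compactness argument is a nontrivial ground state of \eqref{E:curl-curl-gen}. You should add this scaling construction (or an equivalent competitor) to close the proof.
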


\begin{theorem}[Focusing case] Let $1<p<5$ and assume that $V=V(r,x_3)$ and $\Gamma=\Gamma(r,x_3)$ have cylindrical symmetry and satisfy 
\begin{itemize}
\item[(i)] $\essinf_{\R^3} \Gamma>0$,
\item[(ii)] $V,\Gamma \in L^\infty(\R^3)$ are $1$-periodic in the $x_3$-direction, i.e., $V(r,x_3)=V(r,x_3+1)$, $\Gamma(r,x_3)=\Gamma(r,x_3+1)$ for a.a. $r>0, x_3\in \R$,
\item[(iii)] $0\not \in \sigma(\cL)$.
\end{itemize}
Then \eqref{E:curl-curl-gen} has a ground state on $X_{G_1}$, which is moreover a minimizer of $J$ restricted to a natural constraint set (the so-called Nehari-Pankov manifold, cf. Section~\ref{S:foc}). 
\label{foc}
\end{theorem}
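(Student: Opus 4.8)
The plan is to reduce the vector problem \eqref{E:curl-curl-gen} on $X_{G_1}$ to the scalar periodic equation
$$
\left(-\pa_r^2 -\pa_{x_3}^2-\frac{1}{r}\pa_r + \frac{1}{r^2}+V(r,x_3)\right)u=\Gamma(r,x_3)|u|^{p-1}u
$$
for $u$ on the half-cylinder $(0,\infty)\times\R$, and then run a Nehari--Pankov (generalized Nehari manifold) argument together with the concentration-compactness principle. First I would set up the quadratic form $Q[U]=\int_{\R^3}|\nabla\times U|^2+V|U|^2$ on $X_{G_1}$, use hypothesis (iii), $0\notin\sigma(\cL)$, to obtain a direct-sum splitting $X_{G_1}=X^+\oplus X^-$ into the positive and nonpositive spectral subspaces of $\cL$, with $Q$ positive definite on $X^+$, negative definite on $X^-$, and $Q$ equivalent on each piece to the $H(\curl)$-seminorm there. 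Because $V$ is bounded, $\Gamma$ is bounded and bounded below away from zero, and $1<p<5$, the form $U\mapsto\int\Gamma|U|^{p+1}$ is well defined, weakly sequentially continuous on bounded subsets of $X_{G_1}$ modulo the lack of compactness at infinity, and $C^1$; these are the standard structural hypotheses for the Pankov machinery. I would then define the Nehari--Pankov set
$$
\cN:=\bigl\{U\in X_{G_1}\setminus X^- : J'[U]U=0,\ J'[U]W=0\ \text{for all }W\in X^-\bigr\},
$$
show every nontrivial critical point lies on $\cN$, that $c:=\inf_{\cN}J>0$, and that for each $U\in X_{G_1}\setminus X^-$ the subspace $\R_{\ge0}U\oplus X^-$ meets $\cN$ in exactly one point which is the unique maximum of $J$ there — this is where the monotonicity/convexity structure of $t\mapsto \Gamma(x)|tU+W|^{p+1}$ is used, exactly as in Pankov's and Szulkin--Weth's treatments of periodic Schr\"odinger equations.

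Next I would extract a minimizing sequence $(U_k)\subset\cN$ for $J|_{\cN}$; a standard argument (Ekeland's variational principle on $\cN$, or passing through a minimax over the maps $U\mapsto \hat m(U)\in\cN$) yields a Palais--Smale sequence $(U_k)$ for $J$ on $X_{G_1}$ at level $c$. Boundedness of $(U_k)$ in $X_{G_1}$ follows from the Nehari constraint together with the spectral gap and $p>1$, in the usual way: if $\|U_k\|\to\infty$ then the normalized sequence $V_k=U_k/\|U_k\|$ would have to vanish in the relevant sense, contradicting $J'[U_k]U_k=0$ and $c>0$. So far everything is the periodic-problem template; the genuine work is the compactness step.

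The main obstacle, and the heart of the proof, is to upgrade the bounded Palais--Smale sequence to a convergent one despite the translation invariance in $x_3$ (the potentials are only $1$-periodic in $x_3$, so the problem is invariant under the $\Z$-action $u(r,x_3)\mapsto u(r,x_3+n)$) and the noncompactness coming from $r\to\infty$. Here I would invoke the concentration-compactness principle of Lions in the periodic form: applied to the sequence of densities associated with $(U_k)$, either \emph{vanishing} occurs — which, via a Lions-type lemma on the cylinder, forces $\int\Gamma|U_k|^{p+1}\to0$ and hence $c=0$, a contradiction — or, after replacing $U_k$ by suitable $\Z$-translates $U_k(\cdot\,,\,\cdot+n_k)$, a nonzero weak limit $U\neq0$ survives; \emph{dichotomy} is excluded by the strict subadditivity of the level, which is where one uses that $V$ and $\Gamma$ are exactly periodic (not merely asymptotically so) so that the ``problem at infinity'' in $x_3$ coincides with the original one and no energy can leak off. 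One must also check that no mass escapes to $r=\infty$: the $1/r^2$ Hardy-type term in the scalar operator provides the needed coercivity/weight control, so the embedding $X_{G_1}\hookrightarrow L^{p+1}_{loc}$ is compact on sets bounded away from $r=0$ and $r=\infty$, and the weight $1/r^2$ together with boundedness of $\Gamma$ handles the large-$r$ tail. Once $U\neq0$ is obtained as a weak limit of a translated PS sequence, weak sequential continuity of the nonlinear term plus $J'[U_k]\to0$ give $J'[U]=0$, so $U$ is a nontrivial critical point, hence $U\in\cN$ and $J[U]\ge c$; the reverse inequality $J[U]\le c$ follows from weak lower semicontinuity of the parts of $J$ with the right sign together with the Nehari identity, using the Brezis--Lieb lemma to split the $L^{p+1}$-norm. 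Therefore $J[U]=c$ and $U$ is the desired ground state on $X_{G_1}$, simultaneously realized as a minimizer on the Nehari--Pankov manifold. Finally I would note that the translation by $n_k$ does not affect membership in $X_{G_1}$ since the symmetry group $G_1$ commutes with $x_3$-translations, so the limit indeed lies in the symmetric subspace.
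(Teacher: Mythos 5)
Your overall architecture matches the paper's: spectral splitting of $H^1_{G_1}(\R^3)$ into positive and negative subspaces of $\cL$ using (iii), minimization of $J$ on the Nehari--Pankov set, a bounded Palais--Smale sequence via Ekeland (the paper gets this through showing $N$ is a $C^1$-manifold on which the constraint is natural, rather than through the Szulkin--Weth fibering map, but that is an equivalent route), and then concentration compactness plus $x_3$-translations by integers, using periodicity of $V,\Gamma$, to produce a nonzero weak limit which is a critical point and, by the Nehari identity and lower semicontinuity, a minimizer. The full Lions trichotomy with strict subadditivity that you invoke to exclude dichotomy is not needed: since the weak limit is automatically a critical point and hence lies on $N$, the inequality $J[\bar U]\le\liminf J[\bar U_k]=c$ already forces equality, which is exactly how the paper closes the argument.

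However, there is a genuine gap at the heart of the compactness step: ruling out escape of mass in the radial direction. You claim that the $1/r^2$ Hardy-type term ``handles the large-$r$ tail'' and yields enough coercivity so that no mass escapes to $r=\infty$. This is not correct: that weight decays as $r\to\infty$ and gives no confinement whatsoever at large $r$; there is no compact embedding on radially unbounded sets, no periodicity or asymptotic structure of $V,\Gamma$ in $r$ is assumed, so no ``limit problem in $r$'' is available, and translations in $r$ are not symmetries, so the periodic concentration-compactness machinery simply does not apply in that direction. The paper's key idea, which your proposal is missing, is purely geometric: the nonvanishing centers $y_k$ cannot drift radially because every $U_k\in H^1_{G_1}(\R^3)$ is invariant under the $G_0$-rotations about the $x_3$-axis, so a ball $B_R(y_k)$ carrying mass $\eta$ can be rotated into arbitrarily many pairwise disjoint balls carrying the same mass once $\rho_k=\sqrt{(y_k^1)^2+(y_k^2)^2}\to\infty$, contradicting the uniform $L^2$-bound on $(U_k)$. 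This forces the concentration centers to stay in a bounded tube around the axis, after which only the $\Z$-translations in $x_3$ (which preserve $X_{G_1}$ and $J$ by (ii)) are needed. Without this symmetry argument, or an equally effective substitute, your compactness step does not go through.
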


Examples of potentials $V(r,x_3)$ with $0\not\in \sigma(\cL)$ are given in Section~\ref{sec:spectrum}. They have non-vanishing positive and negative parts. 



\subsection{Physical context of the problem}\label{S:physics}

As we show below, equation \eqref{E:curl-curl-gen} is a generalization of the Kerr nonlinear Maxwell's equations in three dimensions for monochromatic waves when higher harmonics are neglected. Solutions $U\in H^1(\R^3)$ then correspond to fully localized standing electromagnetic waves. The problem of localizing light in all three dimensions attracts strong interest in the physics community. This is partly due to the potential applications of such ``light bullets'' as information carriers in future optical logic and optical computing devices. Standing bullets, in particular, can be used in optical memory.

So far, to our knowledge, standing light bullets have not been observed in experiments: neither in homogeneous or periodic media nor in radial or cylindrical geometries, corresponding to the choice in this paper. Nevertheless, at least one theoretical prediction of such waves exists. In the Kerr nonlinear fiber Bragg grating (a cylindrical geometry with periodicity in the longitudinal direction) an asymptotic model for broad wavepackets and a small periodicity contrast supports localized waves, so called gap solitons \cite{AW89}. The model is the system of one dimensional coupled mode equations and the gap solitons come in a family including standing solutions.  Gap solitons have been experimentally observed with velocities as low as $0.23 \tfrac{c}{n}$, where $c$ is the speed of light in vacuum and $n$ the average refractive index of the fiber core \cite{Moke-etal_06}, but not with velocity zero. On the other hand, moving localized pulses have been demonstrated in numerous other nonlinear geometries including standard optical fibers \cite{Mollenauer-etal_80} and arrays of waveguides arranged in the plane \cite{Minardi-etal_2010}. In most physics articles theoretical predictions of light bullets are made based on the nonlinear Schr\"odinger equation (NLS). For instance in homogenous materials the NLS is known to have radially symmetric localized solutions, so called Townes solitons, in all dimensions \cite{Sulem}. In periodic media \cite{Pankov05} and at interfaces of two periodic structures \cite{DPR11} standing ground state $H^1$-solutions exist. The NLS is, however, only an asymptotic approximation of Maxwell's equations. Moreover, for inhomogeneous media in two and three dimensions the approximation has not been rigorously justified. This paper, in contrast, deals with the full three dimensional Maxwell problem.

The three dimensional Maxwell equations in the absence of charges and currents read
\begin{align*}
\nabla\times \cE + \partial_t \cB &=0, \qquad \nabla \cdot \cD =0, \\
\nabla\times \cH - \partial_t \cD &=0, \qquad \nabla \cdot \cB=0.
\end{align*}
Here $\cE,\cH: \R^4\to \R^3$ denote the electric and
magnetic field, respectively, and $\cD, \cB:\R^4\to \R^3$ denote the displacement field and
the magnetic induction, respectively. For the relation between the magnetic field $\cH$ and the magnetic induction $\cB$ we assume
$\cB=\mu_0\cH$ with $\mu_0$ constant. By taking the curl of the first equation one finds 
\begin{equation}
\nabla\times\nabla \times \cE + \mu_0\partial_t^2 \cD=0, \quad \nabla\cdot \cD=0.
\label{maxwell_ed}
\end{equation}
For a Kerr-type nonlinear medium the material law between the electric field $\cE$ and the displacement field $\cD$
is given by 
\begin{equation}
\cD = \epsilon_0 \left(n^2(x)\cE+\cP_{\text{NL}}(x,\cE)\right) \quad \mbox{ with } \quad \cP_{\text{NL}}(x,\cE)=
\chi^{(3)}(x) (\cE \cdot \cE) \cE,
\label{Kerr}
\end{equation}
where $n^2(x)=1+\chi^{(1)}(x)$ is the square of the linear refractive index and where $\cP_{\text{NL}}$ denotes the nonlinear part of the polarization. Note that in this section we use the notation $w \cdot z = w_1z_1 + w_2z_2+w_3z_3$ both for real and complex valued vectors $w,z \in\C^3$. The functions $\chi^{(1)}$ and $\chi^{(3)}$ denote the linear and cubic susceptibilities of the medium respectively. Although $\chi^{(3)}$ is generally a tensor, symmetries in the atomic structure of the material allow a reduction to a scalar, see \cite[Sec.~2d]{MN04}. The resulting second order equation for the electric field $\cE$ is then given by the quasilinear wave equation
\begin{equation}
\nabla\times\nabla\times \cE + \frac{1}{c^2}\partial_t^2\bigl(n(x)^2
\cE+\chi^{(3)}(x)(\cE\cdot\cE)\cE\bigr)=0, \qquad (x,t)\in \R^4
\label{quasilinear}
\end{equation}
together with $\nabla\cdot\cD=0$. Here $c=(\epsilon_0\mu_0)^{-1/2}$ is the speed of light in vacuum. If $\cE$ solves \eqref{quasilinear}, then $\cD$ is known from \eqref{Kerr} and $\cB$ can be obtained from $\nabla\times \cE$ by a time integration and thus also $\cH$ is known. Moreover, the fields $\cD, \cB$ will be divergence free provided they are divergence free at some fixed time, e.g., $t=0$. 

The question of light bullets is that of the existence of
solutions of Maxwell's equations in nonlinear dispersive media which are
localized in space, i.e., which at all times $t$ are decaying to $0$ as $|x|\to
\infty$.

In this paper we cannot give a complete answer to this question. Instead we will solve a
related problem. Motivated by Fourier-expansion in time, one might look for a solution of \eqref{quasilinear} of the form $\cE(x,t)=\sum_{k=0}^\infty \left(e^{-\ri(2k+1)\omega
t}E_k(x)+\text{c.c.}\right)$ with $E_k(x)\in \C^3$. If such solution existed under the
additional assumption of localization, i.e., $\cE(x,t)$ decaying to $0$ as $|x|\to
\infty$ for all $t$, then it would be a standing light bullet. Here we consider the simpler monochromatic ansatz 
\begin{equation}
\cE(x,t)=e^{-\ri\omega t}E(x)+\text{c.c.} \mbox{ with } E(x)\in \C^3.
\label{monochromatic}
\end{equation}
If we insert these monochromatic fields into the constitutive relation \eqref{Kerr} and neglect the generation of higher harmonics, i.e., we cancel all terms with factors $e^{\pm3\ri \omega t}$, then we obtain the new simplified constitutive relation
$\cD = \epsilon_0 \left(n^2(x)\cE+\cP_{\text{NL}}^{\text{(a)}}(x,\cE)\right)$ with 
\begin{equation}
\cP_{\text{NL}}^{\text{(a)}}(x,e^{-\ri\omega t}E+\text{c.c.})=\chi^{(3)}(x) e^{-\ri\omega t} \left(2|E|^2 E+(E\cdot E)\bar E\right)+ \text{c.c.}.
\label{Kerr_new}
\end{equation}
Note that here $E\cdot E=E_1^2+E_2^2+E_3^2\in \C$ whereas $|E|^2= |E_1|^2+|E_2|^2+|E_3|^2$ denotes the Hermitian inner product of $E$ with itself.
The second order elliptic equation for the $E$-field resulting from \eqref{maxwell_ed} is 
\beq\label{E:curl-curl}
\nabla \times \nabla \times E -\frac{\omega^2}{c^2}\bigl(n^2(x)E +\chi^{(3)}(x)\big(2|E|^2\,E+(E\cdot E)\,\barl{E}\big)=0,
\qquad x\in \R^3.
\eeq
Note that the divergence conditions $\nabla \cdot \cD =0$ is automatically satisfied due to the monochromatic ansatz and the curl-curl structure of the equation. 

Another model of the nonlinear polarization which effectively removes higher harmonics and results in equation \eqref{E:curl-curl} is given by time-averaging $\cE \cdot \cE$. In detail, for a $T-$periodic $\cE(x,t)\cdot \cE(x,t)$ one defines
\begin{equation}
\cP_{\text{NL}}^{\text{(b)}}(x,\cE)=\chi^{(3)}(x) \frac{1}{T}\int_0^T \cE(x,t)\cdot \cE(x,t) dt \ \cE(x,t),  \label{Kerr_b}
\end{equation}
see, e.g., \cite{Stuart_93,Sutherland03}. For $\cE$ as in \eqref{monochromatic}, where $T=\pi/\omega$, we get the same for as in \eqref{Kerr_new}, i.e.
$$\cP_{\text{NL}}^{\text{(a)}}(x,e^{-\ri\omega t}E+\text{c.c.})=\cP_{\text{NL}}^{\text{(b)}}(x,e^{-\ri\omega t}E+\text{c.c.}).$$
 
To sum up, we may say that a solution $E:\R^3\to \C^3$ of \eqref{E:curl-curl} gives via \eqref{monochromatic} rise to a complete solution of the Maxwell system provided we consider the constitutive relation \eqref{Kerr_new} or \eqref{Kerr_b} instead of \eqref{Kerr}. 

With the notation
$$
V(x):=-\frac{\omega^2}{c^2}n^2(x), \qquad
\Gamma(x):=3\frac{\omega^2}{c^2}\chi^{(3)}(x)
$$
equation \eqref{E:curl-curl} reads
\beq \label{E:curl-curl-2}
\nabla \times \nabla \times E +V(x)E =  \frac{1}{3}\Gamma(x)\big(2|E|^2\,E+(E\cdot E)
\,\barl{E}\big) \mbox{ in } \R^3.
\eeq
Restricting to real valued solutions $E\in H^1(\R^3)$, equation \eqref{E:curl-curl-2} is equivalent to \eqref{E:curl-curl-gen} with $p=3$.

\medskip

\subsection{Structure of the paper}
The rest of the paper is structured as follows. In Section \ref{var_form} Theorem \ref{T:rad_sym} is first proved. Next, for the case of cylindrical symmetry of $V$ and $\Gamma$  a subspace of $X$ is chosen in which minimization of $J$ is possible. In Section~\ref{S:defoc} the strongly defocusing case (i.e. Theorem \ref{defoc}) is handled by the direct minimization method. Sections~\ref{sec:spectrum} and \ref{S:foc} treat the more delicate focusing case (i.e. Theorem \ref{foc}). In Section~\ref{sec:spectrum} we study the spectrum of the linear operator in \eqref{E:curl-curl-gen} and find examples of $V$ for which zero lies outside the spectrum. This is a necessary condition for our minimization approach. Finally, in Section~\ref{S:foc} $J$ is minimized on the so called Nehari-Pankov manifold within the symmetric subspace using the concentration-compactness principle.
 
\section{Variational formulation of \eqref{E:curl-curl-gen}} \label{var_form}

We begin with the definition of some spaces of vector valued functions $U:\R^3\to \R^3$.  For a measurable weight-function $\sigma:\R^3\to (0,\infty)$ the corresponding weighted $L^q$-space for $1\leq q<\infty$ is defined by 
$$
L_\sigma^q(\R^3) = \left\{ U: \R^3\to \R^3: \int_{\R^3}
\sigma(x)\,|U|^q\,dx <\infty\right\}
$$
with the norm 
$$
\|U\|_{\sigma,q} = \left(\int_{\R^3}
\sigma(x)\,|U|^q\,dx\right)^\frac{1}{q}.
$$
The space $H^1(\R^3)$ is defined by 
\begin{align*}
H^1(\R^3) &= \{ U:\R^3\to \R^3: U^i, \frac{\partial U^i}{\partial x_j} \in
L^2(\R^3) \mbox{ for } i,j=1,2,3\}, 
\end{align*} 
with the norm
$$
\|U\|_{H^1}^2 = \int_{\R^3} \sum_{i,j=1}^3 \left(\frac{\partial U^i}{\partial
x_j}\right)^2+ |U|^2\,dx.
$$
The space $H(\curl;\R^3)$ is defined by 
$$
H(\curl;\R^3) = \{ U:\R^3\to \R^3: U, \nabla\times U \in
L^2(\R^3)\}, 
$$
with the norm
$$
\|U\|_{H(\curl)}^2 = \int_{\R^3} |\nabla\times U|+ |U|^2\,dx
$$
and where $\nabla\times U$ is understood in the distributional sense, i.e., it satisfies $\int_{\R^3} \left(\nabla\times U\right)\cdot \phi \,dx = \int_{\R^3} U\cdot \curl\phi\,dx$ for all $C^\infty$-functions $\phi:\R^3\to\R^3$ with compact support. 

\medskip

Notice that for $U\in H^1(\R^3)$ we have the identity
$$
\int_{\R^3} |\nabla\times U|^2+(\nabla\cdot U)^2 \, dx = \int_{\R^3}
\sum_{i,j=1}^3 \left(\frac{\partial U^i}{\partial x_j}\right)^2\,dx
$$
with $\nabla\cdot U$ denoting the distributional divergence of $U$. Therefore
\begin{equation}
H(\curl;\R^3)\cap\{U: \nabla\cdot U=0\} = H^1(\R^3)\cap\{U:\nabla\cdot U=0\}.
\label{identity}
\end{equation}
This property will be used when we single out a suitable subspace of $H^1(\R^3)$, one in which we can solve
\eqref{E:curl-curl-gen}. For this purpose we first study the symmetries of
\eqref{E:curl-curl-gen}. 

\begin{lemma} \label{symmetry} Assume that the locally bounded measurable functions $V, \Gamma:\R^3\to \R^3$ are radially symmetric. 
\begin{itemize}
\item[(a)] If $U\in L_{loc}^p(\R^3;\R^3)$ is a distributional solution of \eqref{E:curl-curl-gen} and $M\in O(3)$
then $\tilde U(x):= M^TU(Mx)$ also solves \eqref{E:curl-curl-gen} in the sense of distributions. 
\item[(b)] Suppose $U:\R^3\to \R^3$ satisfies $U(x)=M^T U(Mx)$ for a.a. $x\in \R^3$ and all $M\in O(3)$. Then $U(x) = f(|x|)\frac{x}{|x|}$ for some $f:[0,\infty)\to \R$. If additionally $U\in L^1_{\rm{loc}}( \R^3)$ then $\nabla\times U=0$ in the sense of distributions.
\end{itemize}
\end{lemma}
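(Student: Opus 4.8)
The two parts are essentially independent, so I would treat them in turn.

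\emph{Part (a).} The mechanism is that the curl–curl operator is $O(3)$-equivariant while the zeroth-order terms are $O(3)$-invariant. First I would record, for a smooth vector field $\phi$ and $M\in O(3)$, the pointwise identity
\[
\nabla\times\bigl(M^{T}\phi(M\cdot)\bigr)=\det(M)\,M^{T}(\nabla\times\phi)(M\cdot),
\]
a direct computation from $\epsilon_{ijk}M_{ai}M_{bj}M_{ck}=\det(M)\,\epsilon_{abc}$; applying it twice and using $\det(M)^{2}=1$ shows that $\nabla\times\nabla\times$ commutes with the substitution $\phi\mapsto M^{T}\phi(M\cdot)$. Then, for $\phi\in C_{c}^{\infty}(\R^{3};\R^{3})$, I would write $\langle\nabla\times\nabla\times\tilde U,\phi\rangle=\langle\tilde U,\nabla\times\nabla\times\phi\rangle$ (the definition of the distributional operator), substitute $y=Mx$ (Jacobian $1$), use the commutation identity to turn the integrand into $U(y)\cdot\nabla\times\nabla\times\bigl(M\phi(M^{T}\cdot)\bigr)(y)$, invoke that $U$ solves \eqref{E:curl-curl-gen} tested against $M\phi(M^{T}\cdot)$, and finally substitute back, using the radial symmetry $V(Mz)=V(z)$, $\Gamma(Mz)=\Gamma(z)$ and the isometry identity $|M^{T}U(Mz)|=|U(Mz)|$. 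All terms make sense distributionally since $U\in L^{p}_{loc}$ and $V,\Gamma$ are locally bounded. This is bookkeeping with no real obstacle.

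\emph{Part (b), first claim.} The delicate point is that the symmetry hypothesis holds, for each fixed $M$, only up to an $M$-dependent null set, while there are uncountably many $M$. I would resolve this by a Fubini argument on $\R^{3}\times O(3)$ equipped with (Lebesgue)$\,\times\,$(Haar): the set $E=\{(x,M):U(x)\neq M^{T}U(Mx)\}$ has every $M$-slice null, hence is null, hence for a.e.\ $x$ the slice $E_{x}=\{M:U(x)\neq M^{T}U(Mx)\}$ is Haar-null. Fix such an $x\neq0$. For any $P$ in the stabilizer $\{Q\in O(3):Qx=x\}$ one may pick $M\notin E_{x}\cup E_{x}P^{-1}$ (both Haar-null), and then $U(x)=M^{T}U(Mx)$ together with $U(x)=(MP)^{T}U(MPx)=(MP)^{T}U(Mx)$ force $U(x)=P^{T}U(x)$. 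Thus $U(x)$ is fixed by the entire stabilizer, which acts on $x^{\perp}$ as $O(2)$ and hence forces $U(x)\in\R x$, say $U(x)=a(x)\tfrac{x}{|x|}$. Substituting this back into the symmetry relation yields $a(x)=a(Mx)$ for a.e.\ $M$; since $M\mapsto Mx$ pushes Haar measure forward to the uniform measure on the sphere of radius $|x|$, $a$ is a.e.\ constant on spheres, i.e.\ $a(x)=f(|x|)$ a.e.\ for a suitable measurable $f$. The only genuine obstacle here is this measure-theoretic handling of the null sets; the rest is elementary linear algebra.

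\emph{Part (b), second claim.} Assuming in addition $U\in L^{1}_{loc}(\R^{3})$, I would avoid constructing a scalar potential and argue directly. For $\phi\in C_{c}^{\infty}(\R^{3};\R^{3})$, the coarea formula for $x\mapsto|x|$ gives
\[
\int_{\R^{3}}U\cdot(\nabla\times\phi)\,dx
 =\int_{0}^{\infty}\Bigl(\int_{\partial B_{r}}U\cdot(\nabla\times\phi)\,dS\Bigr)dr
 =\int_{0}^{\infty}f(r)\int_{\partial B_{r}}\nu\cdot(\nabla\times\phi)\,dS\,dr,
\]
where $\nu$ is the outer unit normal and $U=f(r)\,\nu$ on $\partial B_{r}$ (valid on a.e.\ sphere by the first claim, the decomposition being legitimate because $U\cdot(\nabla\times\phi)\in L^{1}(\R^{3})$). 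By the divergence theorem and $\divergenz\circ\curl=0$, $\int_{\partial B_{r}}\nu\cdot(\nabla\times\phi)\,dS=\int_{B_{r}}\divergenz(\nabla\times\phi)\,dx=0$ for every $r$, whence $\langle\nabla\times U,\phi\rangle=0$, i.e.\ $\nabla\times U=0$ in the sense of distributions. This step is short and unproblematic once the first claim is available.
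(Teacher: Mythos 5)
Your proposal is correct, and it diverges from the paper's proof in an interesting way in part (b). Part (a) is essentially the paper's argument: the authors also test against $\psi(y)=M\phi(M^Ty)$, use the equivariance $(\cL\psi)(y)=M(\cL\phi)(M^Ty)$, change variables and invoke the radial symmetry of $V,\Gamma$, so here the two proofs coincide. For the first claim of (b) the paper argues pointwise at a fixed $x$ where the symmetry relation is assumed to hold for \emph{all} $M$ simultaneously (reading the hypothesis with a single null set), and then simply writes $U(x)=f(|x|)\tfrac{x}{|x|}$; your Fubini--Haar argument is a more careful treatment that also covers the weaker reading ``for each $M$, a.e.\ $x$'' and makes the constancy of the scalar factor on spheres explicit, which the paper glosses over. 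The real difference is the curl-free claim: the paper constructs the radial potential $F(r)=\int_1^r f(s)\,ds$, proves $F(|\cdot|)\in L^1_{\rm loc}(\R^3)$ by an explicit estimate in polar coordinates, identifies $U=\nabla\bigl(F(|\cdot|)\bigr)$ weakly, and concludes $\nabla\times U=0$ because $U$ is a gradient; you instead test $\int_{\R^3}U\cdot(\nabla\times\phi)\,dx$ directly, decompose into spheres, and use that the flux of the divergence-free field $\nabla\times\phi$ through each $\partial B_r$ vanishes. Your route is shorter and avoids the $L^1_{\rm loc}$ estimate for the potential, but it only yields curl-freeness; the paper's construction additionally exhibits $U$ as a gradient field, and this representation is reused almost verbatim in the proof of Theorem~\ref{T:rad_sym}, where the candidate solution \eqref{E:U_rad_sym} is shown to solve \eqref{E:curl-curl-gen} precisely by writing it as $\nabla\bigl(F(|x|)\bigr)$. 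So both proofs are sound; the paper's buys extra structure that it needs later, yours is the more economical argument for the lemma as stated.
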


\begin{proof} (a) Let $\phi: \R^3\to \R^3$ be a $C^\infty$-function with compact support, let $M\in O(3)$ and define 
$$
\psi(y) := M \phi(M^T y), \qquad y \in \R^3.
$$
Then a direct computation yields 
$$
\nabla\times\nabla\times \psi(y) = M (\nabla\times\nabla\times \phi)(M^T y)
$$
and thus 
$$
(\cL\psi)(y) =  M(\cL\phi)(M^Ty).
$$
Therefore, if $U\in L_{loc}^p(\R^3;\R^3)$ satisfies 
$$
\int_{\R^3} U(y) \cdot (\cL\psi)(y) -\Gamma(y) |U(y)|^{p-1} U(y)\cdot \psi(y)\,dy = 0 \mbox{ for all } \psi \in C_0^\infty(\R^3;\R^3)
$$
then 
\begin{eqnarray*}
\lefteqn{\int_{\R^3} \tilde U(x) \cdot (\cL\phi)(x) - \Gamma(x) |\tilde U(x)|^{p-1} \tilde U(x)\cdot \phi(x)\,dx } \\
& = & \int_{\R^3} M^T U(y) \cdot (\cL\phi)(M^Ty) - \Gamma(y) |U(y)|^{p-1} M^TU(y)\cdot \phi(M^Ty)\,dy \\
& = & \int_{\R^3} M^T U(y) \cdot M^T \cL\psi(y)- \Gamma(y) |U(y)|^{p-1} U(y)\cdot \psi(y)\,dy \\
& = & \int_{\R^3} U(y) \cdot \cL\psi(y)- \Gamma(y) |U(y)|^{p-1} U(y)\cdot \psi(y)\,dy \\
& = & 0.
\end{eqnarray*}
(b) Let $x\in\R^3$ be such that $U(x)=M^TU(Mx)$ for all $M\in O(3)$. Then $U(x)=MU(x)$ for all those rotations $M$ which leave $x$ fixed, i.e., for all rotations around the axis $\R x$. Hence $U(x)\in\R x$ and we may write $U(x) = f(|x|)\frac{x}{|x|}$. Under the assumption $U\in L^1_{loc}(\R^3)$ we see that $f\in L^1(I)$ for any compact interval $I\subset (0,\infty)$. Therefore we may define the function $F(r):= \int_1^r f(s)\,ds$ for $r>0$  which is absolutely continuous in $\R^+$. Moreover, for any $R>1$ using polar coordinates and Fubini's theorem we see that
\begin{align*}
\int_{B_R(0)} |F(|x|)| \,dx &= 4\pi \int_0^R \left| \int_1^r f(t)\,dt \right| r^2\,dr \\
& \leq 4\pi \int_0^1 \int_r^1 |f(t)|\,dt\, r^2 \,dr + 4\pi \int_1^R \int_1^r |f(t)|\,dt\, r^2\,dr \\
& \leq \frac{4\pi}{3} \int_0^1 |f(t)|t^3\,dt + \frac{4\pi R^3}{3} \int_1^R |f(t)|\,dt \\
& \leq \frac{4\pi R^3}{3} \int_0^R |f(t)|t^2\,dt \\
& = \frac{R^3}{3}\int_{B_R(0)} |U(x)|\,dx<\infty
\end{align*} 
since $U\in L^1_{loc}(\R^3)$. Hence the function $F(|\cdot|)$ belongs to $L^1_{loc}(\R^3)$ and due to the absolute continuity of $F$ it has the strong derivative $U(x)$ almost everywhere. Since both $F(|\cdot|)$  and $U$ are $L^1_{loc}(\R^3)$, one can see that $U(x)=\nabla\left(F(|x|)\right)$ in $\R^3$ in the weak sense. This implies $\nabla\times U=0$ in the distributional sense.
\end{proof}

\noindent
{\em Proof of Theorem~\ref{T:rad_sym} :} Suppose $U\in L^p_{loc}(\R^3)$ is a distributional solution of \eqref{E:curl-curl-gen}. Lemma~\ref{symmetry} shows that the requirement of full radial symmetry of $V$ and $\Gamma$ and the solution symmetry $U(x)=M^T U(Mx)$ for a.a. $x\in \R^3$ and all $M\in O(3)$ reduces \eqref{E:curl-curl-gen} to the algebraic equation
$$
V(x) U = \Gamma(x)|U|^{p-1} U \mbox{ in } \R^3.
$$
Provided $0 \leq V \Gamma^{-1}$ the function $U$ has the form \eqref{E:U_rad_sym}. Now let us reversely assume that $U$ has the form \eqref{E:U_rad_sym}. Since $0 \leq V \Gamma^{-1} \in L^\frac{p}{p-1}_{loc}(\R^3)$ we see that $U\in L^p_{loc}(\R^3)$ and in particular $U\in L^1_{loc}(\R^3)$. Moreover, with an absolutely continuous function $F:(0,\infty)\to \R$ given by 
$$
F(t)= \int_1^t s(\tau) \left( \frac{\tilde V(\tau)}{\tilde\Gamma(\tau)}\right)^{1/(p-1)} \,d\tau, \qquad t>0
$$
we have $U(x)=\nabla\left(F(|x|)\right)$ in the distributional sense. As in Lemma~\ref{symmetry} we find $F(|\cdot|)\in L^1_{loc}(\R^3)$ and, moreover, $\nabla\times U=0$. Hence $U$ solves \eqref{E:curl-curl-gen}. Finally, the assumption $(V\Gamma^{-1})^\frac{2}{p-1}$, $(V\Gamma^{-1})^\frac{p+1}{p-1}\Gamma\in L^1(\R^3)$ implies that any $U$ defined by \eqref{E:U_rad_sym} belongs to the space $X$ and thus is a critical point of $J$.
\qed

\medskip

Although the $H(\curl;\R^3)$ solutions in Theorem~\ref{T:rad_sym} are valid localized solutions of \eqref{E:curl-curl-gen}, in the rest of the paper we consider solutions that are not gradient fields.

\medskip

Since the requirement of full radial symmetry does not lead to interesting solutions of \eqref{E:curl-curl-gen}, we look for solutions which are invariant only under a subgroup of $O(3)$ (this idea is due to Azzollini et. al. \cite{ABDF06}). For this we define the following copy of $SO(2)$ as a subset of $O(3)$
$$
G_0 := \left\{\begin{pmatrix} 
\cos \alpha  & -\sin\alpha & 0 \\
\sin\alpha & \cos\alpha & 0 \\
0 & 0 & 1
\end{pmatrix} : \alpha \in \R\right\}.
$$
Assume that the measurable weight $\sigma:\R^3\to (0,\infty)$ satisfies $\sigma(Mx)=\sigma(x)$ for all $x\in \R^3$ and all $M\in G_0$. Then the group $G_0$ operates isometrically on $L^q_\sigma(\R^3)$, on $H(\curl;\R^3)$ and on $H^1(\R^3)$ by the group action $U
\mapsto M^T U(M\cdot)$. Due to this result we can now define the corresponding $G_0$-fixed point subspaces of $L^q_\sigma(\R^3)$, $H(\curl;\R^3)$ and $H^k(\R^3)$, $k\in \N$ by
\begin{align*}
L_{\sigma,G_0}^q(\R^3) & = \{U\in L^q_\sigma(\R^3): U(x)=M^TU(Mx) \quad \forall
x\in\R^3, \forall  M\in G_0\}, \\
H_{G_0}(\curl;\R^3) & = \{U\in H(\curl;\R^3): U(x)=M^TU(Mx) \quad \forall
x\in\R^3, \forall  M\in G_0\}, \\
H_{G_0}^k(\R^3) & = \{U\in H^k(\R^3): U(x)=M^TU(Mx) \quad \forall
x\in\R^3, \forall  M\in G_0\}, \quad k\in \N, \\
X_{G_0} &=H_{G_0}(\curl;\R^3)\cap L_{|\Gamma|}^{p+1}(\R^3).
\end{align*}
Observe that the functional $J$ is invariant under the action of $G_0$. Thus, by Palais' principle of symmetric criticality \cite{palais}, \cite{willem}, every critical point of $J|_{X_{G_0}}$ is also a critical point of $J$ on $X$. Next we want to restrict the spaces $L^q_{\sigma,G_0}(\R^3)$, $H_{G_0}(\curl;\R^3)$ and $H^k_{G_0}(\R^3)$ even further. In order to do so we need two lemmas -- the first one being analogous to Lemma~\ref{symmetry}. We omit the proofs because they are contained in Lemma~1 and Proposition~1 in \cite{ABDF06}.

\begin{lemma} \label{symmetry_zyl} Suppose a measurable function $U:\R^3\to\R^3$ satisfies 
$U(x)=M^T U(Mx)$ for a.a. $x\in \R^3$ and all $M\in G_0$. Then there are unique measurable functions $Q, S, T: \R^3\to \R^3$ such that 
$$
U(x) = Q(x)+ S(x)+ T(x)
$$ 
with 
\beq
Q(x) = \frac{q(r,x_3)}{r}\begin{pmatrix} -x_2 \\ x_1 \\ 0 \end{pmatrix}, \quad S(x)=
\frac{s(r,x_3)}{r}\begin{pmatrix} x_1 \\ x_2 \\ 0 \end{pmatrix}, \quad T(x) = 
\begin{pmatrix} 0 \\ 0 \\ t(r,x_3) \end{pmatrix}.
\label{decomp}
\eeq
where $q, s, t : (0,\infty)\times \R\to \R$  are measurable function. 
If $U\in L^q_\sigma(\R^3)$ or $H^1(\R^3)$ then $Q,S,T\in L^q_\sigma(\R^3)$ or $H^1(\R^3)$, respectively.
\end{lemma}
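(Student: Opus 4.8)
\medskip
\noindent
The plan is to work in the orthonormal frame of $\R^3$ adapted to the $G_0$-action. For $x\in\R^3$ with $r=\sqrt{x_1^2+x_2^2}>0$ set
\[
e_S(x)=\frac1r\begin{pmatrix}x_1\\x_2\\0\end{pmatrix},\qquad
e_Q(x)=\frac1r\begin{pmatrix}-x_2\\x_1\\0\end{pmatrix},\qquad
e_T=\begin{pmatrix}0\\0\\1\end{pmatrix};
\]
these form an orthonormal basis of $\R^3$, depend only on the angle $\theta=\arg(x_1+\ri x_2)$ (not on $r$), and satisfy $\pa_\theta e_S=e_Q$, $\pa_\theta e_Q=-e_S$. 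For a.e.\ $x$ (i.e.\ all $x$ off the $x_3$-axis) I would expand $U(x)=\tilde s(x)e_S(x)+\tilde q(x)e_Q(x)+\tilde t(x)e_T$, the coefficients $\tilde s(x)=U(x)\cdot e_S(x)$, etc., being measurable. A short computation gives $M e_S(x)=e_S(Mx)$, $M e_Q(x)=e_Q(Mx)$, $M e_T=e_T$ for every $M\in G_0$, so rewriting the hypothesis, equivalently, as $MU(x)=U(Mx)$ and comparing coefficients in the basis $\{e_S(Mx),e_Q(Mx),e_T\}$ (linearly independent for $r>0$) yields $\tilde s(x)=\tilde s(Mx)$, $\tilde q(x)=\tilde q(Mx)$, $\tilde t(x)=\tilde t(Mx)$ for a.e.\ $x$ and all $M\in G_0$. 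A Fubini argument in cylindrical coordinates then forces $\tilde s,\tilde q,\tilde t$ to coincide a.e.\ with functions of $(r,x_3)$ only, and renaming them $s,q,t:(0,\infty)\times\R\to\R$ gives the asserted forms of $Q=q(r,x_3)e_Q$, $S=s(r,x_3)e_S$, $T=t(r,x_3)e_T$. Uniqueness is immediate, since the three frame vectors are independent wherever $r>0$ and Fubini again promotes ``a.e.\ $x$'' to ``a.e.\ $(r,x_3)$''.

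Because the frame is orthonormal, $|U(x)|^2=|Q(x)|^2+|S(x)|^2+|T(x)|^2$ for a.e.\ $x$, hence $|Q|,|S|,|T|\le|U|$ pointwise, and integrating against $\sigma\,dx$ gives $Q,S,T\in L^q_\sigma(\R^3)$ whenever $U\in L^q_\sigma(\R^3)$. For the $H^1$-statement, $T=(0,0,t)$ merely discards two Cartesian components of $U$, so $t=U_3\in H^1(\R^3)$ and $T\in H^1(\R^3)$ with $\|\nabla T\|_2\le\|\nabla U\|_2$; it remains to split $W:=U-T=S+Q\in H^1(\R^3)$.

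The key computation is the form of the Dirichlet integral in cylindrical coordinates: on $\{r>0\}$, where $(\pa_r,\tfrac1r\pa_\theta,\pa_{x_3})$ is an orthonormal frame and $s,q$ are $\theta$-independent, one has $\pa_rW=s_re_S+q_re_Q$, $\pa_\theta W=s\,e_Q-q\,e_S$, $\pa_{x_3}W=s_{x_3}e_S+q_{x_3}e_Q$, whence
\[
\int_{\R^3}|\nabla W|^2\,dx=2\pi\int_{\R}\int_0^\infty\Bigl(s_r^2+q_r^2+\frac{s^2+q^2}{r^2}+s_{x_3}^2+q_{x_3}^2\Bigr)\,r\,dr\,dx_3 .
\]
In particular the finiteness of $\|\nabla W\|_2$ carries the Hardy-type term $\int_{\R}\int_0^\infty\frac{s^2+q^2}{r^2}\,r\,dr\,dx_3<\infty$ for free -- by symmetry this term is built into the energy, whereas for an arbitrary $H^1$ field it would be uncontrollable. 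Running the same computation for $S=s(r,x_3)e_S$ and $Q=q(r,x_3)e_Q$ separately gives $\int_{\R^3}|\nabla S|^2=2\pi\int_{\R}\int_0^\infty(s_r^2+\tfrac{s^2}{r^2}+s_{x_3}^2)\,r\,dr\,dx_3$ and the analogue for $Q$, both bounded by $\int_{\R^3}|\nabla W|^2$, while $\int_{\R^3}|S|^2+\int_{\R^3}|Q|^2=\int_{\R^3}|W|^2$. So $S$ and $Q$, restricted to $\{r>0\}$, lie in $H^1(\{r>0\})$ with their (classically computed) gradients in $L^2(\R^3)$.

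The step I expect to be the real obstacle is upgrading this to $S,Q\in H^1(\R^3)$, i.e.\ showing that the distributional Cartesian derivatives of $S$ and $Q$ pick up no singular part along the $x_3$-axis. Here the symmetry is essential, and the mechanism is that the $x_3$-axis, having codimension two, is removable for $H^1(\R^3)$: given $\phi\in C^\infty_c(\R^3;\R^3)$ with $\supp\phi\subset B_R$, multiply by a logarithmic cut-off $\eta_\eps(r)$ that vanishes for $r\le\eps$, equals $1$ for $r\ge\sqrt\eps$, and satisfies $\int_{B_R}|\nabla\eta_\eps|^2\,dx\le C_R/\log(1/\eps)\to0$; then $\eta_\eps\phi\to\phi$ in $H^1$, $\eta_\eps\phi$ is supported in $\{r>0\}$, and passing to the limit in the integration-by-parts identity for $S$ and $Q$ (valid there) identifies the gradients found above as the distributional gradients on all of $\R^3$. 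These are precisely the assertions of Lemma~1 and Proposition~1 of \cite{ABDF06}, whose details the authors therefore omit.
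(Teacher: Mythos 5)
Your proposal is correct, and it is essentially the argument the paper relies on: the paper itself gives no proof of this lemma, deferring to Lemma~1 and Proposition~1 of \cite{ABDF06}, and your route (cylindrical frame decomposition, angle-independence of the coefficients from the $G_0$-invariance, the pointwise identities $|U|^2=|Q|^2+|S|^2+|T|^2$ and $|\nabla U|^2=|\nabla Q|^2+|\nabla S|^2+|\nabla T|^2$, and the zero-capacity logarithmic cut-off to rule out a singular part of the distributional gradient on the $x_3$-axis) is exactly the mechanism behind the cited results and the remark following the lemma. No gaps of substance; the only points to polish are routine (measurable selection of $q,s,t$ as functions of $(r,x_3)$ via the Fubini step, and noting that the frame computation is carried out with weak derivatives away from the axis).
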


\begin{lemma} Let $Y$ be either the spaces $L_{\sigma,G_0}^q(\R^3)$, $H_{G_0}(\curl;\R^3)$ or $H_{G_0}^k(\R^3)$, $k\in\N$ and define the map 
$$
g_1 : \left\{ \begin{array}{rcl} 
Y & \to & Y, \vspace{\jot} \\
U= Q+S+T & \mapsto & Q-S-T.
\end{array} \right.
$$
The map $g_1$ is a linear isometry and satisfies $g_1\circ g_1=Id$. Hence $G_1= \{\Id, g_1\}$ is a group of order 2. Moreover, the functional $J|_{X_{G_0}}$ is invariant under the action of $G_1$. 
\label{def_g1}
\end{lemma}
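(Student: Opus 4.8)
The plan is to obtain all the assertions from the uniqueness of the decomposition in Lemma~\ref{symmetry_zyl}, together with the observation that $g_1$ is, up to a sign, the standard action of a reflection. Every $U\in Y$ is $G_0$-symmetric, so by Lemma~\ref{symmetry_zyl} it has a \emph{unique} representation $U=Q+S+T$ with $Q,S,T$ of the form~\eqref{decomp}; by that lemma $Q,S,T$ again lie in the underlying space (for $Y=H_{G_0}(\curl;\R^3)$ this will be covered by the reflection identity below), and each of them is $G_0$-symmetric, being of the displayed shape. Hence $g_1(U)=Q-S-T\in Y$, so $g_1$ is well-defined, and it is linear because $U\mapsto(Q,S,T)$ is linear by uniqueness. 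The same uniqueness gives $g_1\circ g_1=\Id$: the decomposition of $Q-S-T$ reads $Q+(-S)+(-T)$, so a second application of $g_1$ restores the signs. Since $g_1\ne\Id$ (any $U$ with $S\not\equiv 0$ or $T\not\equiv 0$ witnesses this), $G_1=\{\Id,g_1\}$ is a group of order $2$.

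For the isometry property I would introduce the reflection $R:=\mathrm{diag}(-1,1,1)\in O(3)$, for which $R^{T}=R^{-1}=R$. Since $R$ leaves $r$ and $x_3$ invariant, a one-line substitution into~\eqref{decomp} gives $R\,Q(R\,\cdot)=-Q$, $R\,S(R\,\cdot)=S$ and $R\,T(R\,\cdot)=T$, whence
\[
g_1(U)(x)=Q(x)-S(x)-T(x)=-R\,U(Rx)\qquad\text{for a.a. }x\in\R^{3}.
\]
Moreover $R$ normalises $G_0$ (conjugating a rotation about the $x_3$-axis by $R$ yields the inverse rotation), so the map $U\mapsto R^{T}U(R\,\cdot)$ sends each $G_0$-fixed space to itself and is a linear isometry of $H_{G_0}(\curl;\R^{3})$ and of $H^{k}_{G_0}(\R^{3})$, by the same argument recalled just before Lemma~\ref{symmetry_zyl} for $G_0$, using that curl transforms as $\nabla\times(R\,U(R\,\cdot))=\det(R)\,R\,(\nabla\times U)(R\,\cdot)$. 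Combined with the obvious isometry $U\mapsto-U$ this shows $g_1$ is a linear isometry of these two spaces, and in particular $g_1$ maps $H_{G_0}(\curl;\R^{3})$ into itself, closing the gap left above. On $L^{q}_{\sigma,G_0}(\R^{3})$ the isometry is even more direct: the three vectors $Q(x),S(x),T(x)$ are mutually orthogonal in $\R^{3}$, hence $|g_1(U)(x)|=|U(x)|$ for a.a. $x$, so that $\|g_1(U)\|_{\sigma,q}=\|U\|_{\sigma,q}$.

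For the $G_1$-invariance of $J$, note that $J$ depends on $U$ only through the pointwise quantities $|U|$ and $|\nabla\times U|$. The pointwise identity $|g_1(U)|=|U|$ just established gives $\int_{\R^3} V|g_1(U)|^{2}=\int_{\R^3} V|U|^{2}$ and $\int_{\R^3}\Gamma|g_1(U)|^{p+1}=\int_{\R^3}\Gamma|U|^{p+1}$, the integrands being equal a.e. From $g_1(U)(x)=-R\,U(Rx)$ and $\det R=-1$ one gets $\nabla\times g_1(U)(x)=R\,(\nabla\times U)(Rx)$, so $|\nabla\times g_1(U)(x)|=|(\nabla\times U)(Rx)|$, and the change of variables $y=Rx$ ($|\det R|=1$) yields $\int_{\R^3}|\nabla\times g_1(U)|^{2}=\int_{\R^3}|\nabla\times U|^{2}$. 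Adding the three contributions, $J[g_1(U)]=J[U]$ for every $U\in X_{G_0}$.

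I do not foresee a real obstacle: the content is essentially bookkeeping, once the identity $g_1(U)=-R\,U(R\,\cdot)$ is verified on the three building blocks of~\eqref{decomp} and one observes that $R$ normalises $G_0$ and fixes any $G_0$-invariant weight. The only point needing a little care is the $H(\curl;\R^{3})$ case, which is not addressed by Lemma~\ref{symmetry_zyl}; should one prefer to avoid the reflection argument there, one can instead compute the curls of $Q,S,T$ directly and check that $\nabla\times Q$ lies in the radial--axial plane while $\nabla\times S$ and $\nabla\times T$ are azimuthal, so that $Q$ and $S+T$ lie separately in $H(\curl;\R^{3})$ and $|\nabla\times g_1(U)|=|\nabla\times U|$ pointwise.
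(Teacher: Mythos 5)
Your argument is correct, but it takes a genuinely different route from the one the paper has in mind. The paper omits the proof, referring to Lemma~1 and Proposition~1 of \cite{ABDF06}, and its accompanying remark indicates that the intended argument rests on the pointwise Pythagoras-type identities $|U|^2=|Q|^2+|S|^2+|T|^2$, $|\nabla U|^2=|\nabla Q|^2+|\nabla S|^2+|\nabla T|^2$ and, for the curl, the weaker splitting $|\nabla\times U|^2=|\nabla\times Q|^2+|\nabla\times(S+T)|^2$, which suffices because $g_1$ flips the sign of $S+T$ as a block; this is essentially the computational alternative you sketch in your closing paragraph. Your main proof instead identifies $g_1(U)=-R\,U(R\,\cdot)$ with the reflection $R=\mathrm{diag}(-1,1,1)$, checks that $R$ normalises $G_0$ and fixes every $G_0$-invariant weight, and then obtains linearity, $g_1\circ g_1=\Id$ (via uniqueness in Lemma~\ref{symmetry_zyl}), the isometry property and the invariance of $J$ from the standard $O(3)$-equivariance of curl together with a change of variables, supplemented by the pointwise orthogonality of $Q(x),S(x),T(x)$ for the $L^q_\sigma$, $V$- and $\Gamma$-terms. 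What your route buys is uniformity and economy: well-definedness on $H_{G_0}(\curl;\R^3)$ and on $H^k_{G_0}(\R^3)$ for every $k$ (which Lemma~\ref{symmetry_zyl} does not directly provide) comes for free from the reflection representation, and no separate computation of the curls of $Q$, $S$, $T$ is needed; what the paper's approach buys is that the curl-splitting identity is stated explicitly, which is reused conceptually in the remark. Your derivation of $\nabla\times g_1(U)(x)=R(\nabla\times U)(Rx)$ (the two sign factors $\det R=-1$ and the overall minus cancelling) is correct, as is the observation that the $V$- and $\Gamma$-terms need no symmetry of $V,\Gamma$ under $R$ since the integrands agree pointwise.
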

\noindent {\bf Remark.} The proof of the isometry and invariance statement relies on the fact that pointwise $|U|^2=|Q|^2+|S|^2+|T|^2$ and $|\nabla U|^2=|\nabla Q|^2+|\nabla S|^2+|\nabla T|^2$, cf. \cite{ABDF06}. However, for the curl only $|\nabla\times U|^2=|\nabla\times Q|^2+|\nabla\times(S +T)|^2$ holds. But this is sufficient for our claim. 


\medskip

This result allows to define the spaces 
\begin{align*}
L_{\sigma,G_1}^q(\R^3) &= \{U\in L_{\sigma,G_0}^q(\R^3): g_1 U=U \}, \\
H_{G_1}(\curl;\R^3) & = \{U\in H_{G_0}(\curl;\R^3): g_1 U = U \}, \\
H_{G_1}^k(\R^3) &= \{U\in H_{G_0}^k(\R^3): g_1 U=U \}, \quad k\in\N, \\
X_{G_1} &= H_{G_1}(\curl;\R^3)\cap L_{|\Gamma|}^{p+1}(\R^3).
\end{align*}
All the spaces with the suffix $G_1$ may be thought of as the subspaces of $L^q_\sigma(\R^3)$, $H(\curl;\R^3)$ and $H^k(\R^3)$ consisting of vector fields of the form \eqref{symmetric_form}. Again, Palais' principle of symmetric criticality ensures that every critical point of $J|_{X_{G_1}}$ is also a critical point of $J$ on $X$. Finally, note that 
\begin{equation}
H_{G_1}(\curl;\R^3)= H_{G_1}^1(\R^3)
\label{identity_G1}
\end{equation}
because the members of both spaces have vanishing divergence, cf. \eqref{identity}.

\medskip

%
%

To summarize the results of this section recall that the energy functional
related to \eqref{E:curl-curl-gen} is
$$
J[U]=\int_{\R^3} \frac{1}{2}(|\nabla \times U|^2+V(x) |U|^2) -
\frac{\Gamma(x)}{p+1}|U|^{p+1}\,dx,
$$
which is well defined on $X=H(\curl;\R^3)\cap L_{|\Gamma|}^{p+1}(\R^3)$. Due to Lemma~\ref{symmetry_zyl}, Palais' principle of symmetric criticality
(cf. Palais~\cite{palais}, Willem~\cite{willem}) and \eqref{identity_G1} can seek critical points of
the functional $J$ restricted to the subspace $X_{G_1}=H_{G_1}^1(\R^3)\cap L_{|\Gamma|}^{p+1}(\R^3)$ and these critical
points will be solutions of \eqref{E:curl-curl-gen}. The elements of the subspace $H^1_{G_1}(\R^3)$  have the favorable property of vanishing divergence.




\section{Ground states in the defocusing case}\label{S:defoc}

We assume $p>1$ and
$$
\Gamma(x) \leq -C(1+|x|)^\alpha \mbox{ in } \R^3 \mbox{ with } \alpha >
\frac{3}{2}(p-1) \mbox{ and } C>0. \leqno {\mbox{(H-defoc)}}
$$
We work in the following reflexive Banach space
$$
X_{G_1} := H_{G_1}^1(\R^3)\cap L_{|\Gamma|}^{p+1}(\R^3)
$$
where the norm on $X_{G_1}$ is given by 
$$
\|U\|_X := \|U\|_{H^1}+\|U\|_{|\Gamma|,p+1}.
$$
The basic tool for proving existence of ground states in the defocusing case is
the following embedding result, which is due to Benci, Fortunato~\cite{be_fo2}.

\begin{lemma} \label{embed} 
Assume $p>1$ and {\rm (H-defoc)}. Then the space $L_{|\Gamma|}^{p+1}(\R^3)$ embeds continuously into
$L^2(\R^3)$ and the space $X_{G_1}$ embeds compactly into $L^2(\R^3)$. 
\end{lemma}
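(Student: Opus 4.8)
The plan is to prove the two embeddings by elementary interpolation plus a local-to-global compactness argument, exploiting that the weight $|\Gamma|$ grows polynomially at infinity.

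\medskip

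\textbf{Step 1: Continuous embedding $L_{|\Gamma|}^{p+1}(\R^3)\hookrightarrow L^2(\R^3)$.} Fix $U\in L_{|\Gamma|}^{p+1}(\R^3)$. By (H-defoc) we have $|\Gamma(x)|\geq C(1+|x|)^\alpha$, so splitting the integral into the regions $\{|U(x)|\le 1\}$ and $\{|U(x)|>1\}$ and controlling the first by H\"older against the weight $(1+|x|)^{-\alpha\cdot\frac{2}{p-1}}$ (integrable on $\R^3$ precisely because $\alpha>\tfrac32(p-1)$ means $\alpha\cdot\tfrac{2}{p-1}>3$), one gets
$$
\int_{\R^3}|U|^2\,dx = \int_{\{|U|\le 1\}}|U|^2 + \int_{\{|U|>1\}}|U|^2
\le \int_{\{|U|\le 1\}}|U|^2 + \int_{\{|U|>1\}}|U|^{p+1}.
$$
On $\{|U|\le 1\}$ write $|U|^2=\bigl(|\Gamma||U|^{p+1}\bigr)^{2/(p+1)}\cdot |\Gamma|^{-2/(p+1)}|U|^{2-\frac{2(p+1)}{p+1}}$; since $|U|\le 1$ and $p>1$ the power of $|U|$ left over is nonpositive so that factor is $\ge 1$ — instead one should bound $|U|^2\le |U|^{\frac{2(p+1)}{p+1}}$ directly and use H\"older with exponents $\frac{p+1}{2}$ and $\frac{p+1}{p-1}$:
$$
\int_{\{|U|\le 1\}}|U|^2\,dx
\le \Bigl(\int_{\R^3}|\Gamma||U|^{p+1}\,dx\Bigr)^{\!\frac{2}{p+1}}
\Bigl(\int_{\R^3}|\Gamma|^{-\frac{2}{p-1}}\,dx\Bigr)^{\!\frac{p-1}{p+1}},
$$
and the last integral is finite by (H-defoc) and $\alpha\cdot\frac{2}{p-1}>3$. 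Combining, $\|U\|_2^2\le c\,\|U\|_{|\Gamma|,p+1}^2 + \|U\|_{|\Gamma|,p+1}^{p+1}$, which gives the continuous embedding.

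\medskip

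\textbf{Step 2: Compact embedding $X_{G_1}\hookrightarrow L^2(\R^3)$.} Let $(U_k)$ be bounded in $X_{G_1}$; by Step 1 and reflexivity, after passing to a subsequence $U_k\rightharpoonup U$ in $H^1(\R^3)$ and in $L_{|\Gamma|}^{p+1}(\R^3)$, hence $U_k\to U$ strongly in $L^2_{loc}(\R^3)$ by Rellich. It remains to control the tails uniformly. On $\{|x|>R\}$ repeat the estimate of Step 1 but restricted to that region: the factor $\bigl(\int_{|x|>R}|\Gamma|^{-2/(p-1)}\bigr)^{(p-1)/(p+1)}\to 0$ as $R\to\infty$ because $|\Gamma|^{-2/(p-1)}\in L^1(\R^3)$. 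Hence $\int_{|x|>R}|U_k|^2\,dx\le \varepsilon(R)\cdot(\|U_k\|_{|\Gamma|,p+1}^2+\|U_k\|_{|\Gamma|,p+1}^{p+1})$ with $\varepsilon(R)\to0$ uniformly in $k$, which together with the local strong convergence gives $U_k\to U$ in $L^2(\R^3)$. Note the symmetry subspace $G_1$ plays no essential role here beyond making $X_{G_1}$ a closed subspace on which the argument runs verbatim; the compactness comes entirely from the growth of $|\Gamma|$, so I would phrase Step 2 for the full space and restrict at the end.

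\medskip

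\textbf{Main obstacle.} The only delicate point is getting the \emph{uniform} tail decay with an explicit modulus $\varepsilon(R)$ rather than merely a pointwise-in-$k$ bound; this is exactly where the strict inequality $\alpha>\tfrac32(p-1)$ (equivalently $|\Gamma|^{-2/(p-1)}\in L^1$, with integrable tails) is used, and it must be invoked in the form ``tail of an $L^1$ function tends to $0$'' rather than just ``$L^1$''. Everything else is a routine H\"older/Rellich package, and since the statement is attributed to Benci--Fortunato \cite{be_fo2} I would keep the write-up brief, citing that reference for the details and only indicating the role of the exponent condition.
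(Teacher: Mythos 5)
Your proof is correct, and its first half coincides with the paper's: the continuous embedding is exactly the same H\"older estimate with exponents $\tfrac{p+1}{2}$ and $\tfrac{p+1}{p-1}$, using that $|\Gamma|^{-2/(p-1)}\le C(1+|x|)^{-2\alpha/(p-1)}\in L^1(\R^3)$ because $\tfrac{2\alpha}{p-1}>3$. Your preliminary splitting into $\{|U|\le 1\}$ and $\{|U|>1\}$ is superfluous -- the H\"older estimate holds on all of $\R^3$ and gives the homogeneous bound $\|U\|_2\le C\|U\|_{|\Gamma|,p+1}$ at once -- but your non-homogeneous bound still yields continuity since the inclusion map is linear. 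For compactness you take a genuinely different route: you argue directly via Rellich on balls plus a uniform tail estimate $\int_{|x|>R}|U_k|^2\,dx\le \varepsilon(R)$ times powers of $\|U_k\|_{|\Gamma|,p+1}$, with $\varepsilon(R)\to 0$ coming from the integrable tails of $|\Gamma|^{-2/(p-1)}$ (and, in the part where $|U_k|>1$, from $|\Gamma|\ge C(1+R)^\alpha$), whereas the paper introduces an auxiliary radial weight $\rho$ with $\rho(x)\to\infty$ as $|x|\to\infty$, proves $L^{p+1}_{|\Gamma|}(\R^3)\hookrightarrow L^2_\rho(\R^3)$ by the same H\"older trick, and then invokes Theorem 3.1 of Benci--Fortunato \cite{be_fo2}, which asserts that $H^1(\R^3)\cap L^2_\rho(\R^3)$ embeds compactly into $L^2(\R^3)$. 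Your version is self-contained and elementary, and it makes explicit exactly where the strict inequality $\alpha>\tfrac32(p-1)$ enters; the paper's version is shorter on the page because the concentration-at-infinity argument is outsourced to the cited weighted-embedding theorem. Your closing remark that the $G_1$-symmetry plays no role in this lemma also matches the paper, whose proof never uses it.
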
 

\begin{proof} Let $\beta= \frac{p+1}{p-1}$ and $\beta'=\frac{p+1}{2}$. Then 
\begin{align*}
\int_{\R^3} |U|^2\,dx &= \int_{\R^3} |\Gamma(x)|^{-\frac{1}{\beta'}} |\Gamma(x)|^\frac{1}{\beta'} |U|^2\,dx \\
& \leq \left(\int_{\R^3} |\Gamma(x)|^{-\frac{2}{p-1}}\,dx\right)^\frac{p-1}{p+1} \left(\int_{\R^3} |\Gamma(x)| |U|^{p+1}\,dx\right)^\frac{2}{p+1},
\end{align*}
and the first integral is finite since by assumption (H-defoc) $|\Gamma(x)|^{-\frac{2}{p-1}} \leq C (1+|x|)^{-\frac{2\alpha}{p-1}}$ and $-\frac{2\alpha}{p-1}<-3$. This proves the first part of the claim. For the second part, let us define the positive and continuous radially symmetric function $\rho:\R^3\to (0,\infty)$ by setting 
$\rho(x)=1$ for $|x|\leq 1$ and $\rho(x)=|x|^\gamma$ with $\gamma>0$ so small that $\gamma\beta - \frac{2\alpha}{p-1}<3$. Then $\rho(x)\to \infty$ as $|x|\to \infty$ and $\int_{\R^3} \rho(x)^\beta (1+|x|)^{-\frac{2\alpha}{p-1}}\,dx <\infty$. We obtain 
\begin{align*} 
\|U\|_{\rho,2}^2 &= \int_{\R^3} \rho(x)|\Gamma(x)|^{-\frac{1}{\beta'}} |\Gamma(x)|^\frac{1}{\beta'}|U|^2\,dx \\
& \leq \left(\int_{\R^3} \rho(x)^\beta |\Gamma(x)|^{-\frac{2}{p-1}}\,dx\right)^\frac{p-1}{p+1} \left(\int_{\R^3} |\Gamma(x)| |U|^{p+1}\,dx\right)^\frac{2}{p+1} \\
& \leq C \|U\|^2_{|\Gamma|,p+1}.
\end{align*}
This shows that $L_{|\Gamma|}^{p+1}(\R^3)$ embeds continuously into $L^2_{\rho}(\R^3)$. Finally, by Theorem 3.1 of Benci, Fortunato \cite{be_fo2} we have that 
$H^1(\R^3)\cap L^2_{\rho}(\R^3)$ embeds compactly into $L^2(\R^3)$. Both facts together imply the second claim of the lemma.
\end{proof}

\begin{lemma} Assume $p>1$, {\rm (H-defoc)} and $V\in L^\infty(\R^3)$. Then the functional $J$ is a weakly lower-semicontinuous, coercive $C^1$-functional on $X_{G_1}$ and hence has a minimizer.
\end{lemma}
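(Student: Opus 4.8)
The plan is to verify, one at a time, the three asserted properties of $J$ on $X_{G_1}$ and then deduce existence of a minimizer by the direct method of the calculus of variations. First I would record the convenient form of $J$: since every element of $H_{G_1}^1(\R^3)$ is divergence free, $\|\nabla\times U\|_2=\|\nabla U\|_2$ on $X_{G_1}$, and under {\rm (H-defoc)} the weight satisfies $|\Gamma(x)|\ge C(1+|x|)^\alpha>0$ a.e., so that $J[U]=\tfrac12\|\nabla\times U\|_2^2+\tfrac12\int_{\R^3}V|U|^2\,dx+\tfrac{1}{p+1}\int_{\R^3}|\Gamma|\,|U|^{p+1}\,dx$ is real-valued and finite on all of $X_{G_1}$, with the last term nonnegative.

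For $C^1$-regularity I would note that the two quadratic terms are bounded symmetric bilinear forms on $H_{G_1}^1(\R^3)$ (boundedness of the second one using $V\in L^\infty(\R^3)$), hence smooth, and treat the nonlinear term via the substitution $U=|\Gamma|^{-1/(p+1)}W$, a linear isometry of $L_{|\Gamma|}^{p+1}(\R^3)$ onto the vector-valued space $L^{p+1}(\R^3)$; on the latter, $W\mapsto\tfrac{1}{p+1}\int|W|^{p+1}$ is of class $C^1$ by the standard continuity of the Nemytskii operator $W\mapsto|W|^{p-1}W$ from $L^{p+1}(\R^3)$ to $L^{(p+1)/p}(\R^3)$ (here $p>1$ is used). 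Composing with the continuous inclusion $X_{G_1}\hookrightarrow L_{|\Gamma|}^{p+1}(\R^3)$ then gives $J\in C^1(X_{G_1})$. This step is routine.

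The substantive point is coercivity, where the sign-indefinite term $\int V|U|^2$ must be controlled; I expect this to be the main obstacle. The key inputs are $p+1>2$ and the continuous embedding $L_{|\Gamma|}^{p+1}(\R^3)\hookrightarrow L^2(\R^3)$ from \lemref{embed}, giving $\|U\|_2\le c_0\|U\|_{|\Gamma|,p+1}$. Then $\big|\int V|U|^2\big|\le\|V\|_\infty c_0^2\|U\|_{|\Gamma|,p+1}^2$, and Young's inequality absorbs this into the nonlinear term, yielding $J[U]\ge\tfrac12\|\nabla\times U\|_2^2+\tfrac{1}{2(p+1)}\|U\|_{|\Gamma|,p+1}^{p+1}-C_0$ for a constant $C_0=C_0(p,\|V\|_\infty,c_0)$; in particular $J$ is bounded below. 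Since also $\|U\|_{H^1}^2=\|\nabla\times U\|_2^2+\|U\|_2^2\le\|\nabla\times U\|_2^2+c_0^2\|U\|_{|\Gamma|,p+1}^2$, the quantity $\|U\|_X=\|U\|_{H^1}+\|U\|_{|\Gamma|,p+1}$ is dominated by $\|\nabla\times U\|_2$ and $\|U\|_{|\Gamma|,p+1}$, so $\|U\|_X\to\infty$ forces $J[U]\to\infty$.

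Finally, for weak lower semicontinuity I would take $U_k\rightharpoonup U$ in the reflexive space $X_{G_1}$; composing with the continuous inclusions, $U_k\rightharpoonup U$ in $H_{G_1}^1(\R^3)$ and in $L_{|\Gamma|}^{p+1}(\R^3)$, so the convex continuous functionals $U\mapsto\|\nabla\times U\|_2^2$ and $U\mapsto\int|\Gamma|\,|U|^{p+1}$ are weakly lower semicontinuous, while the remaining term $\int V|U|^2$ is even weakly \emph{continuous} because $X_{G_1}\hookrightarrow L^2(\R^3)$ is compact (again \lemref{embed}) and $V\in L^\infty(\R^3)$. Adding up gives $J[U]\le\liminf_k J[U_k]$. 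The proof then closes by the direct method: a minimizing sequence is bounded by coercivity, a subsequence converges weakly by reflexivity, and the weak limit attains $\inf_{X_{G_1}}J$ by weak lower semicontinuity.
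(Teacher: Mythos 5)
Your proof is correct and follows essentially the same route as the paper: coercivity via the embedding $L_{|\Gamma|}^{p+1}(\R^3)\hookrightarrow L^2(\R^3)$ from Lemma~\ref{embed} together with $p+1>2$ to absorb the $V$-term, weak lower semicontinuity by splitting $J$ into convex parts plus the weakly continuous term $\int V|U|^2$ (using the compact embedding into $L^2$), routine $C^1$-regularity, and the direct method on the reflexive space $X_{G_1}$. The only differences are cosmetic (you treat the two convex terms separately rather than as one functional, and verify $C^1$ via an isometry to $L^{p+1}$ and Nemytskii continuity instead of citing the standard reference), so nothing essential is gained or lost.
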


\begin{proof} Since 
$$
J_1[U] = \int_{\R^3} \frac{1}{2}|\nabla \times U|^2-
\frac{\Gamma(x)}{p+1}|U|^{p+1}\,dx
$$
is convex on $X_{G_1}$ and 
$$
J_2[U]= \int_{\R^3} \frac{V(x)}{2} |U|^2\,dx
$$ 
is weakly continuous on $X_{G_1}$ by Lemma~\ref{embed} we find that the functional
$J=J_1+J_2$ is weakly lower-semicontinuous on $X_{G_1}$. Moreover, there exist
constants $K_1,\ldots,K_5>0$ such that the following estimates hold for $U\in
X_{G_1}$: 
\begin{align*}
J[U] &\geq  \frac{1}{2}\|\nabla\times U\|_2^2 + \frac{1}{p+1}
\|U\|_{|\Gamma|,p+1}^{p+1}- \frac{\|V\|_\infty}{2} \|U\|_2^2 \\
& \geq  \frac{1}{2}\|\nabla\times U\|_2^2 + \frac{1}{p+1}
\|U\|_{|\Gamma|,p+1}^{p+1}- K_1\|U\|_{|\Gamma|,p+1}^2 \\
& \geq \frac{1}{2}\|\nabla\times U\|_2^2+K_2 \|U\|_{|\Gamma|,p+1}^2-K_3 \\
& \geq \frac{1}{2}\|\nabla\times U\|_2^2 + K_4 \|U\|_2^2 + \frac{K_2}{2} \|U\|_{|\Gamma|,p+1}^2-K_3
\\
& \geq K_5 \|U\|_X^2 -K_3,
\end{align*}
which shows the coercivity of $J$. It is clear that the quadratic parts of the functional $J$ are $C^1$ and it is standard (cf. Struwe~\cite{struwe}) to verify that the functional 
$\int_{\R^3} \Gamma(x)|U|^{p+1}$ has a G\^{a}teaux derivative which depends continuously on $U\in X_{G_1}$. Hence $J$ is a $C^1$-functional on $X_{G_1}$ and the minimizer of $J$ is a weak solution of \eqref{E:curl-curl-gen}. 
\end{proof}

\noindent
{\em Proof of Theorem~\ref{defoc}:} We set $U_0(x)= sW(tx)$ for some vector-valued function $W\in C_0^\infty(\R^3)$ and take $s,t>0$. Since $\esssup_{\R^3} V <0$ we obtain
\begin{align*}
J[U_0] & = \int_{\R^3} \frac{1}{2}|\nabla\times U_0(x)|^2-\frac{\Gamma(x)}{p+1}|U_0(x)|^{p+1} +\frac{V(x)}{2}|U_0(x)|^2\,dx \\
&\leq t^{-3}s^2 \int_{\R^3} \frac{t^2 }{2}|\nabla\times W(y)|^2-\frac{s^{p-1}\Gamma(y/t)}{p+1}|W(y)|^{p+1}+ \frac{\esssup_{\R^3} V}{2} |W(y)|^2\,dy \\
& <0 
\end{align*}
provided we first choose $t>0$ so small that $\int_{\R^3} t^2|\nabla\times W(y)|^2+ (\esssup_\R^3 V)|W(y)|^2\,dy<0$ and then choose $s>0$ sufficiently small. Thus the minimizer of $J$ over $X_{G_1}$ is non-trivial and therefore a ground state of
\eqref{E:curl-curl-gen} within $X_{G_1}$.
\qed

\section{Spectrum of the linear operator $\mathcal{L}$} \label{sec:spectrum}

In the focusing case we can only show the existence of ground states (cf. Section~\ref{S:foc}) when zero
does not lie in the spectrum of the linear operator 
$$
\mathcal{L} := (\nabla\times\nabla\times) + V(r,x_3).
$$
Of course an easy example is given by the class of potentials $V=V(r,x_3)$ with $\essinf V_{\R^3}>0$. However, since $V(x)$ is proportional to $-n^2(x)$ with $n(x)$ being the refractive index, the physically interesting case consists of functions $V$ which are negative (or at least have non-vanishing negative part). In this section we construct potentials $V$ with non-vanishing negative part and where $0$ lies in a spectral gap of the operator $\mathcal{L}$, cf. Lemma~\ref{l:ex1}, Lemma~\ref{l:ex2}, Lemma~\ref{l:ex3}. 

\medskip

The construction of such examples needs various preprations. We consider ${\mathcal L}$ as an operator defined on 
$$
D(\mathcal{L})= H^2_{G_1}(\R^3)\subset L^2_{G_1}(\R^3)
$$ 
and we will show in Lemma~\ref{lem:self} that $\cL$ is a selfadjoint operator, whose spectrum has a particular additive structure whenever the potential is separable, i.e., $V(r,x_3)= W(r)+P(x_3)$, cf. Lemma~\ref{lem:separabel}. The key to these results is the following observation: if 
$$
U(x) = u(r,x_3) \begin{pmatrix} -x_2 \\ x_1 \\ 0 \end{pmatrix} \mbox{ with } r=\sqrt{x_1^2+x_2^2}
$$
then 
\begin{equation}
\label{beziehung}
\cL U(x) =\Bigl((Lu)(r,x_3)\Bigr) \begin{pmatrix} -x_2 \\ x_1 \\ 0 \end{pmatrix} 
\end{equation}
with 
\begin{equation}
L = -\frac{1}{r^3}\frac{\partial}{\partial r}\left(r^3\frac{\partial}{\partial r}\right) - \frac{\partial^2}{\partial x_3^2} + V(r,x_3)
\label{skalarer_operator}
\end{equation}
where the first two terms correspond to a five-dimensional Laplacian with cylindrical symmetry. Let us now start with the detailed analysis of the operators and their spectra.

\medskip

Define the maps
$$
\Psi_{\rm{rad}}: \left\{ \begin{array}{rcl} 
\R^4 & \to & \R, \vspace{\jot}\\
(y_1,\ldots,y_4) & \mapsto & \sqrt{y_1^2+\ldots+y_4^2}\end{array}\right.
\quad 
\Psi: \left\{ \begin{array}{rcl} 
\R^5 & \to & \R^2, \vspace{\jot}\\
(y_1,\ldots,y_5) & \mapsto & (\Psi_{\rm{rad}}(y_1,\ldots,y_4), y_5)\end{array}\right.
$$
In the following we use the index \emph{rad} for spaces of functions $u:(0,\infty)\to \R$ of the single radial variable $r$; the index \emph{cyl} refers to spaces of functions $u: (0,\infty)\times \R\to \R$ of two variables $r, x_3$). For the following Hilbert spaces we also denote in brackets the measure with respect to which integration is performed.
\begin{align*}
L^2_{\rm{rad}}(r^3dr) &= \left\{u:(0,\infty) \to \R: u\circ\Psi_{\rm{rad}} \in L^2(\R^4)\right\}\\
&= \left\{ u:(0,\infty) \to \R: u \in L^2_{r^3}(0,\infty)\right\}, \\
L^2_{\rm{cyl}}(r^3drdx_3) &= \left\{u:(0,\infty)\times \R \to \R: u\circ\Psi \in L^2(\R^5)\right\}\\
&=\left\{ u:(0,\infty) \to \R: u \in L^2_{r^3}((0,\infty)\times \R)\right\}, \\
H^1_{\rm{rad}}(r^3dr) &=\left\{u:(0,\infty) \to\R: u\circ\Psi_{\rm{rad}} \in H^1(\R^4)\right\}\\
&=\left\{ u: (0,\infty)\to \R: u, u' \in  L^2_{r^3}(0,\infty)\right\}, \\
H^1_{\rm{cyl}}(r^3drdx_3) &=\left\{u:(0,\infty)\times\R\to\R: u\circ\Psi \in H^1(\R^5)\right\} \\
&= \left\{u:(0,\infty)\times\R\to\R: u, \frac{\partial u}{\partial r}, \frac{\partial u}{\partial x_3} \in L^2_{r^3}((0,\infty)\times \R)\right\}, \\
H^2_{\rm{rad}}(r^3dr) &=\left\{u:(0,\infty) \to\R: u\circ\Psi_{\rm{rad}} \in H^2(\R^4)\right\}\\
&=\left\{ u: (0,\infty)\to \R: u, u', \frac{u'}{r}, u'' \in  L^2_{r^3}(0,\infty)\right\}, \\
H^2_{\rm{cyl}}(r^3drdx_3) &=\left\{u:(0,\infty)\times\R\to\R: u\circ\Psi \in H^2(\R^5)\right\} \\
&= \left\{u:(0,\infty)\times\R\to\R: u, \frac{\partial u}{\partial r}, \frac{\partial u}{\partial x_3}, \frac{1}{r}\frac{\partial u}{\partial r}, \frac{\partial^2 u}{\partial r^2}, \frac{\partial^2 u}{\partial x_3^2} \in L^2_{r^3}((0,\infty)\times \R)\right\}.
\end{align*}
These identities may be well known. For the sake of clarity we explain the last one for $H^2_{\rm{cyl}}(r^3drdx_3)$ on the level of the derivatives of highest order: $u\circ\Psi$ has second order derivatives in $L^2(\R^5)$ if and only if for all $i,j\in \{1,2,3,4\}$ we have 
\begin{equation}
\left(\frac{\partial^2 u}{\partial r^2}-\frac{1}{r}\frac{\partial u}{\partial r}\right)\frac{y_i y_j}{r^2}+\frac{\partial u}{\partial r}\frac{\delta_{ij}}{r}, \frac{\partial^2 u}{\partial x_3^2}, \frac{\partial^2 u}{\partial r\partial x_3} \frac{y_i}{r} \in L^2(\R^5).
\label{second_derivatives}
\end{equation}
In view of the fact that $\int_{\R^3} \sum_{i,j = 1}^3 \left(\frac{\partial^2 v(x)}{\partial x_i \partial x_j}\right)^2\,dx = \int_{\R^3} \left(\Delta v(x)\right)^2\,dx$ for $v\in C_0^\infty(\R^3)$ we see that
$$
u\circ\Psi, \Delta (u\circ \Psi)\in L^2(\R^5) \Longleftrightarrow u\circ \Psi\in H^2(\R^5)
$$
Hence, for \eqref{second_derivatives} it is sufficient to have 
\begin{equation}
u,\; \frac{1}{r}\frac{\partial u}{\partial r},\; \frac{\partial^2 u}{\partial r^2},\; \frac{\partial^2 u}{\partial x_3^2} \in L^2_{r^3}((0,\infty)\times \R)
\label{sufficient_second_order}
\end{equation}
since the $L^2$-norm of $\frac{\partial^2 u}{\partial r \partial x_3}$ may be estimated by the $L^2$-norm of $\Delta (u\circ \Psi)$, i.e., by sums of $L^2_{r^3}((0,\infty)\times \R)$-norms of $\frac{1}{r}\frac{\partial u}{\partial r}$, $\frac{\partial^2 u}{\partial r^2}$, $\frac{\partial^2 u}{\partial x_3^2}$. 
Let us also show that \eqref{sufficient_second_order} is necessary. If we square all the entries in \eqref{second_derivatives} and add them up then we see that 
$$
\int_0^\infty \int_{-\infty}^\infty \left(\left(\frac{\partial^2 u}{\partial r^2}\right)^2+\frac{3}{r^2} \left(\frac{\partial u}{\partial r}\right)^2+ \left(\frac{\partial^2 u}{\partial x_3^2}\right)^2+ \left(\frac{\partial^2 u}{\partial r \partial x_3}\right)^2\right) r^3\,drdx_3 < \infty,
$$
which implies that \eqref{sufficient_second_order} is also necessary. 

\medskip

The spaces carry natural inner products. Instead of listing all of them we just write out the ones for $H^2_{\rm{rad}}(r^3dr)$ and $H^2_{\rm{cyl}}(r^3drdx_3)$: 
\begin{align*}
\langle u,v \rangle_{H^2_{\rm{rad}}} &= \int_0^\infty \left(uv+ u'v'+\frac{1}{r^2} u'v'+u''v''\right) r^3dr \\
\langle u,v \rangle_{H^2_{\rm{cyl}}} &= \int_0^\infty \int_{-\infty}^\infty \left(uv+ \frac{\partial u}{\partial r}\frac{\partial v}{\partial r}+ \frac{\partial u}{\partial x_3}\frac{\partial v}{\partial x_3}
+\frac{1}{r^2}\frac{\partial u}{\partial r}\frac{\partial v}{\partial r}
+\frac{\partial^2 u}{\partial r^2}\frac{\partial^2 v}{\partial r^2}+ \frac{\partial^2 u}{\partial x_3^2}\frac{\partial^2 v}{\partial x_3^2}\right)r^3drdx_3.
\end{align*}

The above identities of spaces have the following implication.

\begin{lemma}[Hardy's inequality]\
\begin{itemize}
\item[(i)] There exists a constant $C>0$ such that for all $u\in H^1_{\rm{cyl}}(r^3drdx_3)$
$$
\int_0^\infty \int_{-\infty}^\infty \frac{u^2}{r^2} \,r^3drdx_3 \leq \int_0^\infty \int_{-\infty}^\infty \left(\left(\frac{\partial u}{\partial r}\right)^2+ \left(\frac{\partial u}{\partial x_3}\right)^2\right) \, r^3drdx_3.
$$
\item[(ii)] There exists a constant $C>0$ such that for all $u\in H^2_{\rm{cyl}}(r^3drdx_3)$
$$
\int_0^\infty \int_{-\infty}^\infty \frac{1}{r^2}\left(\frac{\partial u}{\partial x_3}\right)^2 \,r^3drdx_3 \leq \int_0^\infty \int_{-\infty}^\infty \left(\left(\frac{\partial^2 u}{\partial r^2}\right)^2+ \frac{1}{r^2}\left(\frac{\partial u}{\partial r}\right)^2+ \left(\frac{\partial^2 u}{\partial x_3^2}\right)^2\right) \, r^3drdx_3.
$$
\end{itemize}
\label{hardy}
\end{lemma}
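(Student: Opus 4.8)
The plan is to reduce both inequalities, via the identifications of the weighted spaces $H^k_{\rm{rad}}$, $H^k_{\rm{cyl}}$ with ordinary Sobolev spaces established just above, to the classical Hardy inequality on $\R^4$. This also explains why the displayed estimates carry no explicit constant: the sharp constant in $\int_{\R^4}|z|^{-2}|\phi|^2\,dz\le\bigl(\tfrac{2}{4-2}\bigr)^2\int_{\R^4}|\nabla\phi|^2\,dz$ equals $1$. To keep the argument free of circular reasoning about finiteness of weighted integrals and the vanishing of boundary terms, I would establish the one–variable estimates and the integration–by–parts identities below first for radial $C_0^\infty(\R^4)$– (resp.\ $C_0^\infty(\R^5)$–) functions, where everything is finite, and then pass to the limit by density; all the quadratic functionals occurring below are continuous on the spaces involved.

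For (i), the point is that for $u\in H^1_{\rm{cyl}}(r^3drdx_3)$ Fubini's theorem shows that for a.a.\ $x_3$ the slice $r\mapsto u(r,x_3)$ lies in $H^1_{\rm{rad}}(r^3dr)$, i.e.\ $z\mapsto u(|z|,x_3)$ lies in $H^1(\R^4)$. Writing the $\R^4$ Hardy inequality in polar coordinates then gives
$$
\int_0^\infty \frac{u(r,x_3)^2}{r^2}\,r^3dr\ \le\ \int_0^\infty \bigl(\partial_r u(r,x_3)\bigr)^2\,r^3dr ,
$$
and integrating in $x_3$, after adding the nonnegative term $\int_0^\infty\int_{-\infty}^\infty(\partial_{x_3}u)^2\,r^3drdx_3$ to the right-hand side, yields the claim.

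For (ii) I would argue in two steps, writing $F=\int_0^\infty\int_{-\infty}^\infty(\partial_r^2u)^2r^3drdx_3$, $G=\int_0^\infty\int_{-\infty}^\infty r^{-2}(\partial_ru)^2r^3drdx_3$, $E=\int_0^\infty\int_{-\infty}^\infty(\partial_{x_3}^2u)^2r^3drdx_3$. \emph{Step 1:} recall from the discussion preceding the lemma that every $u\in H^2_{\rm{cyl}}(r^3drdx_3)$ satisfies $\partial_r\partial_{x_3}u\in L^2_{r^3}$, so $\partial_{x_3}u\in H^1_{\rm{cyl}}(r^3drdx_3)$ and, for a.a.\ $x_3$, $\partial_{x_3}u(\cdot,x_3)\in H^1_{\rm{rad}}(r^3dr)$; applying the one-variable inequality from the proof of (i) with $u(\cdot,x_3)$ replaced by $\partial_{x_3}u(\cdot,x_3)$ and integrating in $x_3$ gives
$$
B:=\int_0^\infty\int_{-\infty}^\infty\frac{(\partial_{x_3}u)^2}{r^2}\,r^3drdx_3\ \le\ A:=\int_0^\infty\int_{-\infty}^\infty(\partial_r\partial_{x_3}u)^2\,r^3drdx_3 .
$$
\emph{Step 2:} one integration by parts in $x_3$ followed by one in $r$ turns $A$ into $\int_0^\infty\int_{-\infty}^\infty\bigl(\partial_r^2u+\tfrac3r\partial_ru\bigr)\partial_{x_3}^2u\,r^3drdx_3$, while a further integration by parts in $r$ gives $\int_0^\infty\int_{-\infty}^\infty\bigl(\partial_r^2u+\tfrac3r\partial_ru\bigr)^2r^3drdx_3=F+3G$ (the cross term equals $6\int\int\partial_r^2u\,\partial_ru\,r^2drdx_3=-6G$). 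Young's inequality with weight $\tfrac23$ then yields
$$
A\ \le\ \tfrac13\bigl(F+3G\bigr)+\tfrac34E\ =\ \tfrac13F+G+\tfrac34E\ \le\ F+G+E ,
$$
and combining this with Step 1 proves (ii).

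The one genuinely non-routine step is the second step of (ii): after passing to $\partial_{x_3}u$ one is left with the mixed derivative $\partial_r\partial_{x_3}u$, which does not appear on the right-hand side of the claim. What makes the estimate close is that, after the integrations by parts, $\partial_r\partial_{x_3}u$ is paired against $\partial_r^2u+\tfrac3r\partial_ru$ (this is $\Delta(u\circ\Psi)$ in the five-variable picture) and against $\partial_{x_3}^2u$, and the identity $\|\partial_r^2u+\tfrac3r\partial_ru\|_{L^2_{r^3}}^2=F+3G$ is favourable enough for Young's inequality to absorb everything into $F+G+E$ with constant $1$. I expect the density and boundary-term verifications to be the only slightly tedious part of the argument.
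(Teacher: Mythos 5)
Your argument is correct, but it takes a genuinely different route from the paper's. The paper proves (i) by citing a ready-made Hardy inequality in $\R^5$ for the distance to the codimension-four set $K=\{y\in\R^5: y_1=\dots=y_4=0\}$ (Theorem~C of Barbatis--Filippas--Tertikas), together with the capacity fact $H_0^1(\R^5\setminus K)=H_0^1(\R^5)$, and then disposes of (ii) in one line by applying (i) to $\partial U/\partial y_5\in H^1(\R^5)$; that one line leaves the mixed derivative $\partial_r\partial_{x_3}u$ on the right-hand side, which is then controlled via the observation preceding the lemma that $\|\partial_r\partial_{x_3}u\|_{L^2_{r^3}}$ is bounded through $\|\Delta(u\circ\Psi)\|_{L^2(\R^5)}$ by the norms of $\tfrac1r\partial_r u$, $\partial_r^2u$, $\partial_{x_3}^2u$ --- so the paper's route yields (ii) with some constant $C$, in line with the preamble of the statement. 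You instead slice in $x_3$ and use the classical radial Hardy inequality on $\R^4$ with its sharp constant $1$, which makes (i) self-contained and bypasses both the citation and the capacity argument; and in (ii), after the same reduction to $\partial_{x_3}u$, you treat the mixed-derivative term explicitly: two integrations by parts convert $\int\int(\partial_r\partial_{x_3}u)^2r^3\,dr\,dx_3$ into the pairing of $\partial_r^2u+\tfrac3r\partial_ru$ with $\partial_{x_3}^2u$, your identity $\|\partial_r^2u+\tfrac3r\partial_ru\|_{L^2_{r^3}}^2=F+3G$ checks out (the cross term indeed integrates to $-6G$), and Young's inequality with weight $3/2$ lands exactly on $F+G+E$, i.e.\ the inequality as displayed with constant $1$. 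What your route buys is an elementary, self-contained proof with the explicit constant; what the paper's route buys is brevity. The remaining ingredients you invoke --- slicewise $H^1(\R^4)$ membership via Fubini, $\partial_{x_3}u\in H^1_{\rm cyl}(r^3dr\,dx_3)$ because $\partial_{y_5}(u\circ\Psi)\in H^1(\R^5)$, vanishing of the $r=0$ boundary terms thanks to the weight $r^3$, and density of smooth cylindrically symmetric functions by mollification plus averaging over rotations of the first four variables --- are standard, so I see no gap.
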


\begin{proof} We use the identities of spaces as explained above. Part (i) can be found as Theorem~C in \cite{barbatis_et_al} set up in $\R^5$ where $r=\dist(y,K)$ and $K=\{y\in \R^5: y_1=y_2=y_3=y_4=0\}$. Note that $H_0^1(\R^5\setminus K)= H_0^1(\R^5)$, cf. Theorem~2.43 in \cite{martio_et_al}, because as a subset of $\R^5$ the set $K$ has zero $2$-capacity, cf. Section~4.7.2 in \cite{evans_gariepy}. Part (ii) is a consequence of (i) when applied to $\partial U/\partial y_5 \in H_0^1(\R^5)$.
\end{proof}

\begin{lemma} \label{spaces} The following identity holds between the group invariant spaces (denoted with suffix $G_1$) and the spaces of scalar functions with cylindrical symmetry (denoted with suffix {\rm cyl}):
\begin{align*}
L^2_{G_1}(\R^3) &= \left\{ u(r,x_3) \begin{pmatrix} -x_2 \\ x_1 \\ 0 \end{pmatrix}: u \in L^2_{\rm{cyl}}(r^3drdx_3)\right\} \\
H^k_{G_1}(\R^3) &= \left\{ u(r,x_3) \begin{pmatrix} -x_2 \\ x_1 \\ 0 \end{pmatrix}: u \in H^k_{\rm{cyl}}(r^3drdx_3)\right\}, \quad k=1,2.
\end{align*}
\end{lemma}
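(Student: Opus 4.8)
The plan is to prove \lemref{spaces} by combining the pointwise decomposition of $G_1$-invariant vector fields from \lemref{symmetry_zyl} and \lemref{def_g1} with the explicit identifications of the cylindrical Sobolev spaces (via $\Psi$, $\Psi_{\rm rad}$) that were established just above the statement. The strategy is: (1) show that any $U\in L^2_{G_1}(\R^3)$ (resp. $H^k_{G_1}(\R^3)$) necessarily has the special form $U(x)=u(r,x_3)(-x_2,x_1,0)^T$; (2) compute the relevant integral norms of such a $U$ over $\R^3$ in terms of weighted integrals of $u$ and its derivatives over $(0,\infty)\times\R$ with respect to $r^3\,dr\,dx_3$; (3) match these against the definitions of $L^2_{\rm cyl}(r^3drdx_3)$ and $H^k_{\rm cyl}(r^3drdx_3)$ to get the claimed equality of sets.

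For step (1): a function in $L^2_{G_1}$ or $H^k_{G_1}$ is by definition $G_0$-invariant and fixed by $g_1$, so by \lemref{symmetry_zyl} it decomposes as $U=Q+S+T$ with $Q,S,T$ as in \eqref{decomp}, and the condition $g_1U=U$, i.e. $Q-S-T=Q+S+T$, forces $S=T=0$. Hence $U=Q$ has the stated form with $u=q$, and conversely every field of that form is $G_0$-invariant and $g_1$-fixed, so it lies in the appropriate $G_1$-space as soon as it has the right integrability/differentiability. For step (2), the key is the elementary pointwise identities: writing $e(x)=(-x_2,x_1,0)^T$ one has $|e(x)|^2=r^2$, so $|U(x)|^2=u(r,x_3)^2 r^2$; converting the $\R^3$-integral to the coordinates $(r,x_3)$ (with the $2\pi r$ Jacobian from the angular integration) gives $\int_{\R^3}|U|^2\,dx = 2\pi\int_0^\infty\int_{-\infty}^\infty u^2\, r^3\,dr\,dx_3$, which is exactly the $L^2_{\rm cyl}(r^3drdx_3)$-norm up to the constant $2\pi$. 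This proves the $k=0$ (i.e. $L^2$) statement. For $k=1,2$ one differentiates $U=u(r,x_3)e(x)$, expressing $\partial_{x_1},\partial_{x_2},\partial_{x_3}$ of $U$ in terms of $\partial_r u$, $\partial_{x_3}u$, and the algebraic derivatives of $e(x)$; collecting terms and integrating, the first-order derivatives of $U$ in $L^2(\R^3)$ correspond precisely to $u,\partial_r u,\partial_{x_3}u,\frac1r\partial_r u\in L^2_{r^3}$, and the second-order derivatives to the list $u,\partial_r u,\partial_{x_3}u,\frac1r\partial_r u,\partial_r^2 u,\partial_{x_3}^2u,\partial_r\partial_{x_3}u$ in $L^2_{r^3}$. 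Here one invokes the already-proven identity $H^k_{\rm cyl}(r^3drdx_3)=\{u:u\circ\Psi\in H^k(\R^5)\}$ together with the discussion around \eqref{sufficient_second_order} showing the mixed derivative $\partial_r\partial_{x_3}u$ and the lower-order terms like $\frac1r\partial_{x_3}u$ are controlled (using \lemref{hardy}), so that the finitely many conditions coming from $U\in H^k(\R^3)$ are equivalent to $u\in H^k_{\rm cyl}(r^3drdx_3)$.

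Concretely, for the first-order case one checks that $\nabla U$ has entries that are linear combinations of $u/r\cdot(\text{bounded})$, $\partial_r u\cdot(\text{bounded})$ and $\partial_{x_3}u\cdot(\text{bounded})$, with the "$u/r$" contribution being exactly the term that, after weighting by $r^3$, reproduces $\int u^2 r\,dr\,dx_3$ — finite by Hardy's inequality \lemref{hardy}(i) once $\partial_r u,\partial_{x_3}u\in L^2_{r^3}$, so it imposes no extra condition; thus $U\in H^1(\R^3)\Leftrightarrow u,\partial_r u,\partial_{x_3}u\in L^2_{r^3}$, matching the definition of $H^1_{\rm cyl}$. For the second-order case the same bookkeeping, now using \lemref{hardy}(ii) to absorb the $\frac1r\partial_{x_3}u$ term, reduces $U\in H^2(\R^3)$ to the condition list defining $H^2_{\rm cyl}(r^3drdx_3)$. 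One should also note $H^1_{G_1}(\curl;\R^3)=H^1_{G_1}(\R^3)$ has already been recorded in \eqref{identity_G1}, so nothing new about the $\curl$ is needed here.

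The main obstacle is the bookkeeping in step (2) for $k=2$: one has to carefully expand all second partial derivatives of $u(r,x_3)e(x)$ in Cartesian coordinates, square and sum, convert to $(r,x_3)$-coordinates, and verify that the resulting weighted integrals are finite \emph{if and only if} each of the six/seven listed quantities lies in $L^2_{r^3}$. The subtlety is that individual Cartesian second derivatives mix several of the radial quantities (e.g. terms like $(\partial_r^2 u-\frac1r\partial_r u)\frac{y_iy_j}{r^2}+\frac1r\partial_r u\,\frac{\delta_{ij}}{r}$, exactly as in \eqref{second_derivatives}), so the equivalence is not entry-by-entry; one must argue, as in the paragraph preceding the lemma, that controlling the five-dimensional Laplacian $\Delta(u\circ\Psi)$ (equivalently the full Hessian) is equivalent to controlling the sufficient list \eqref{sufficient_second_order}, and then that the remaining lower-order terms ($\frac1r\partial_r u$ appearing from the vector structure, $\frac1r\partial_{x_3}u$, $u/r$) are dominated via \lemref{hardy}. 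Once this equivalence is in hand the proof is a short assembly; I would present it by reducing everything to the already-established identities $H^k_{\rm cyl}(r^3drdx_3)=\{u:u\circ\Psi\in H^k(\R^5)\}$ and the orthogonal pointwise decomposition, keeping explicit computation to a minimum.
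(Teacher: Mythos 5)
Your proposal is correct and follows essentially the same route as the paper: reduce to fields of the form $u(r,x_3)(-x_2,x_1,0)^T$, compute the Cartesian $L^2$-norms of $U$ and its first and second derivatives in cylindrical coordinates, and use the identifications $H^k_{\rm{cyl}}(r^3drdx_3)=\{u:u\circ\Psi\in H^k(\R^5)\}$ together with Hardy's inequality (Lemma~\ref{hardy}) to absorb the lower-order terms $u/r$ and $\frac1r\partial_{x_3}u$, exactly as in the paper's proof (which treats $U\in H^k$ via $x_iu\in H^k$, $i=1,2$). The only blemish is the listing of $\frac1r\partial_r u$ among the first-order conditions in your middle paragraph, which should be $u/r$ as your ``concretely'' paragraph then correctly states, so this does not affect the argument.
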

\begin{proof} Let 
$$
U(x_1,x_2,x_3) = u(r,x_3) \begin{pmatrix} -x_2 \\ x_1 \\ 0 \end{pmatrix}
$$
The identity of the $L^2$-spaces is obvious since 
$$
\int_{\R^3}|U(x)|^2\,dx= 2\pi\int_0^\infty \int_{-\infty}^\infty|u(r,x_3)|^2 r^3 drdx_3.
$$
Next we prove the identity of the $H^1$-spaces. Clearly $U\in H^1(\R^3)$ if and only if $x_iu \in H^1(\R^3)$ for $i=1,2$, i.e., if and only if 
$$
u\delta_{ij}+ \frac{x_ix_j}{r} \frac{\partial u}{\partial r},\; x_i \frac{\partial u}{\partial x_3}, \; x_iu \in L^2(\R^3).
$$
By squaring, summing from $i,j=1,2$ and rearranging terms this in turn is equivalent to 
$$
\int_0^\infty \int_{-\infty}^\infty \left( u^2+\left(u+r\frac{\partial u}{\partial r}\right)^2 + r^2\left(\frac{\partial u}{\partial x_3}\right)^2+ r^2 u^2\right) r\,drdx_3 < \infty.
$$
The above is equivalent to $ \frac{u}{r}, \frac{\partial u}{\partial r}, \frac{\partial u}{\partial x_3}, u \in L^2_{r^3}((0,\infty)\times \R)$. Hardy's inequality of Lemma~\ref{hardy} tells us that the $L^2_{r^3}((0,\infty)\times \R)$-norm of the first term is bounded by the norm of the remaining terms, and hence $U\in H^1(\R^3)$ if and only if $u\in H^1_{\rm{cyl}}(r^3drdx_3)$. Finally, let us prove the identity for the $H^2$-spaces. The second derivatives of $U$ lie in $H^2(\R^3)$ if and only if 
$$
\delta_{ij} \frac{x_k}{r} \frac{\partial u}{\partial r} + \delta_{ik} \frac{x_j}{r} \frac{\partial u}{\partial r}+\delta_{jk} \frac{x_i}{r}\frac{\partial u}{\partial r}- \frac{x_ix_jx_k}{r^3}\frac{\partial u}{\partial r} + \frac{x_ix_jx_k}{r^2}\frac{\partial^2 u}{\partial r^2}, \; \delta_{ij} \frac{\partial u}{\partial x_3} + \frac{x_ix_j}{r} \frac{\partial^2 u}{\partial r\partial x_3} \in L^2(\R^3)
$$
for $i,j,k=1,2$. By squaring, summing from $i,j,k=1,2$ and rearranging this becomes 
\begin{align*}
\int_0^\infty \int_{-\infty}^\infty \left(3\left(\frac{\partial u}{\partial r}\right)^2 + \left(2\frac{\partial u}{\partial r}+r\frac{\partial^2 u}{\partial r^2}\right)^2 \right) r\,drdx_3 &< \infty \\
\int_0^\infty \int_{-\infty}^\infty\left(\left(\frac{\partial u}{\partial x_3}\right)^2+ \left(\frac{\partial u}{\partial x_3}+r\frac{\partial^2 u}{\partial r\partial x_3}\right)^2\right)r\,drdx_3 &< \infty \\
\int_0^\infty \int_{-\infty}^\infty \left(\frac{\partial^2 u}{\partial x_3^2}\right)^2 r^3 \,drdx_3 &< \infty
\end{align*}
Therefore a necessary and sufficient condition for $U \in H^2(\R^3)$ is given by
$$
\int_0^\infty \int_{-\infty}^\infty \left(\frac{1}{r^2}\left(\frac{\partial u}{\partial r}\right)^2+ \left(\frac{\partial^2 u}{\partial r^2}\right)^2+\frac{1}{r^2}\left(\frac{\partial u}{\partial x_3}\right)^2 +\left(\frac{\partial^2 u}{\partial r\partial x_3}\right)^2+\left(\frac{\partial^2 u}{\partial x_3^2}\right)^2 \right)r^3\,drdx_3 <\infty
$$
By the relation $\|D^2 u\|_{L^2(\R^3)}= \|\Delta u\|_{L^2(\R^3)}$ and by Hardy's inequality of Lemma~\ref{hardy} the above is equivalent to 
$$
\int_0^\infty \int_{-\infty}^\infty \left(\frac{1}{r^2}\left(\frac{\partial u}{\partial r}\right)^2+ \left(\frac{\partial^2 u}{\partial r^2}\right)^2+\left(\frac{\partial^2 u}{\partial x_3^2}\right)^2 \right)r^3\,drdx_3 <\infty
$$
which means that $U$ has second derivatives in $L^2(\R^3)$ if and only if $\frac{1}{r}\frac{\partial u}{\partial r}, \frac{\partial^2 u}{\partial r^2}, \frac{\partial^2 u}{\partial x_3^2} \in L^2_{r^3}((0,\infty)\times \R)$. In view of the definition of $H^2_{\rm{cyl}}(r^3drdx_3)$ this establishes the claim.
\end{proof}

\begin{lemma} \label{l:self_l} Let $V\in L^\infty(\R^3)$ and suppose $V=V(r,x_3)$ has cylindrical symmetry. Then the operator $L: D(L):=H^2_{\rm{cyl}}(r^3drdx_3) \subset L^2_{\rm{cyl}}(r^3drdx_3)\to L^2_{\rm{cyl}}(r^3drdx_3)$ given by \eqref{skalarer_operator} is selfadjoint.
\end{lemma}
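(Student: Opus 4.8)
The plan is to transfer the self-adjointness question from the operator $L$ on $L^2_{\rm{cyl}}(r^3drdx_3)$ to the self-adjointness of an associated operator on $L^2(\R^5)$, where classical results apply. Concretely, using the map $\Psi:\R^5\to\R^2$ introduced above, every $u\in L^2_{\rm{cyl}}(r^3drdx_3)$ corresponds to a cylindrically symmetric function $\tilde u = u\circ\Psi \in L^2(\R^5)$, and (up to a fixed multiplicative constant coming from the surface measure of $S^3$) this correspondence is a unitary isomorphism. Under this isomorphism the first two terms of $L$ in \eqref{skalarer_operator}, namely $-r^{-3}\partial_r(r^3\partial_r) - \partial_{x_3}^2$, become exactly the five-dimensional Laplacian $-\Delta_{\R^5}$ acting on cylindrically symmetric functions (with $r = \dist(y,K)$, $K=\{y_1=y_2=y_3=y_4=0\}$ as in the proof of Lemma~\ref{hardy}), and the potential term $V(r,x_3)$ becomes multiplication by the bounded function $\tilde V = V\circ\Psi\in L^\infty(\R^5)$. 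Moreover, by the space identifications recorded before Lemma~\ref{hardy}, $D(L)=H^2_{\rm{cyl}}(r^3drdx_3)$ corresponds precisely to the cylindrically symmetric functions in $H^2(\R^5)$.

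First I would make the unitary equivalence precise: define the cylindrically symmetric subspace $L^2_{\rm{cyl}}(\R^5)\subset L^2(\R^5)$ (functions invariant under rotations in the first four coordinates), observe that $-\Delta_{\R^5}$ with domain $H^2(\R^5)$ is self-adjoint and commutes with this rotation group, hence restricts to a self-adjoint operator on $L^2_{\rm{cyl}}(\R^5)$ with domain $H^2(\R^5)\cap L^2_{\rm{cyl}}(\R^5)$; here one uses that the orthogonal projection onto $L^2_{\rm{cyl}}(\R^5)$ commutes with the resolvent of $-\Delta_{\R^5}$, which is immediate since $-\Delta_{\R^5}$ is rotation-invariant. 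Next I would invoke the Kato–Rellich theorem: since $\tilde V\in L^\infty(\R^5)$, multiplication by $\tilde V$ is bounded and symmetric, hence relatively bounded with relative bound $0$ with respect to $-\Delta_{\R^5}$, so $-\Delta_{\R^5}+\tilde V$ is self-adjoint on the same domain $H^2(\R^5)$; restricting again to the cylindrically symmetric subspace (which $\tilde V$ also preserves, since $\tilde V$ is cylindrically symmetric) yields a self-adjoint operator on $L^2_{\rm{cyl}}(\R^5)$ with domain $H^2(\R^5)\cap L^2_{\rm{cyl}}(\R^5)$. Finally I would conjugate back through the unitary map to conclude that $L$ with domain $H^2_{\rm{cyl}}(r^3drdx_3)$ is self-adjoint.

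The one point requiring care — and the main obstacle — is the identification $D(L) = H^2_{\rm{cyl}}(r^3drdx_3) \cong H^2(\R^5)\cap L^2_{\rm{cyl}}(\R^5)$, i.e., that the maximal domain on which $-r^{-3}\partial_r(r^3\partial_r)-\partial_{x_3}^2$ maps into $L^2_{\rm{cyl}}(r^3drdx_3)$ is exactly the Sobolev space $H^2_{\rm{cyl}}(r^3drdx_3)$ and no boundary conditions at $r=0$ are hidden. This is precisely where the preparatory discussion around \eqref{second_derivatives}–\eqref{sufficient_second_order} and the relation $\|D^2 v\|_{L^2(\R^3)}=\|\Delta v\|_{L^2(\R^3)}$ (here its $\R^5$ analogue) is used: it shows $u\circ\Psi, \Delta_{\R^5}(u\circ\Psi)\in L^2(\R^5)$ if and only if $u\circ\Psi\in H^2(\R^5)$, and the fact that $K$ has zero $2$-capacity in $\R^5$ (quoted in the proof of Lemma~\ref{hardy}) guarantees that $H^2(\R^5\setminus K)=H^2(\R^5)$, so the axis $r=0$ carries no extra constraint. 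Everything else is a routine application of the Kato–Rellich theorem together with the already-established space identifications, so the argument reduces to assembling these ingredients.
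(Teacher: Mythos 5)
Your proposal is correct and follows essentially the same route as the paper: the paper's proof consists precisely of the observation $(Lu)\circ\Psi=-\Delta(u\circ\Psi)+(Vu)\circ\Psi$, identifying $L$ with the five-dimensional Schr\"odinger operator $-\Delta+V$ on the cylindrically symmetric subspace, with the domain identification $H^2_{\rm{cyl}}(r^3drdx_3)\cong H^2(\R^5)\cap L^2_{\rm{cyl}}(\R^5)$ already established in the preceding space identities. Your additional details (bounded perturbation/Kato--Rellich and the commuting projection onto the rotation-invariant subspace) are exactly the standard facts the paper leaves implicit.
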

\begin{proof} For $u\in D(L)$ we have 
$$
(Lu)\circ \Psi = -\Delta (u\circ\Psi) + (Vu)\circ\Psi,
$$
i.e., $L$ coincides with the five-dimensional Schr\"odinger operator $-\Delta+V$ in the space of functions with cylindrical symmetry.
\end{proof}

\begin{lemma} \label{lem:self}
Let $V\in L^\infty(\R^3)$ and suppose $V=V(r,x_3)$ has cylindrical symmetry. The operator $\mathcal{L} := (\nabla\times\nabla\times) + V(r,x_3)$ defined on $D(\mathcal{L})= H^2_{G_1}(\R^3)\subset L^2_{G_1}(\R^3)\to L^2_{G_1}(\R^3)$ is selfadjoint and $\sigma(\cL)=\sigma(L)$.
\end{lemma}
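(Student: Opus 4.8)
The plan is to realize $\cL$ as unitarily equivalent to the scalar operator $L$ of Lemma~\ref{l:self_l}, so that both selfadjointness and the spectral identity are inherited for free. Define the linear map
\[
\Phi: L^2_{\rm{cyl}}(r^3drdx_3)\to L^2_{G_1}(\R^3),\qquad (\Phi u)(x) = u(r,x_3)\begin{pmatrix}-x_2\\ x_1\\ 0\end{pmatrix}.
\]
By Lemma~\ref{spaces} the map $\Phi$ is a bijection, and it restricts to a bijection of $D(L)=H^2_{\rm{cyl}}(r^3drdx_3)$ onto $D(\cL)=H^2_{G_1}(\R^3)$; in particular $\Phi$ indeed lands in $L^2_{G_1}(\R^3)$ because functions of this form are $G_1$-invariant. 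Since $\int_{\R^3}|\Phi u|^2\,dx = 2\pi\int_0^\infty\int_{-\infty}^\infty u^2\,r^3drdx_3$ (as already recorded in the proof of Lemma~\ref{spaces}), the rescaled map $T:=(2\pi)^{-1/2}\Phi$ is unitary from $L^2_{\rm{cyl}}(r^3drdx_3)$ onto $L^2_{G_1}(\R^3)$.

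Next I would show the intertwining identity $\cL\,\Phi u = \Phi\,(Lu)$ for every $u\in D(L)$. For smooth, compactly supported cylindrically symmetric $u$ this is precisely the computation underlying relation~\eqref{beziehung}. For general $u\in H^2_{\rm{cyl}}(r^3drdx_3)$ it then follows by approximating $u$ in the $H^2_{\rm{cyl}}$-norm by such smooth functions and passing to the limit: the left-hand side is continuous in this topology because $\nabla\times\nabla\times$ is bounded from $H^2$ into $L^2$ and $V\in L^\infty$, while the right-hand side is continuous because $L$ is bounded from $H^2_{\rm{cyl}}$ into $L^2_{\rm{cyl}}$ (Lemma~\ref{l:self_l}), and Lemma~\ref{spaces} guarantees that approximants and limit all stay of the form $\Phi(\cdot)$. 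Hence, at the level of operators, $\cL = T L T^{-1}$ with matching domains $D(\cL)=T(D(L))$.

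Since $L$ is selfadjoint by Lemma~\ref{l:self_l} and unitary conjugation preserves both selfadjointness and the spectrum, we conclude that $\cL$ is selfadjoint with $\sigma(\cL)=\sigma(L)$. The only genuinely delicate point is the verification that relation~\eqref{beziehung} persists, in the distributional sense, for all $u\in H^2_{\rm{cyl}}(r^3drdx_3)$ and not merely for smooth $u$; this is where one needs the density of smooth cylindrically symmetric functions in $H^2_{\rm{cyl}}(r^3drdx_3)$ (equivalently, that $H^2_{G_1}$ is the closure of the restrictions of $C_0^\infty(\R^3)$-vector fields of the form~\eqref{symmetric_form}) together with the continuity of $\nabla\times\nabla\times\,+\,V$ and of $L$ on the respective $H^2$-spaces. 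Everything else is routine bookkeeping with the unitary $T$.
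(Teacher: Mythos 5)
Your argument is correct, and it rests on exactly the same three ingredients as the paper's proof: the identification of $L^2_{G_1}$ and $H^2_{G_1}$ with the scalar cylindrical spaces (Lemma~\ref{spaces}), the intertwining relation \eqref{beziehung}, and the selfadjointness of $L$ (Lemma~\ref{l:self_l}). The difference is in how the conclusion is extracted. The paper does not phrase the correspondence as a unitary map: it first checks symmetry of $\cL$ by the inner-product computation \eqref{eq_quad_form}, and then proves selfadjointness via the criterion ``symmetric plus $\cL-\mu\Id$ onto for some real $\mu$'' (citing Edmunds--Evans), choosing $\mu$ in the resolvent set of $L$ and solving $Lu-\mu u=f$ scalarly to produce the preimage; the equality $\sigma(\cL)=\sigma(L)$ is then read off from the identification of the resolvent sets. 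You instead promote the correspondence of Lemma~\ref{spaces} to a unitary $T=(2\pi)^{-1/2}\Phi$ with $T(D(L))=D(\cL)$ and write $\cL=TLT^{-1}$, so that selfadjointness and the spectral identity follow simultaneously from invariance under unitary conjugation; this is a cleaner package and avoids the abstract surjectivity criterion. The point you flag as delicate --- validity of \eqref{beziehung} for every $u\in H^2_{\rm{cyl}}(r^3drdx_3)$, obtained by density of smooth cylindrically symmetric functions and $H^2\to L^2$ continuity of both operators --- is not an extra burden relative to the paper: the paper also applies \eqref{beziehung} to the resolvent solution $u\in D(L)$ in its surjectivity step, so the same justification is implicitly required there. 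In short: same reduction to the scalar operator, different (and slightly more economical) mechanism for concluding.
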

\begin{proof} First we check the symmetry of $\cL$. Let $U,\tilde U\in D(\cL)$, i.e., by Lemma~\ref{spaces}, 
$$
U(x)=u(r,x_3) \begin{pmatrix} -x_2 \\ x_1 \\ 0 \end{pmatrix}, \quad \tilde U(x)=\tilde u(r,x_3) \begin{pmatrix} -x_2 \\ x_1 \\ 0 \end{pmatrix}
$$
for some $u,\tilde u\in D(L)$. Thus
\begin{align}
\langle \cL U,\tilde U\rangle_{L^2(\R^3)} &= \left\langle  \begin{pmatrix} -x_2 \\ x_1 \\ 0 \end{pmatrix}(Lu)(r,x_3), \begin{pmatrix} -x_2 \\ x_1 \\ 0 \end{pmatrix} \tilde u(r,x_3)\right\rangle_{L^2(\R^3)} \nonumber\\
&= 2\pi\langle Lu,\tilde u\rangle_{L^2_{\rm{cyl}}} \label{eq_quad_form}\\
&= 2\pi\langle u,L\tilde u\rangle_{L^2_{\rm{cyl}}} \mbox{ since $L$ is selfadjoint} \nonumber\\
&= \langle U,\cL \tilde U\rangle_{L^2(\R^3)}. \nonumber
\end{align}
To show that $\cL$ is selfadjoint it suffices to show that for some $\mu\in\R$ the operator 
$$
\cL-\mu\Id: D(\cL)\to L^2_{G_1}(\R^3)
$$
is onto, cf. \cite{Ed_Ev}, Theorem 4.2. We choose any $\mu$ in the resolvent set of $L$, e.g. $\mu=-\|V\|_\infty-1$. Let $F\in L^2_{G_1}(\R^3)$, i.e., there exists $f\in L^2_{\rm{cyl}}(r^3drdx_3)$ with 
$$
F(x)=f(r,x_3) \begin{pmatrix} -x_2 \\ x_1 \\ 0 \end{pmatrix}.
$$
Since $\mu$ lies in the resolvent set of $L$ we can find $u\in D(L)=H^2_{\rm{cyl}}(r^3drdx_3)$ such that 
$Lu-\mu u = f$. Defining 
$$
U(x)=u(r,x_3) \begin{pmatrix} -x_2 \\ x_1 \\ 0 \end{pmatrix}
$$
and using \eqref{beziehung} we get $\cL U-\mu U=F$. This finishes the proof of the selfadjointness of $\cL$ and also of the identity of the resolvent sets of ${\mathcal L}$ and of $L$ and hence $\sigma(\cL)=\sigma(L)$ follows.
\end{proof}

We assume now that our cylindrical waveguide geometry is $1$-periodic along the $x_3$-direction, i.e., $V(r,x_3)=V(r,x_3+1)$ for a.a. $r>0$ and $x_3\in \R$. 
Besides the cylindrical symmetry and the periodicity in $x_3$-direction, the existence of ground states in the focusing case relies on the assumption that $0\not \in \sigma(\mathcal{L})$. In the following we will construct an example of a potential $V=V(r,x_3)$ which is $1$-periodic w.r.t. $x_3$, with $0\not \in \sigma(\mathcal{L})$ and with $\sigma(\mathcal{L})\cap (-\infty,0)\not =\emptyset$. Recall that the physical significance of the sign of $V$ has been explained at the beginning of this section. 

\medskip

Let us assume that the linear potential $V$ is separable, i.e.
\beq\label{E:V_sep}
V(r,x_3)=W(r)+P(x_3).
\eeq
with $W\in L^\infty(0,\infty)$, $P\in L^\infty(\R)$ and $P(x_3+1)=P(x_3)$ for
all $x_3\in \R$ (later we will assume that $P$ is piecewise continuous in order to have \eqref{spectrum_L_p}). The splitting of the potential implies a splitting of the operator $L$ as follows (so far we consider this only on a formal level): if $u(r,x_3)=v(r)w(x_3)$ then
$$
(Lu)(r,x_3) = w(x_3)(L_r v)(r) + v(r)(L_p w)(x_3)
$$
where $L_r, L_p$ are given by the following differential expressions
$$
L_r = -\frac{1}{r^3}\frac{\partial}{\partial r}\left(r^3\frac{\partial}{\partial r}\right)+W(r),  \qquad L_p = - \frac{\partial^2}{\partial x_3^2}+P(x_3).
$$
We can give these differential expressions the meaning of proper selfadjoint operators by specifying their domains of definition properly as in the following lemma.

\begin{lemma} Let $W\in L^\infty(0,\infty), P\in L^\infty(\R)$. The operator $L_p$ defined on $D(L_p)=H^2(\R)\subset L^2(\R)\to L^2(\R)$ is selfadjoint. The operator $L_r$ defined on $D(L_r)=H^2_{\rm{rad}}(r^3dr)\subset L^2_{\rm{rad}}(r^3dr)\to L^2_{\rm{rad}}(r^3dr)$ is selfadjoint.
\end{lemma}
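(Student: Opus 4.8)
The claim is that two ordinary differential operators, $L_p$ on $L^2(\R)$ and $L_r$ on $L^2_{\rm rad}(r^3\,dr)$, are selfadjoint on the indicated domains. For $L_p$, the plan is to invoke the standard fact that $-\partial_{x_3}^2$ with domain $H^2(\R)$ is selfadjoint on $L^2(\R)$, and then note that $P\in L^\infty(\R)$ acts as a bounded, symmetric multiplication operator; by the Kato--Rellich theorem a bounded symmetric perturbation of a selfadjoint operator is again selfadjoint on the same domain. For $L_r$, the cleanest route is the same device already used in Lemma~\ref{l:self_l}: for $v\in H^2_{\rm rad}(r^3\,dr)$ one has $(L_r v)\circ\Psi_{\rm rad} = -\Delta(v\circ\Psi_{\rm rad}) + (Wv)\circ\Psi_{\rm rad}$ on $\R^4$, i.e.\ $L_r$ is unitarily equivalent (via $v\mapsto v\circ\Psi_{\rm rad}$, which by the space identities established before Lemma~\ref{hardy} is a unitary map from $L^2_{\rm rad}(r^3\,dr)$ onto the subspace of radial functions in $L^2(\R^4)$, carrying $H^2_{\rm rad}(r^3\,dr)$ onto the radial functions in $H^2(\R^4)$) to the restriction of $-\Delta + W$ to radial functions. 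Since $-\Delta$ with domain $H^2(\R^4)$ is selfadjoint and commutes with the $O(4)$-action, it restricts to a selfadjoint operator on the radial subspace; adding the bounded symmetric multiplication by $W\in L^\infty$ and again applying Kato--Rellich finishes the argument.

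A more self-contained alternative, if one prefers not to pass through $\R^4$, is to verify selfadjointness of $L_r$ directly: symmetry on $C_0^\infty((0,\infty))$ (or on $H^2_{\rm rad}(r^3\,dr)$) follows by integration by parts, using that the weight $r^3$ makes the boundary terms at $r=0$ vanish for functions in the domain; and essential selfadjointness / selfadjointness can be obtained by showing that $L_r - \mu$ is onto for some $\mu$ in the resolvent set of the free operator, exactly as in the proof of Lemma~\ref{lem:self}. I would, however, present the $\R^4$-reduction as the main line since it parallels Lemma~\ref{l:self_l} and avoids re-deriving boundary-term estimates.

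The one point requiring a little care is the claim that $v\mapsto v\circ\Psi_{\rm rad}$ maps $H^2_{\rm rad}(r^3\,dr)$ \emph{onto} the space of radial $H^2(\R^4)$-functions and that this map is a bijection of domains, not merely a continuous injection. This is precisely the content of the space identities stated (without proof) just before Lemma~\ref{hardy}, and it rests on the same computation carried out in detail in Lemma~\ref{spaces}: one checks that the conditions $v,\, v',\, v'/r,\, v''\in L^2_{r^3}(0,\infty)$ are necessary and sufficient for $v\circ\Psi_{\rm rad}\in H^2(\R^4)$, using the identity $\|D^2 g\|_{L^2(\R^4)} = \|\Delta g\|_{L^2(\R^4)}$ together with Hardy's inequality (the $\R^4$-analogue of Lemma~\ref{hardy}(i)) to control the mixed/lower-order terms. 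Granting these identities, the remaining steps are entirely standard.

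I expect no genuine obstacle here; the lemma is a preparatory one and its proof is routine given the machinery already assembled. The only thing to be vigilant about is not to overlook the domain-surjectivity issue just mentioned, since without it one would only obtain symmetry of $L_r$, not selfadjointness.
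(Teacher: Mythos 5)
Your proposal is correct and follows essentially the same route as the paper: the statement for $L_p$ is standard, and for $L_r$ the paper likewise identifies $(L_r v)\circ\Psi_{\rm rad}=-\Delta(v\circ\Psi_{\rm rad})+(Wv)\circ\Psi_{\rm rad}$, i.e.\ views $L_r$ as the four-dimensional Schr\"odinger operator $-\Delta+W$ on radial functions, relying on the space identity $H^2_{\rm rad}(r^3dr)=\{v:\,v\circ\Psi_{\rm rad}\in H^2(\R^4)\}$ established earlier. Your additional care about the surjectivity of the domain identification and the reduction to the rotation-invariant subspace merely spells out details the paper leaves implicit, so no discrepancy arises.
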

\begin{proof} The statement for $L_p$ is clear. The statement for $L_r$ follows from the observation that for $v\in H^2_{\rm{rad}}(r^3dr)$
$$
(L_r v)\circ \Psi_{\rm{rad}} = -\Delta (v\circ \Psi_{\rm{rad}}) + (Wv)\circ\Psi_{\rm{rad}},
$$
i.e., $L_r$ coincides with the four dimensional Schr\"odinger operator $-\Delta + W(r)$ with radial symmetry. 
\end{proof}

Now we are in a position to state that for $V(r,x_3)=W(r)+P(x_3)$ the spectrum of $\mathcal{L}$ can
be computed by separation of variables. 

\begin{lemma} \label{lem:separabel}
Let $W\in L^\infty(0,\infty), P\in L^\infty(\R)$ and $V(r,x_3)=W(r)+P(x_3)$. Then $\sigma(\cL)=\sigma(L)=\sigma(L_r)+\sigma(L_p)$.
\end{lemma}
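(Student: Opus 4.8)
The plan is to reduce the statement to a standard fact about sums of commuting self-adjoint operators. Since $\sigma(\cL)=\sigma(L)$ has already been established in Lemma~\ref{lem:self}, it remains to show $\sigma(L)=\sigma(L_r)+\sigma(L_p)$ when $V(r,x_3)=W(r)+P(x_3)$. First I would identify $L^2_{\rm{cyl}}(r^3drdx_3)$ with the Hilbert-space tensor product $L^2_{\rm{rad}}(r^3dr)\otimes L^2(\R)$ via $v(r)w(x_3)\mapsto v\otimes w$; this is the natural isomorphism coming from the product structure of the underlying measure $r^3dr\,dx_3$ on $(0,\infty)\times\R$. Under this identification the differential expression $L$ in~\eqref{skalarer_operator} is, at least on the algebraic tensor product of the smooth compactly supported functions, exactly $L_r\otimes\Id + \Id\otimes L_p$, which is the formal computation $(Lu)(r,x_3)=w(x_3)(L_rv)(r)+v(r)(L_pw)(x_3)$ displayed just before the lemma.

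The second step is to check that $L$ really is the (closure of the) operator $L_r\otimes\Id+\Id\otimes L_p$ with its natural domain, i.e. that no extension/domain subtlety arises. Here I would use that $L_r$ and $L_p$ are self-adjoint and bounded below (their potentials $W$, $P$ are bounded, and the kinetic parts are nonnegative via the $\R^4$- resp. $\R^1$-Laplacian identification from the preceding lemma), so the form sum is well defined and self-adjoint; then I would argue that this self-adjoint operator has domain contained in $H^2_{\rm{cyl}}(r^3drdx_3)=D(L)$ and agrees with $L$ there, e.g. by comparing resolvents at a point $\mu=-\|W\|_\infty-\|P\|_\infty-1$ below the spectrum of both summands, using that $(L-\mu)^{-1}$ was constructed in Lemma~\ref{lem:self} and that $(L_r-\mu_1)^{-1}\otimes(L_p-\mu_2)^{-1}$-type resolvents for the tensor sum are standard. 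Alternatively one can observe that the $\R^5$-Laplacian with cylindrical symmetry decomposes along the orthogonal splitting of $\R^5=\R^4\times\R$, so $-\Delta_{\R^5}=-\Delta_{\R^4}\otimes\Id+\Id\otimes(-\partial_{x_3}^2)$ as self-adjoint operators, and then adding the bounded potential $W(r)+P(x_3)=W\otimes\Id+\Id\otimes P$ preserves this.

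The third step is the spectral identity itself: for self-adjoint $A$ on $\cH_1$ and $B$ on $\cH_2$, the closure of $A\otimes\Id+\Id\otimes B$ on $\cH_1\otimes\cH_2$ has spectrum $\overline{\sigma(A)+\sigma(B)}$; when both operators are bounded below the sum set $\sigma(A)+\sigma(B)$ is already closed, so no closure is needed. This is classical (e.g. Reed--Simon, \emph{Methods of Modern Mathematical Physics}, vol.\ I, Theorem VIII.33, or Brezis-type functional calculus arguments with joint spectral measures); I would simply cite it. Combining this with $\sigma(\cL)=\sigma(L)$ gives $\sigma(\cL)=\sigma(L_r)+\sigma(L_p)$.

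The main obstacle I anticipate is not the spectral-sum theorem, which is off the shelf, but the bookkeeping in step two: verifying that the operator-theoretic tensor sum really has domain $H^2_{\rm{cyl}}(r^3drdx_3)$ rather than some larger self-adjoint extension, given that the radial variable lives on a half-line with a singular weight $r^3$. The clean way around this is to work entirely through the isometries $\Psi_{\rm{rad}}$ and $\Psi$ of the earlier lemmas, so that $L_r\simeq -\Delta_{\R^4}+W$ and $L\simeq-\Delta_{\R^5}+W+P$ act on genuine Sobolev spaces $H^2(\R^4)$, $H^2(\R^5)$ of cylindrically symmetric functions, where the decomposition of $-\Delta_{\R^5}$ and the absence of domain ambiguity are standard; the weight and the half-line then never appear explicitly. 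If one prefers to avoid even citing the tensor-product spectral theorem, one can instead prove both inclusions $\sigma(L_r)+\sigma(L_p)\subseteq\sigma(L)$ and $\sigma(L)\subseteq\sigma(L_r)+\sigma(L_p)$ by hand using Weyl sequences: a Weyl sequence for $L_r$ at $\lambda$ tensored with one for $L_p$ at $\mu$ is a Weyl sequence for $L$ at $\lambda+\mu$, and conversely a Weyl sequence for $L$ can be decomposed against the joint spectral resolution of the commuting pair $(L_r\otimes\Id,\Id\otimes L_p)$ to produce points of $\sigma(L_r)$ and $\sigma(L_p)$ summing to the given spectral value.
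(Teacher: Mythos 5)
Your proposal is correct and its backbone is the same as the paper's: identify $L^2_{\rm{cyl}}(r^3drdx_3)$ with the product structure coming from $L^2_{\rm{rad}}(r^3dr)$ and $L^2(\R)$, view $L$ as the (closure of the) sum $L_r\otimes\Id+\Id\otimes L_p$, invoke Reed--Simon Theorem~VIII.33 for $\sigma=\overline{\sigma(L_r)+\sigma(L_p)}$, and drop the closure because both spectra are closed and bounded below. The one place where you take a different route is exactly the step you flagged as the main obstacle, namely showing that the selfadjoint tensor sum really coincides with $L$ on $D(L)=H^2_{\rm{cyl}}(r^3drdx_3)$ rather than being some other selfadjoint realization: you propose a resolvent comparison at a point below both spectra, or an explicit splitting of $-\Delta_{\R^5}$ along $\R^4\times\R$ restricted to the symmetric subspaces. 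The paper instead settles this with a short maximality argument: the sum defined on the algebraic tensor domain $D_0$ (finite sums $\sum v_k w_k$ with $v_k\in D(L_r)$, $w_k\in D(L_p)$) is symmetric with selfadjoint closure $\overline{L_r+L_p}$ by the same Reed--Simon theorem, it agrees with $L|_{D_0}$, and since the operator $L$ of Lemma~\ref{l:self_l} is a selfadjoint extension of $\overline{L_r+L_p}$ and a selfadjoint operator admits no proper selfadjoint extension, one gets $\overline{L_r+L_p}=L$ with no resolvent computation and no discussion of the weighted half-line domain. Your alternatives would work (the $\R^5$-Laplacian splitting needs the small extra remark that the cylindrically symmetric subspace reduces all operators involved), but the paper's extension argument is the cheaper way to close that step; in either form the proof is complete.
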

\begin{proof} Let us define the subspace 
$$
D_0 = \left\{ \sum_{k=1}^N v_k(r) w_k(x_3): N\in \N, v_k\in D(L_r), w_k\in D(L_p) \mbox{ for } k=1,\ldots,N.\right\}
$$
and the operator $L_r+L_p$ on $D_0$ by 
$$
(L_r+L_p)\left(\sum_{k=1}^N v_k w_k\right):= \sum_{k=1}^N \bigl((L_r v_k)w_k + v_k (L_p w_k)\bigr).
$$
The subspace $D_0$ is dense in $L^2_{\rm{cyl}}(r^3drdx_3)$ because $D(L_r)\subset L^2_{\rm{rad}}(r^3dr)$ and $D(L_p)\subset L^2(\R)$ are dense. Since $L_r+L_p$ is symmetric, it is therefore closable. Let us recall the definition of the closure of an operator and its domain:
\begin{align*}
D(\overline{L_r+L_p})=\bigl\{ & u \in L^2_{\rm{cyl}}(r^3drdx_3): \exists (u_n)_{n\in\N} \mbox{ in } D_0, z\in L^2_{\rm{cyl}}(r^3drdx_3) \mbox{ s.t.} \\
& u_n\to u \mbox{ in } L^2_{\rm{cyl}}(r^3drdx_3) \mbox{ and }  (L_r+L_p)u_n\to z \mbox{ in }  L^2_{\rm{cyl}}(r^3drdx_3) \bigr\}.
\end{align*}
For $u\in D(\overline{L_r+L_p})$ one defines $\overline{L_r+L_p}(u):=z$. By Theorem~VIII.33 and its Corollary from \cite{RS1}, the selfadjointness of $L_r$, $L_p$ is passed on and yields selfadjointness of $\overline{L_r+L_p}$. Note also that $L_r+L_p= L|_{D_0}$ and hence 
$\overline{L_r+L_p}= \overline{L|_{D_0}}$. Since by Lemma~\ref{l:self_l} the operator $L$ defined on $H^2_{\rm{cyl}}(r^3drdx_3)$ is a selfadjoint extension of $L|_{D_0}$ and hence also of the operator $\overline{L|_{D_0}}=\overline{L_r+L_p}$ which is already selfadjoint, we find that 
$\overline{L_r+L_p}=L$. Again by Theorem~VIII.33 and its Corollary from \cite{RS1} we find the claim $\sigma(L)=\sigma(\overline{L_r+L_p})=\overline{\sigma(L_r)+\sigma(L_p)}=\sigma(L_r)+\sigma(L_p)$, where the last equality holds since the two spectra $\sigma(L_r), \sigma(L_p)$ are closed and bounded from below.
\end{proof}

%
%

Next to the periodicity and boundedness of $P$ let us now sharpen the assumption by requiring additionally that $P$ is piecewise continuous. Then the spectrum of $L_p$ is purely continuous and consists of the union of countably many intervals 
\begin{equation}
\sigma(L_p)=\bigcup_{k=1}^\infty [\nu_{2k-1},\nu_{2k}] \mbox{ with } \nu_{2k-1}<\nu_{2k}
\leq \nu_{2k+1},
\label{spectrum_L_p}
\end{equation}
see Theorem XIII.90 in \cite{RS4}. We assume that the first gap is open, i.e., $$\nu_2<\nu_3.$$ 
Next we describe the radial part of the spectrum of $L$ under some special assumptions on the potential $W$. We start with some properties of Bessel functions.

\begin{lemma} \label{l:ex1} Let $J_1$ denote the order one Bessel function which is regular at $0$ and $K_1$ the order one modified Bessel function which decreases exponentially at infinity. Let $0<j_1<j_2<\ldots$ be the positive zeroes of $J_1$ and $0<j_1'<j_2'<\ldots$ be the positive zeroes of $J_1'$. Let 
$$
\eta_\ast = \sqrt{(j_1)^2-(j_1')^2}, \quad \eta^\ast := \sqrt{(j_2')^2-(j_1)^2}.
$$
Then $\eta_\ast<\eta^\ast$ and for every $\eta\in [\eta_\ast,\eta^\ast]$ there exists a unique value $\xi=\xi(\eta)\in (j_1', j_1)$ with the properties
\beq
\frac{J_1(\xi)}{\xi J_1'(\xi)} = \frac{K_1(\eta)}{\eta K_1'(\eta)} \quad \mbox{ and } \quad  (j_1)^2 < \xi^2+\eta^2 < (j_2')^2.
\label{intersect}
\eeq
\label{bessel_zeugs}
\end{lemma}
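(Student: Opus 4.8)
The strategy is to analyze the two intersection conditions in \eqref{intersect} via the monotonicity behavior of the functions
$$
\phi(\xi) := \frac{J_1(\xi)}{\xi J_1'(\xi)}, \qquad \psi(\eta) := \frac{K_1(\eta)}{\eta K_1'(\eta)},
$$
on appropriate intervals, and then use the intermediate value theorem together with a careful bookkeeping of the values at the endpoints $j_1'$, $j_1$, $j_2'$ on the $\xi$-side and $\eta_\ast$, $\eta^\ast$ on the $\eta$-side. First I would record the relevant facts about Bessel functions: $J_1'(\xi)=0$ precisely at the $j_k'$, $J_1(\xi)=0$ precisely at the $j_k$, the interlacing $j_1'<j_1<j_2'<j_2<\cdots$, and the sign pattern of $J_1$ and $J_1'$ on $(0,j_2')$. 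In particular on $(j_1',j_1)$ the numerator $J_1(\xi)$ is positive while $J_1'(\xi)$ is negative, so $\phi(\xi)<0$ there; $\phi(j_1')=-\infty$ (pole, since $J_1(j_1')>0$), $\phi(j_1)=0$, and one checks $\phi$ is strictly increasing on $(j_1',j_1)$ using the Bessel differential equation $\xi^2 J_1''+\xi J_1'+(\xi^2-1)J_1=0$ to compute $\phi'$ and see it has a sign. On the modified-Bessel side, $K_1>0$ and $K_1'<0$ on all of $(0,\infty)$, so $\psi(\eta)<0$ throughout, with $\psi(\eta)\to 0^-$ as $\eta\to\infty$ and $\psi(\eta)\to -\infty$ as $\eta\to 0^+$; again using the modified Bessel equation $\eta^2 K_1''+\eta K_1'-(\eta^2+1)K_1=0$ one shows $\psi$ is strictly monotone (increasing) in $\eta$, so it is a bijection from $(0,\infty)$ onto $(-\infty,0)$.

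Given these monotonicities, the map $\xi\mapsto \eta$ defined implicitly by $\phi(\xi)=\psi(\eta)$ is a strictly increasing homeomorphism from $(j_1',j_1)$ onto $(0,\infty)$: as $\xi\downarrow j_1'$ the common value tends to $-\infty$, forcing $\eta\downarrow 0$, and as $\xi\uparrow j_1$ the common value tends to $0^-$, forcing $\eta\to\infty$. Hence for each $\eta\in(0,\infty)$ there is a unique $\xi=\xi(\eta)\in(j_1',j_1)$ solving the first equation in \eqref{intersect}, and $\eta\mapsto\xi(\eta)$ is continuous and strictly increasing. It remains to verify the pinching inequality $(j_1)^2<\xi(\eta)^2+\eta^2<(j_2')^2$ exactly on the closed interval $[\eta_\ast,\eta^\ast]$. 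For this I would introduce $g(\eta):=\xi(\eta)^2+\eta^2$, a continuous strictly increasing function of $\eta$, and pin down the two critical values of $\eta$. At $\eta=\eta_\ast=\sqrt{j_1^2-(j_1')^2}$ we want $g(\eta_\ast)=j_1^2$, equivalently $\xi(\eta_\ast)=j_1'$; but $\xi(\eta_\ast)\in(j_1',j_1)$ strictly, so instead the correct reading is that $g(\eta_\ast)\ge j_1^2$ (since $\xi(\eta_\ast)>j_1'$ gives $g(\eta_\ast)>j_1^2$), and similarly I would check $g(\eta^\ast)<j_2'^2$ comes from $\xi(\eta^\ast)<j_1$ so $g(\eta^\ast)<j_1^2+(j_2'^2-j_1^2)=j_2'^2$; by monotonicity of $g$ the double inequality then holds for all $\eta\in[\eta_\ast,\eta^\ast]$, and $\eta_\ast<\eta^\ast$ follows from $j_1'<j_2'$ (indeed $(j_1)^2-(j_1')^2<(j_2')^2-(j_1)^2 \iff 2j_1^2 < j_1'^2+j_2'^2$, which holds since $j_1<j_2'$ and $j_1>j_1'$, so $j_1^2<j_2'^2$ and $j_1^2 < \tfrac{1}{2}(j_1'^2+j_2'^2)$ would need $j_1^2 < \tfrac12 j_1'^2+\tfrac12 j_2'^2$ — this I would verify from the explicit ordering or by a short estimate).

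\textbf{Main obstacle.} The technical heart is establishing the strict monotonicity of $\phi$ on $(j_1',j_1)$ and of $\psi$ on $(0,\infty)$. Differentiating $\phi(\xi)=J_1(\xi)/(\xi J_1'(\xi))$ gives
$$
\phi'(\xi) = \frac{J_1'(\xi)\cdot \xi J_1'(\xi) - J_1(\xi)\bigl(J_1'(\xi)+\xi J_1''(\xi)\bigr)}{\bigl(\xi J_1'(\xi)\bigr)^2},
$$
and one must substitute $\xi J_1''=-J_1'-(\xi-1/\xi)J_1$ from Bessel's equation and check the resulting numerator has a definite sign on the whole interval $(j_1',j_1)$; an analogous computation with the minus sign works for $\psi$. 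I expect this sign determination — showing the reduced numerator $\xi^2 J_1'(\xi)^2 + J_1(\xi)^2(\xi^2-1) + \dots$ (a Wronskian-type combination) is positive — to be the one genuinely delicate point, handled by recognizing it as (a multiple of) the standard positive-definite quantity $\xi^2(J_1')^2 - (\xi^2-1)J_1^2 \ge 0$ arising from the monotonicity of the Prüfer phase of the Bessel operator, or by invoking a known result on the monotonicity of logarithmic derivatives of Bessel functions. Once monotonicity is in hand, everything else is the intermediate value theorem and the elementary algebra of the endpoint evaluations.
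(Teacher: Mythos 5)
Your overall strategy (monotonicity of $\xi\mapsto J_1(\xi)/(\xi J_1'(\xi))$ and $\eta\mapsto K_1(\eta)/(\eta K_1'(\eta))$, then the intermediate value theorem plus endpoint algebra) is exactly the paper's route, but there is a genuine gap at the one point that cannot be reduced to soft arguments: the inequality $\eta_\ast<\eta^\ast$, i.e.\ $2j_1^2<(j_1')^2+(j_2')^2$. You assert it ``follows from $j_1'<j_2'$'' and from $j_1'<j_1<j_2'$, but interlacing only gives $j_1^2<(j_2')^2$, which is strictly weaker; the needed statement $j_1^2-(j_1')^2<(j_2')^2-j_1^2$ could perfectly well fail for an abstract triple with that ordering (take $j_1$ close to $j_2'$). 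Your own parenthesis concedes the point is unverified. The paper settles it by quoting the numerical values $j_1\approx 3.83171$, $j_1'\approx 1.84118$, $j_2'\approx 5.33144$ (Abramowitz--Stegun, Table 9.5), giving $2j_1^2\approx 29.4 < 31.8\approx (j_1')^2+(j_2')^2$; some quantitative input of this kind (numerics or sharp bounds on the zeros) is unavoidable, and without it your proof of the very first claim of the lemma is incomplete.

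Two further points. First, your asymptotics for the modified Bessel side are wrong: since $K_1(\eta)\sim 1/\eta$ and $K_1'(\eta)\sim -1/\eta^2$ as $\eta\to 0^+$, one has $K_1(\eta)/(\eta K_1'(\eta))\to -1$, not $-\infty$; thus $\psi$ maps $(0,\infty)$ into $(-1,0)$ and is not onto $(-\infty,0)$, so the claimed homeomorphism between $\xi\in(j_1',j_1)$ and $\eta\in(0,\infty)$ fails. This does not hurt existence and uniqueness of $\xi(\eta)$ --- all you need is $\psi(\eta)<0$ together with $\phi$ increasing from $-\infty$ to $0$ on $(j_1',j_1)$, which is how the paper argues --- but the statement as you wrote it is false and the monotonicity of $\eta\mapsto\xi(\eta)$ that you invoke for the pinching inequality should instead be replaced by the direct estimate $j_1'^2+\eta_\ast^2\le (j_1')^2+\eta^2<\xi(\eta)^2+\eta^2<j_1^2+\eta^2\le j_1^2+\eta^{\ast 2}$, which needs nothing beyond $\xi(\eta)\in(j_1',j_1)$. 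Second, the monotonicity of $\phi$ and $\psi$, which you correctly identify as the technical heart, is only gestured at, and the quantity you propose, $\xi^2(J_1')^2-(\xi^2-1)J_1^2\ge 0$, has the wrong sign pattern: what is actually needed (and what the paper proves in its Appendix by multiplying the Bessel equation by $J_1'$, integrating, and integrating by parts) is the identity $x^2(J_1'(x))^2=(1-x^2)J_1(x)^2+\int_0^x 2sJ_1(s)^2\,ds$, hence $x^2(J_1')^2>(1-x^2)J_1^2$ on $(0,\infty)$, with the analogous identity for $K_1$ obtained by integrating from $x$ to $\infty$. As written, your sketch would need this corrected and carried out before the monotonicity claims can be used.
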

\begin{proof} 
Define $\tilde g: (0,j_2')\setminus\{j_1'\}\to \R$ and $\tilde h:(0,\infty)\to \R$ by 
$$
\tilde g(\xi) := \frac{J_1(\xi)}{\xi J_1'(\xi)}, \quad \tilde h(\eta) := \frac{K_1(\eta)}{\eta K_1'(\eta)}.
$$
Let us mention that the properties of $\tilde g, \tilde h$ used in this proof are proved in Lemma~\ref{prop_bessel} in the Appendix. Since $\tilde g(\xi)\to 1$ as $\xi\to 0+$ and $\tilde h(\eta)\to -1$ as $\eta\to 0+$, the two functions can be extended continuously to $0$. Moreover, on $[0,j_1')$ the function $\tilde g$ is strictly increasing from $1$ to $+\infty$, on $(j_1',j_2')$ it increases strictly from $-\infty$ to $+\infty$ with a zero at $j_1$. The function $\tilde h$ is negative and strictly increases on $[0,\infty)$ from $-1$ to $0$. Suppose a value $\eta>0$ is given. By the strict monotonicity of $\tilde g$ we can find a unique 
solution $\xi=\xi(\eta)$ of $\tilde g(\xi) = \tilde h(\eta)$ within the interval $(j_1',j_1)$. Now we want to ensure that the pair $(\xi(\eta),\eta)$ satisfies the constraint in \eqref{intersect}. Since $\xi(\eta)\in (j_1',j_1)$ the constraint $(j_1)^2 < \xi^2+\eta^2 < (j_2')^2$ is certainly satisfied if we impose the following restriction on $\eta$:
$$
\eta^2 \in \left((j_1)^2-(j_1')^2,(j_2')^2-(j_1)^2\right) = (\eta_*^2,\eta^{*2}).
$$
Note that $\eta^\ast>\eta_\ast$ is equivalent to $2 (j_1)^2 < (j_1')^2+(j_2')^2$. This inequality can be checked using the numerical values in Table 9.5 of \cite{AbrSteg}. Up to an error in omitted digits the values are $j_1=3.83171$, $j_1'=1.84118$, and $j_2'=5.33144$ so that $2 (j_1)^2 < (j_1')^2+(j_2')^2$ holds. 
\end{proof} 

\begin{lemma} \label{l:ex2} Assume $\nu_2<\nu_3$ and choose values $\mu_0, W_\infty$ such that 
\begin{equation}\label{E:mu0_Winf}
-\nu_3 < \mu_0 < -\nu_2 < -\nu_1 < W_\infty.
\end{equation}
Let $\eta\in [\eta_\ast,\eta^\ast]$ and let $\xi(\eta)$ be as in Lemma~\ref{bessel_zeugs}. If we define 
$$
\delta := \frac{\eta}{\sqrt{W_\infty-\mu_0}} \quad \mbox{ and } \quad W_0 := \mu_0-\left(\frac{\xi(\eta)}{\delta}\right)^2
$$
as well as 
\beq\label{E:V1}
W(r) = \left\{
\begin{array}{ll}
W_0, & 0\leq r < \delta, \vspace{\jot}\\
W_\infty, & r \geq \delta,
\end{array} \right.
\eeq
then $\mu_0$ is an eigenvalue of $L_r$. There are no other eigenvalues below the essential spectrum $[W_\infty,\infty)$ and hence 
$$
\sigma(L_r)=\{\mu_0\}\cup[W_\infty,\infty).
$$
\label{single_ev}
\end{lemma}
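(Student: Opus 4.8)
The plan is to exploit that, as noted in the discussion preceding the lemma, $L_r$ is unitarily equivalent (via $v\mapsto v\circ\Psi_{\rm{rad}}$) to the four--dimensional radial Schr\"odinger operator $-\Delta+W(r)$ acting on radial functions in $L^2(\R^4)$, and that $W$ equals the constant $W_\infty$ on $\{r>\delta\}$. Since $W-W_\infty$ is bounded with compact support, multiplication by it is relatively compact with respect to $-\Delta+W_\infty$, so Weyl's theorem on the essential spectrum gives $\sigma_{\rm ess}(L_r)=\sigma_{\rm ess}(-\Delta+W_\infty)=[W_\infty,\infty)$. Hence below $W_\infty$ the spectrum of the selfadjoint operator $L_r$ is purely discrete, and the lemma reduces to showing that $\mu_0$ is an eigenvalue and the only eigenvalue in $(-\infty,W_\infty)$.

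Next I would carry out the classical matching analysis of $L_ru=\mu u$ for a fixed $\mu<W_\infty$. On $\{0<r<\delta\}$ the equation reads $u''+\tfrac3r u'+(\mu-W_0)u=0$; the only solution whose radial extension lies in $H^2(\R^4)$ near the origin is, up to a constant, $r^{-1}J_1(\sqrt{\mu-W_0}\,r)$ if $\mu>W_0$ (the companion solution behaves like $|y|^{-2}$ in $\R^4$ and is not locally $L^2$) and $r^{-1}I_1(\sqrt{W_0-\mu}\,r)$ if $\mu<W_0$. On $\{r>\delta\}$ the equation is $u''+\tfrac3r u'-(W_\infty-\mu)u=0$ and the unique $L^2(r^3dr)$ solution is $r^{-1}K_1(\sqrt{W_\infty-\mu}\,r)$. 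Imposing continuity of $u$ and $u'$ at $r=\delta$ (which makes the glued function a weak, hence by elliptic regularity $H^2_{\rm loc}$, solution) yields a homogeneous $2\times2$ linear system; cancelling the common $-1/\delta$ term in the logarithmic derivatives reduces its solvability to $\frac{\xi J_1'(\xi)}{J_1(\xi)}=\frac{\eta K_1'(\eta)}{K_1(\eta)}$ with $\xi=\sqrt{\mu-W_0}\,\delta$, $\eta=\sqrt{W_\infty-\mu}\,\delta$. For $\mu\le W_0$ there is no eigenvalue: the corresponding condition has nonnegative left side and strictly negative right side (as $I_1,I_1'>0$ and $K_1>0>K_1'$), or more directly $L_r\ge W_0$ as a quadratic form, with $W_0$ not an eigenvalue since $W>W_0$ on a set of positive measure.

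There remains $\mu\in(W_0,W_\infty)$, for which $\xi$ sweeps $(0,\rho)$ with $\rho^2:=(W_\infty-W_0)\delta^2$, and — away from the zeros of $J_1$ and $J_1'$ — the eigenvalue condition is $\tilde g(\xi)=\tilde h(\eta)$ under $\xi^2+\eta^2=\rho^2$, in the notation of \lemref{bessel_zeugs}. Substituting the definitions $\delta=\eta/\sqrt{W_\infty-\mu_0}$ and $W_0=\mu_0-(\xi(\eta)/\delta)^2$ shows that $\mu_0$ corresponds to $\xi=\xi(\eta)$ and to precisely the value $\eta$ of the lemma, and that $\rho^2=\xi(\eta)^2+\eta^2$; hence the condition holds by \lemref{bessel_zeugs}, and the resulting $u$ is bounded near $0$, decays exponentially at infinity, and therefore lies in $H^2_{\rm{rad}}(r^3dr)$. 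So $\mu_0$ is a (simple) eigenvalue of $L_r$.

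The remaining, and main, point is uniqueness. I would set $\psi(\xi):=\tilde g(\xi)-\tilde h\big(\sqrt{\rho^2-\xi^2}\,\big)$ on $(0,\rho)$ and count its zeros. By \lemref{bessel_zeugs} we have $\rho^2=\xi(\eta)^2+\eta^2\in\big((j_1)^2,(j_2')^2\big)$, so $j_1'<j_1<\rho<j_2'$. Using the monotonicity and sign properties of $\tilde g$ and $\tilde h$ collected in Lemma~\ref{prop_bessel}: on $(0,j_1')$ and on $(j_1,\rho)$ one has $\tilde g>0>\tilde h$, hence $\psi>0$; near the pole $j_1'$ of $\tilde g$ there is no zero; and on $(j_1',j_1)$ the map $\xi\mapsto\sqrt{\rho^2-\xi^2}$ is decreasing and $\tilde h$ increasing, so $-\tilde h\big(\sqrt{\rho^2-\xi^2}\,\big)$ and $\tilde g$ are both strictly increasing there, whence $\psi$ increases strictly from $-\infty$ (as $\xi\downarrow j_1'$) to $\psi(j_1)=-\tilde h\big(\sqrt{\rho^2-j_1^2}\,\big)>0$ and has exactly one zero, which must be $\xi(\eta)$ since we already produced a zero there. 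Converting $\xi$ back via $\mu=W_0+\xi^2/\delta^2$ shows $\mu_0$ is the unique eigenvalue below $W_\infty$, and combined with $\sigma_{\rm ess}(L_r)=[W_\infty,\infty)$ this gives $\sigma(L_r)=\{\mu_0\}\cup[W_\infty,\infty)$. The delicate bookkeeping I expect to spend most effort on is the behaviour of $\tilde g$ at and near its pole $j_1'$ (and ensuring $j_1'<\rho$, which follows from $\rho>j_1>j_1'$), and the verification that $C^1$--matching at $r=\delta$ is exactly the condition for membership in $D(L_r)=H^2_{\rm{rad}}(r^3dr)$.
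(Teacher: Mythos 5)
Your proposal is correct and follows essentially the same route as the paper: reduce to the Bessel-function matching at $r=\delta$, use Lemma~\ref{bessel_zeugs} to produce the eigenvalue $\mu_0$, and use the monotonicity properties of $\tilde g$ and $\tilde h$ (Lemma~\ref{prop_bessel}) to show the matching condition has exactly one solution below $W_\infty$. Your parametrization by $\xi$ under the constraint $\xi^2+\eta^2=\rho^2$ is just a change of variables from the paper's parametrization by $\mu\in[W_0,W_\infty]$, and your explicit appeal to Weyl's theorem for $\sigma_{\rm ess}(L_r)=[W_\infty,\infty)$ merely makes precise what the paper asserts directly.
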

\begin{proof} Due to the form of $W$ as in \eqref{E:V1} we have $\sigma(L_r)\subset [W_0,\infty)$ and
$\sigma_{\text{ess}}(L_r)=[W_\infty,\infty)$. Now consider the eigenvalue equation
\beq
\label{eigen_equation}
-u''(r) -\frac{3}{r} u'(r) = \left(-W(r)+\mu\right)u.
\eeq
Let us check that neither $W_0$ nor $W_\infty$ are eigenvalues of $L_r$. First suppose $\mu=W_0$ is an eigenvalue. Note that $-W(r)+W_0\leq 0$ on $[0,\infty)$ and $<0$ on $(\delta,\infty)$. Multiplication of \eqref{eigen_equation} with a corresponding eigenfunction $u$ and integration with respect to the measure $r^3\,dr$ on $(0,\infty)$ yields a positive left hand side and a negative right hand side. Hence $\mu=W_0$ is not an eigenvalue. Now suppose $\mu=W_\infty$ is an eigenvalue and $u$ is a corresponding eigenfunction. Then $-W(r)+W_0=0$ for $r\geq \delta$ so that $u(r)=\const r^{-2}$ on $[\delta,\infty)$. But no matter how $r^{-2}$ extends to $[0,\delta)$ the function $u$ does not belong to $L^2_{\rm{rad}}(r^3dr)$ because $\int_\delta^\infty r^{-4}\cdot r^3dr=\infty$. So $\mu=W_\infty$ is also not an eigenvalue of $L_r$. 

\medskip

Since $\min \sigma(L_r)\geq W_0$ and since neither $W_0$ nor $W_\infty$ are eigenvalues of $L_r$, we are looking for solutions of \eqref{eigen_equation} with $W_0 < \mu < W_\infty$. As $W(r)$ only takes the values $W_0, W_\infty$, equation \eqref{eigen_equation} is transformed via $u(r)=r^{-1}v(\sqrt{\mu-W_0}r), s:=\sqrt{\mu-W_0}r$ into 
$$
s^2 v'' + sv' + (s^2-1)v = 0 \mbox{ for }  0 \leq s \leq \delta \sqrt{\mu-W_0}
$$
and via $u(r) = r^{-1}w(\sqrt{W_\infty-\mu}r), s:=\sqrt{W_\infty-\mu}r$ into
$$
s^2 w'' + sw' -(s^2+1)w = 0 \mbox{ for } \delta\sqrt{W_\infty-\mu} \leq s < \infty.
$$
Thus, $v(s)= \alpha J_1(s)$ is a multiple of the order one Bessel function which is regular at $0$ and $w(s)=\beta K_1(s)$ is a multiple of the order one modified Bessel function which is exponentially decaying at infinity. Altogether we obtain 
$$
u(r) = \left\{ \begin{array}{ll}
\alpha r^{-1}J_1(\sqrt{\mu-W_0}r) & \mbox{ for } 0 \leq r \leq \delta, \vspace{\jot}\\
\beta r^{-1}K_1(\sqrt{W_\infty-\mu}r) & \mbox{ for } \delta \leq r < \infty.
\end{array}\right.
$$
We need to choose $\alpha,\beta, \mu$ in order to obtain a $C^1$-function at $r=\delta$. This leads to the equation 
$$
g(\mu):=\frac{J_1(\sqrt{\mu-W_0}\delta)}{\sqrt{\mu-W_0}\delta J_1'(\sqrt{\mu-W_0}\delta)} = \frac{K_1(\sqrt{W_\infty-\mu}\delta)}{\sqrt{W_\infty-\mu}\delta K_1'(\sqrt{W_\infty-\mu}\delta)}=:h(\mu)
$$
and our choice of $W_0$ and $\delta$ such that $\sqrt{\mu_0-W_0}\delta=\xi$ and $\sqrt{W_\infty-\mu_0}\delta=\eta(\xi)$ guarantees the $C^1$-matching at $\mu=\mu_0$. We have therefore verified that $\mu_0$ is indeed an eigenvalue of $L_r$. 
It remains to show that there is no other eigenvalue. 

We analyze the two sides of the equation $g(\mu)=h(\mu)$ independently. We have already mentioned in the proof of the preceeding lemma that  
$\frac{K_1(x)}{xK_1'(x)}$ is a negative and increasing function of $x$ with $\lim_{x\to 0+} \frac{K_1(x)}{xK_1'(x)}=-1$. Likewise the function $\frac{J_1(x)}{xJ_1'(x)}$  satisfies $\lim_{x\to 0+}\frac{J_1(x)}{xJ_1'(x)}=1$, has its zeroes at $j_1, j_2, \ldots$ and its poles at $j_1', j_2',\ldots$ and is increasing between $0$ and $j_1'$ and between two consecutive poles. The proof of these statements is given in the Appendix. Thus, as $\mu$ runs through $[W_0,W_\infty]$, the function $g(\mu)$ starts from the value $1$ and increases up to its first pole at $W_0+\left(\frac{j_1'}{\delta}\right)^2$. On the interval $(W_0 +\left(\frac{j_1'}{\delta}\right)^2,W_0+\left(\frac{j_1}{\delta}\right)^2]$ it increases from $-\infty$ to $0$ and on $[W_0+\left(\frac{j_1}{\delta}\right)^2, W_\infty]$ it increases from $0$ to a positive value. The function $h(\mu)$ stays negative and strictly decreases to the value $-1$ as $\mu$ ranges through the interval $[W_0,W_\infty]$, cf. Figure~\ref{sketch} for a plot of an example of the two functions. Therefore, on $[W_0,W_\infty]$ the two functions $g, 
h$ 
intersect exactly once provided $W_\infty$ lies between the first zero and the second pole of $g$, i.e., provided 
\begin{equation}\label{E:ev-cond}
\left(\frac{j_1}{\delta}\right)^2 < W_\infty -W_0 < \left(\frac{j_2'}{\delta}\right)^2. 
\end{equation}\eqref{E:ev-cond} is equivalent to $j_1^2<\xi^2+\eta^2<(j_2')^2$. The latter is guaranteed by our choice of $\xi, \eta$ and from Lemma~\ref{bessel_zeugs}.
\begin{figure}\label{sketch}
\scalebox{0.55}{\includegraphics{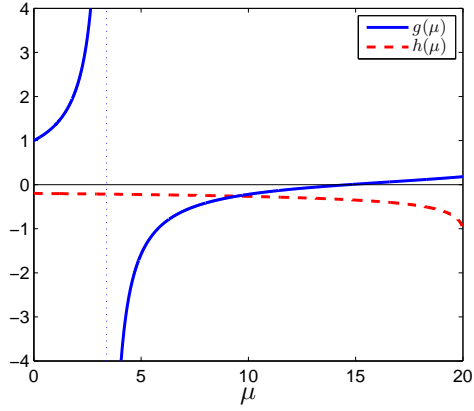}}
\caption{Functions $g$ and $h$ for $\delta=1, W_0=0, W_\infty=20$.}
\end{figure}
\end{proof}

As we have seen from Lemma~\ref{single_ev}, the piecewise constant function $W$ given in \eqref{E:V1} together with the choice of $\mu_0, W_\infty$, the restriction on $\eta$ and the definition of $\delta$ and $W_0$ ensures the existence of exactly one eigenvalue of $L_r$ below the essential spectrum. Therefore these conditions can be considered as assumptions that ensure guiding of a single linear mode in the cylindrical waveguide.

\begin{lemma} \label{l:ex3}
Assume that $P$ is piecewise continuous, $1$-periodic such that $\sigma(L_p)$ has a bounded first open gap. Assume that $W$ is as in \eqref{E:V1} and $\mu_0, W_\infty, \delta$ and $W_0$ are chosen as in Lemma~\ref{single_ev}. Then $0$ is not in the spectrum of $L$.  
\end{lemma}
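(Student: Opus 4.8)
The plan is to reduce everything to the additive structure of $\sigma(L)$ established in Lemma~\ref{lem:separabel} and then read off the conclusion from the explicit descriptions of $\sigma(L_r)$ and $\sigma(L_p)$ together with the chain of inequalities \eqref{E:mu0_Winf}. By Lemma~\ref{lem:separabel}, $\sigma(\cL)=\sigma(L)=\sigma(L_r)+\sigma(L_p)$, so $0\in\sigma(L)$ if and only if there exist $a\in\sigma(L_r)$ and $b\in\sigma(L_p)$ with $a+b=0$; equivalently, $0\notin\sigma(L)$ precisely when $(-\sigma(L_r))\cap\sigma(L_p)=\emptyset$. By Lemma~\ref{single_ev}, applied with the present choice of $W$ as in \eqref{E:V1} and of $\mu_0, W_\infty, \delta, W_0$, we have $\sigma(L_r)=\{\mu_0\}\cup[W_\infty,\infty)$, hence $-\sigma(L_r)=\{-\mu_0\}\cup(-\infty,-W_\infty]$. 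On the other side, since $P$ is bounded, $1$-periodic and piecewise continuous, \eqref{spectrum_L_p} gives $\sigma(L_p)=\bigcup_{k\ge 1}[\nu_{2k-1},\nu_{2k}]\subseteq[\nu_1,\infty)$, and by hypothesis the first gap $(\nu_2,\nu_3)$ is a nonempty bounded interval disjoint from $\sigma(L_p)$.

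First I would dispose of the continuous part $(-\infty,-W_\infty]$ of $-\sigma(L_r)$. Since $\sigma(L_p)\subseteq[\nu_1,\infty)$, and since the last inequality in \eqref{E:mu0_Winf}, $-\nu_1<W_\infty$, is equivalent to $-W_\infty<\nu_1$, we get $(-\infty,-W_\infty]\cap\sigma(L_p)=\emptyset$.

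Next I would treat the isolated point $-\mu_0$. The first two inequalities in \eqref{E:mu0_Winf}, namely $-\nu_3<\mu_0<-\nu_2$, are equivalent to $\nu_2<-\mu_0<\nu_3$, so $-\mu_0$ lies in the open first spectral gap of $L_p$, whence $-\mu_0\notin\sigma(L_p)$. Combining the two cases yields $(-\sigma(L_r))\cap\sigma(L_p)=\emptyset$, and therefore $0\notin\sigma(L)$, as claimed.

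There is no serious obstacle here: the statement is essentially a bookkeeping consequence of the three earlier structural results (Lemma~\ref{lem:separabel}, Lemma~\ref{single_ev}, and the band structure \eqref{spectrum_L_p}). The only point requiring a little care is tracking the sign reversal when passing from the Minkowski sum $\sigma(L_r)+\sigma(L_p)$ to the disjointness condition $(-\sigma(L_r))\cap\sigma(L_p)=\emptyset$, and observing that the conditions \eqref{E:mu0_Winf} were precisely engineered in Lemma~\ref{single_ev} and the surrounding discussion so that $-\mu_0$ falls strictly inside the gap $(\nu_2,\nu_3)$ while $-W_\infty$ stays strictly below $\min\sigma(L_p)=\nu_1$.
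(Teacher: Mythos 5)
Your proof is correct and follows essentially the same route as the paper: both rest on the Minkowski-sum formula $\sigma(L)=\sigma(L_r)+\sigma(L_p)$ from Lemma~\ref{lem:separabel}, the description $\sigma(L_r)=\{\mu_0\}\cup[W_\infty,\infty)$ from Lemma~\ref{single_ev}, the band structure \eqref{spectrum_L_p}, and the inequalities \eqref{E:mu0_Winf}. The only cosmetic difference is that the paper writes out the sum explicitly as $\bigcup_{k}[\mu_0+\nu_{2k-1},\mu_0+\nu_{2k}]\cup[\nu_1+W_\infty,\infty)$ and checks $0$ avoids it, whereas you phrase the same check as $(-\sigma(L_r))\cap\sigma(L_p)=\emptyset$.
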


\begin{proof} Because of Lemma~\ref{lem:separabel}, Lemma~\ref{l:ex2} and \eqref{spectrum_L_p} we see that 
$$
\sigma(L) = \bigcup_{k=1}^\infty [\mu_0+\nu_{2k-1},\mu_0+\nu_{2k}] \cup
[\nu_1+W_\infty,\infty).
$$
Thus, the inequalities $-\nu_3 < \mu_0 < -\nu_2 < -\nu_1 < W_\infty$ from Lemma~\ref{single_ev} guarantee that $0$ lies in the resolvent of $L$. 
\end{proof}

\noindent
{\bf Remark.} Let us verify that the constructed potential $V$ takes positive values on sets of positive measure for $r>\delta$. Since $V(x)= -\frac{\omega^2}{c^2}n^2(x)$, this implies the unphysical situation of an imaginary refractive index. It is an open problem to construct a negative $V=V(r,x_3)$, which satisfies the condition $0\not\in \sigma(\cL)$. First note that 
\begin{align*}
\nu_1 = \min \sigma(L_p) &= \inf_{\psi\in H^1(\R)\setminus\{0\}} \frac{\int_\R {\psi'}^2+P(x)\psi^2\,dx}{\int_\R \psi^2\,dx} \\ 
& < \inf_{\psi\in H^1(\R)\setminus\{0\}} \frac{\int_\R {\psi'}^2+(\esssup P) \psi^2\,dx}{\int_\R \psi^2\,dx}\\
&= \esssup P,
\end{align*}
where the strict inequality comes from the fact that $P$ is a periodic potential which generates an operator with a true first open gap and hence must be non-constant. Together with $W_\infty > -\nu_1$, cf. \eqref{E:mu0_Winf} we obtain
$$
\esssup V = W_\infty+ \esssup P >0.
$$

\section{Ground states in the focusing case}\label{S:foc}

We assume 
$$
1<p<5, \quad  V,\Gamma \in L^\infty(\R^3),  \quad \essinf_{\R^3} \Gamma>0,  \quad \text{ and } \quad 0\notin \sigma(\cL). \leqno {\mbox{(H-foc)}}
$$
For the existence of a Palais-Smale sequence our only assumptions is (H-foc). In the previous section we found examples of cylindrical potentials $V$ of the form $V(r,x_3)=W(r)+P(x_3)$ with periodicity of $P$ which produce $0 \not \in \sigma(\mathcal{L})$. Nevertheless, also other potentials $V$ may satisfy this condition. Note that for the potential $V$ from the previous section the resulting operator $\mathcal{L}$ has negative spectrum, positive spectrum and $0$ is in the resolvent set. The analysis of this chapter will, however, work also in the case where $0<\min\sigma(\mathcal{L})$.

\medskip

For the final step of the proof, i.e. the non-triviality of the limit of a suitable Palais-Smale sequence and the existence of a nontrivial ground state, we use the assumption of cylindrical symmetry and periodicity in $x_3$ of both $V$ and $\Gamma$. 

\medskip

Since ${\mathcal L}$ is a selfadjoint operator on 
$$
D(\mathcal{L})= H_{G_1}^2(\R^3)\subset L^2_{G_1}(\R^3)
$$ 
the spectral theorem yields the existence of a spectral resolution $(P_\lambda)_{\lambda\in\R}$ and hence of projections 
$$
P^+ = \int_0^\infty 1 \,d P_\lambda, \quad P^- = \int_{-\infty}^0 1 \,d P_\lambda
$$
and of the spectral decomposition $H_{G_1}^1(\R^3)= H^+\oplus H^-$ wit $H^\pm=P^\pm(H^1_{G_1}(\R^3))$. We define $U^\pm = P^\pm U$ for all $U\in H_{G_1}^1(\R^3)$. Notice that $\mathcal{L}$ defines the bilinear form $b(U,V) := \int_{\R^3} (\nabla \times U )\cdot (\nabla\times V) + V(x) U\cdot V\,dx$ which is positive/negative definite on the spaces $H^\pm$ by construction. Therefore, we may define the scalar product
$$
\langle U,V\rangle := b(U^+,V^+)-b(U^-,V^-) \quad \mbox{ on } H^1_{G_1}(\R^3)\times H^1_{G_1}(\R^3)
$$
which has the property that $H^+$ is orthogonal to $H^-$, i.e., that $P^\pm$ are orthogonal projections. We denote the norm on $H_{G_1}^1(\R^3)$ corresponding to $\langle \cdot,\cdot\rangle$ by $\interleave \cdot \interleave$. It is equivalent to the $H^1$-norm $\|\cdot\|_{H^1}$ and has the property
$$
\interleave U\interleave = \pm \int_{\R^3} |\nabla \times U|^2+ V(x) |U|^2\,dx \mbox{ for all } U\in H^\pm
$$
such that
$$
\interleave U\interleave^2 = \interleave U^+\interleave^2 +\interleave U^-\interleave^2 \mbox{ for all } U\in H^1_{G_1}(\R^3). 
$$
If the potential $V$ has the unphysical property that $\essinf V>0$ (imaginary refractive index in all of $\R^3$) then $H^-=\{0\}$. But in general $H^-\not = \{0\}$ and therefore the functional $J$ has linking geometry. In any case $J$ is unbounded from below on $H_{G_1}^1(\R^3)$ so that a direct minimization is impossible. Therefore we choose to minimize over the Nehari-Pankov manifold
\[N:=\{U\in H^1_{G_1}(\R^3)\setminus \{0\}: J'[U]\Phi=0 \ \forall \Phi\in
[U]\oplus H^-\}.\]
This approach is analogous to that in \cite{Pankov05}. Later, in Lemma~\ref{natural_constraint}, we will see that the constraint set $N$ does not produce Lagrange multipliers. Note also that for $U\in N$ 
$$
J[U]= \frac{p-1}{2(p+1)} \int_{\R^3} \Gamma(x) |U|^{p+1}\,dx = \frac{p-1}{2(p+1)}\int_{\R^3} |\nabla \times U|^2+ V(x) |U|^2\,dx.
$$

\noindent
{\bf Remarks:} 
(a) Another common approach to obtain a critical point of $J$ is minimization under the
constraint $\|U\|_{L^2}=1$. This however produces a Lagrange multiplier
$\kappa$, which is generally nonzero, so that the minimizer does not solve
\eqref{E:curl-curl-gen} but the equation $\mathcal{L}U+\kappa
U=\Gamma(r,x_3)|U|^{p-1}U$.\\
(b) If $H^-=\{0\}$ then another common approach consists in minimizing $K(U)=\int_{\R^3}|\nabla\times U|^2+V(x)|U|^2\,dx$ under the constraint $\|U\|_{\Gamma,p+1}=1$. This produces a Lagrange multiplier which however can be scaled out. If $H^-$ is not trivial then one may still find a critical point of $K$ under this constraint.

\begin{lemma} \label{bounds}
Under the assumption {\rm (H-foc)} there exist values $\epsilon_1, \epsilon_2, C>0$ such that
$$
\|U\|_{H^1}\geq \epsilon_1, \quad J[U] \geq \epsilon_2, \quad \|U\|_{H^1} \leq
CJ[U]^\frac{p}{p+1}
$$
for all $U\in N$.
\end{lemma}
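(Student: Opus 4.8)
The plan is to squeeze the three estimates out of the Nehari--Pankov conditions combined with the Sobolev embedding $H^1_{G_1}(\R^3)\hookrightarrow L^{p+1}(\R^3)$ (available because $1<p<5$, so $p+1\le 6$) and the already recorded identity $J[U]=\tfrac{p-1}{2(p+1)}\int_{\R^3}\Gamma|U|^{p+1}\,dx$ valid on $N$. The key observation is that if $U\in N$ then not only $U$, but also $U^+=U-U^-$ and $U^-$ lie in $[U]\oplus H^-$, so $J'[U]$ annihilates each of them separately. Testing $J'[U]$ against $U^+$ and $U^-$, and using that the spectral subspaces are $b$-orthogonal with $\pm b$ positive definite on $H^\pm$, produces the two identities
\[
\interleave U^+\interleave^2=\int_{\R^3}\Gamma|U|^{p-1}U\cdot U^+\,dx,\qquad
\interleave U^-\interleave^2=-\int_{\R^3}\Gamma|U|^{p-1}U\cdot U^-\,dx .
\]

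\emph{First bound.} Estimating each right-hand side by H\"older's inequality with exponents $\tfrac{p+1}{p}$ and $p+1$, then by $\|\Gamma\|_\infty$ and the Sobolev embedding, gives $\interleave U^\pm\interleave^2\le C\|\Gamma\|_\infty\interleave U\interleave^{p}\interleave U^\pm\interleave$, hence $\interleave U^\pm\interleave\le C\|\Gamma\|_\infty\interleave U\interleave^{p}$. Squaring and adding yields $\interleave U\interleave^2\le 2C^2\|\Gamma\|_\infty^2\interleave U\interleave^{2p}$; since $U\neq 0$ and $p>1$ this forces $\interleave U\interleave\ge\epsilon_1'>0$, and the equivalence $\interleave\cdot\interleave\sim\|\cdot\|_{H^1}$ (which relies on $0\notin\sigma(\cL)$) turns this into $\|U\|_{H^1}\ge\epsilon_1$.

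\emph{Second and third bounds.} Bounding instead the smaller factor by $\|U\|_{L^{p+1}}$ gives $\interleave U^\pm\interleave\le C\|\Gamma\|_\infty\|U\|_{L^{p+1}}^{p}$, hence $\interleave U\interleave\le 2C\|\Gamma\|_\infty\|U\|_{L^{p+1}}^{p}$, and together with $\interleave U\interleave\ge\epsilon_1'$ this yields $\|U\|_{L^{p+1}}\ge c_0>0$. Plugging this into the identity for $J[U]$ and using $\essinf_{\R^3}\Gamma>0$ gives $J[U]\ge\tfrac{p-1}{2(p+1)}(\essinf\Gamma)\|U\|_{L^{p+1}}^{p+1}\ge\epsilon_2>0$, which is the second claim, and simultaneously $\|U\|_{L^{p+1}}\le (cJ[U])^{1/(p+1)}$; substituting this last inequality into $\interleave U\interleave\le 2C\|\Gamma\|_\infty\|U\|_{L^{p+1}}^{p}$ and reverting to $\|\cdot\|_{H^1}$ gives $\|U\|_{H^1}\le CJ[U]^{p/(p+1)}$. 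The calculations are routine; the only point that needs care is the bookkeeping in the first step, namely that $U^+$ and $U^-$ are individually admissible test directions on $N$ and that the signs in the two quadratic identities come out correctly from $b(U^+,U^-)=0$ with $\pm b$ positive definite on $H^\pm$. Beyond that, no genuine obstacle is anticipated.
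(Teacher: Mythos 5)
Your proposal is correct and follows essentially the same route as the paper: testing $J'[U]$ against $U^{+}=U-U^{-}$ and $U^{-}$ (both admissible since they lie in $[U]\oplus H^{-}$) to get the two quadratic identities, then H\"older, the Sobolev embedding for $p+1<6$, the identity $J[U]=\tfrac{p-1}{2(p+1)}\int_{\R^3}\Gamma|U|^{p+1}\,dx$ on $N$, and the equivalence of $\interleave\cdot\interleave$ with $\|\cdot\|_{H^1}$. The only difference is bookkeeping: the paper proves the third estimate first and deduces the second from the first and third, while you obtain the first estimate first and then get the second and third via $\essinf\Gamma>0$; this is an immaterial variation.
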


\begin{proof} For $U\in N$ we have 
\begin{align*}
\interleave U^+\interleave^2 &= \int_{\R^3} |\nabla\times U^+|^2+ V(x) |U^+|^2 \,dx \\
&= \langle U, U^+ \rangle \\
&= J'[U] U^+ + \int_{\R^3} \Gamma(x) |U|^{p-1} U\cdot U^+\,dx \\
&= \underbrace{J'[U](U-U^-)}_{=0} + \int_{\R^3} \Gamma(x) |U|^{p-1} U\cdot U^+\,dx,
\end{align*}
where the first term in the last equation vanishes due to the definition of the manifold $N$. Hence, by using H\"older's and Sobolev's inequality and again the definition of $N$ we obtain
\begin{align}
\interleave U^+\interleave^2& \leq \left(\int_{\R^3} (\Gamma(x) |U|^p)^\frac{p+1}{p}\,dx\right)^\frac{p}{p+1} \left(\int_{\R^3} |U^+|^{p+1}\,dx\right)^\frac{1}{p+1} \nonumber \\
& \leq C_1 \|\Gamma\|_\infty^\frac{1}{p+1} \left(\int_{\R^3} \Gamma(x) |U|^{p+1}\,dx\right)^\frac{p}{p+1} \|U^+\|_{H^1} \label{norm_estimate} \\
& \leq C_2\left(\frac{2(p+1)}{p-1}J[U]\right)^\frac{p}{p+1} \interleave U^+\interleave. \nonumber
\end{align}
We may repeat the above argument with $U^-$ and find 
\begin{align*}
\interleave U^-\interleave^2 &= -\int_{\R^3} |\nabla\times U^-|^2+ V(x) |U^-|^2 \,dx\\
&= \langle U, U^- \rangle \\
&= -\underbrace{J'[U] U^-}_{=0} -\int_{\R^3} \Gamma(x) |U|^{p-1} U\cdot U^-\,dx,
\end{align*}
and from there the same H\"older- and Sobolev-estimates as before lead to 
$$
\interleave U^-\interleave \leq  C_2\left(\frac{2(p+1)}{p-1}J[U]\right)^\frac{p}{p+1}.
$$
Together with \eqref{norm_estimate} this establishes the third of the three claims. 

\medskip

To see the first of the three claims, we use \eqref{norm_estimate} and the corresponding estimate for $U^-$ to get 
$$
\interleave U^+ \interleave^2 \leq C_3 \|U\|_{L^{p+1}}^p \interleave U^+\interleave, \quad \interleave U^- \interleave^2 \leq C_3 \|U\|_{L^{p+1}}^p \interleave U^-\interleave
$$
from which we obtain $\interleave U\interleave \leq \sqrt{2}C_3 \|U\|_{L^{p+1}}^p$. Due to the Sobolev inequality and since $\interleave\cdot\interleave$ and $\|\cdot\|_{H^1}$ are equivalent we also have $\|U\|_{L^{p+1}}^p\leq C_4 \interleave U\interleave^p$. Hence $ \interleave U\interleave \leq \sqrt{2}C_3C_4 \interleave U\interleave ^p$ and thus $\|U\|_{H^1}\leq C_5 \|U\|^p$. Since $U\not =0$ by the definition of $N$ we obtain the first of the three estimates. Finally, the second estimate follows from the first and the third.
\end{proof}

\begin{lemma} \label{linearization}
Assume {\rm (H-foc)}. The map 
$$
G: \left\{ \begin{array}{rcl} 
H_{G_1}^1(\R^3)\setminus H^- & \to & H_{G_1}^1(\R^3), \vspace{\jot} \\
U & \mapsto & \langle \nabla J[U],U^+\rangle \frac{U^+}{\interleave U^+\interleave^2} + P^- \nabla J[U].
\end{array} \right.
$$
is a $C^1$-map. If $U \in N$ and $X_U:=[U]+H^-$ then the map 
$$
\partial_{X_U} G(U): X_U \to X_U
$$
is negative definite uniformly with respect to bounded subsets of $N$, i.e., if $N_0\subset N$ is bounded, then there exists a value $\delta>0$ such that 
$$
\langle \partial_{X_U} G(U) v,v\rangle \leq -\delta \interleave v\interleave^2 \mbox{ for all } v\in X_U \mbox{ and all } U\in N_0.
$$
In particular $\partial_{X_U} G(U): X_U\to X_U$ has a bounded inverse.
\end{lemma}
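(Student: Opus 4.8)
\emph{Overview.} The plan is to treat the two assertions separately. The $C^1$-regularity of $G$ is soft: since $1<p<5$ the embedding $H^1_{G_1}(\R^3)\hookrightarrow L^{p+1}(\R^3)$ is subcritical, so $U\mapsto\frac{1}{p+1}\int_{\R^3}\Gamma|U|^{p+1}\,dx$ is of class $C^2$ on $H^1_{G_1}(\R^3)$ by the standard Nemytskii-operator calculus; hence $J\in C^2$ and its $\langle\cdot,\cdot\rangle$-gradient $\nabla J$ is $C^1$. Then $G$ is assembled from $\nabla J$, the bounded linear maps $P^\pm$ and $U\mapsto U^\pm$, the bounded quadratic form $U\mapsto\interleave U^+\interleave^2$, the $C^1$-map $U\mapsto\langle\nabla J[U],U^+\rangle$ (bilinear in the $C^1$-object $\nabla J[U]$ and in $U^+$), and division by $\interleave U^+\interleave^2$, which is admissible on $H^1_{G_1}(\R^3)\setminus H^-$ because $U^+\neq0$ there; so $G\in C^1$ off $H^-$.

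\emph{Reduction to $J''$.} For the negative definiteness I would proceed as follows. Fix $U\in N$ and write $v\in X_U$ as $v=tU+w$, $t\in\R$, $w\in H^-$ (the splitting is unique since $U\notin H^-$, which is immediate from the definition of $N$). On $N$ one has $J'[U]\Phi=0$ for all $\Phi\in[U]\oplus H^-$; in particular $\langle\nabla J[U],U^+\rangle=J'[U]U^+=0$ and $J'[U]v^+=0$, because $U^+=U-U^-$ and $v^+=tU^+$ lie in $X_U$. Differentiating $G$ at $U$ and inserting these vanishings, I would first check that $\partial G(U)v$ is a multiple of $U^+=U-U^-$ plus an element of $H^-$, hence lies in $X_U$, so that $\partial_{X_U}G(U)$ is a well-defined bounded operator $X_U\to X_U$; then a short computation using the $\langle\cdot,\cdot\rangle$-orthogonality of the projections $P^\pm$ gives the clean identity
\[
\langle\partial_{X_U}G(U)v,v\rangle=J''[U](v,v)\qquad\text{for all }v\in X_U .
\]
So everything comes down to bounding $J''[U](v,v)$ above by $-\delta\interleave v\interleave^2$, uniformly over a bounded set $N_0\subset N$.

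\emph{The coercivity estimate, and the main obstacle.} Write $J''[U](v,v)=\interleave v^+\interleave^2-\interleave v^-\interleave^2-a(v,v)$, where the bilinear form
\[
a(\phi,\psi):=\int_{\R^3}\Gamma\bigl((p-1)|U|^{p-3}(U\cdot\phi)(U\cdot\psi)+|U|^{p-1}\phi\cdot\psi\bigr)\,dx
\]
is positive semidefinite (using $\Gamma>0$ and $p>1$) with $a(U,U)=p\int_{\R^3}\Gamma|U|^{p+1}\,dx$. Substituting $v^+=tU^+$, $v^-=tU^-+w$, the Nehari--Pankov identity $\interleave U^+\interleave^2-\interleave U^-\interleave^2=\int_{\R^3}\Gamma|U|^{p+1}\,dx$ and the relation $a(U,w)=-p\langle U^-,w\rangle$ (valid on $N$, since there $\int_{\R^3}\Gamma|U|^{p-1}U\cdot w\,dx=b(U,w)=b(U^-,w)=-\langle U^-,w\rangle$) turns this into
\[
J''[U](v,v)=-(p-1)t^2\!\int_{\R^3}\!\Gamma|U|^{p+1}\,dx+2(p-1)t\langle U^-,w\rangle-\interleave w\interleave^2-a(w,w).
\]
The real difficulty is the indefinite cross term $2(p-1)t\langle U^-,w\rangle$: I would absorb it via the Cauchy--Schwarz inequality for the semidefinite form $a$ (not for $\interleave\,\cdot\,\interleave$), noting $\langle U^-,w\rangle=-\tfrac1p a(U,w)$, so that for any $\theta\in(\tfrac{p-1}{p},1)$
\[
2(p-1)|t|\,|\langle U^-,w\rangle|\le\tfrac{(p-1)^2}{p\theta}\,t^2\!\int_{\R^3}\!\Gamma|U|^{p+1}\,dx+\theta\,a(w,w);
\]
this works precisely because $\tfrac{p-1}{p}<1$ leaves room. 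Since $(\theta-1)a(w,w)\le0$, this yields $J''[U](v,v)\le-c_1 t^2\int_{\R^3}\Gamma|U|^{p+1}\,dx-\interleave w\interleave^2$ with $c_1:=(p-1)(1-\tfrac{p-1}{p\theta})>0$. Finally, Lemma~\ref{bounds} and $J[U]=\tfrac{p-1}{2(p+1)}\int_{\R^3}\Gamma|U|^{p+1}\,dx$ give $\int_{\R^3}\Gamma|U|^{p+1}\,dx\ge\tfrac{2(p+1)}{p-1}\epsilon_2>0$, and on $N_0$ one has $\interleave U\interleave\le M_0<\infty$; combined with $\interleave v\interleave^2\le2t^2\interleave U\interleave^2+2\interleave w\interleave^2$ this gives $J''[U](v,v)\le-\delta\interleave v\interleave^2$ with $\delta:=\tfrac12\min\bigl(1,\tfrac{2(p+1)\epsilon_2}{(p-1)M_0^2}c_1\bigr)>0$, uniformly on $N_0$. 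Boundedness of $\partial_{X_U}G(U)$ together with this coercivity then gives invertibility with $\interleave(\partial_{X_U}G(U))^{-1}\interleave\le\delta^{-1}$ by the Lax--Milgram lemma.
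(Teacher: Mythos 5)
Your proof is correct and follows essentially the same route as the paper: identify $\langle \partial_{X_U} G(U)v,v\rangle$ with the Hessian $\langle\nabla^2 J[U]v,v\rangle$ for $v=tU+w\in X_U$, simplify using the Nehari--Pankov constraint, and obtain uniformity from Lemma~\ref{bounds} together with the boundedness of $N_0$. The only (cosmetic) difference is in absorbing the cross term: the paper completes the square pointwise in the integrand (exact in $X_{G_1}$, where all fields are pointwise parallel), while you use Cauchy--Schwarz and Young's inequality for the semidefinite Hessian form $a$ with $\theta\in(\tfrac{p-1}{p},1)$, which yields the same bound and does not rely on the symmetry of the subspace.
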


\begin{proof} For $U\in N$ and $v \in H_{G_1}^1(\R^3)$ we have 
\begin{align*}
G'(U)v =& \langle \nabla^2 J[U]v, U^+\rangle\frac{U^+}{\interleave U^+\interleave^2} + \langle \nabla J[U],v^+\rangle \frac{U^+}{\interleave U^+\interleave^2} + \langle \nabla J[U], U^+\rangle \frac{v^+}{\interleave U^+\interleave^2} \\
& - 2 \langle U^+,v\rangle \langle \nabla J[U],U^+\rangle \frac{U^+}{\interleave U^+\interleave^4} + (\nabla^2 J[U]v)^-.
\end{align*}
If we take $v \in X_U=[U]+H^- = [U^+]+ H^-$, i.e., $v = tU+w$ with $t\in \R$, $w\in H^-$ then the above formula simplifies to 
$$
\partial_{X_U} G(U)v =\langle \nabla^2 J[U]v, U^+\rangle\frac{U^+}{\interleave U^+\interleave^2}+(\nabla^2 J[U]v)^- =P_{X_U} \nabla^2 J[U]v,
$$
where $P_{X_U}: H\to X_U$ denotes the orthogonal projection with respect to $\langle\cdot,\cdot\rangle$. Therefore 
\begin{align*}
\langle \partial_{X_U} G(U) v,v\rangle =& \langle \nabla^2 J[U]v,v\rangle \\
=& t^2 \int_{\R^3} |\nabla \times U|^2 + V(x) |U|^2- p\Gamma(x) |U|^{p+1}\,dx \\
& +\int_{\R^3} |\nabla \times w|^2 + V(x) |w|^2- p\Gamma(x) |U|^{p-1} |w|^2\,dx \\
& + 2t \int_{R^3} \nabla \times U \cdot \nabla\times w + V(x) U\cdot w- p \Gamma(x) |U|^{p-1} U\cdot w\,dx
\end{align*}
and by using $U\in N$ we obtain
$$
\langle \partial_{X_U} G(U) v,v\rangle = -\interleave w \interleave^2 - \int_{\R^3} \Gamma(x) |U|^{p-1}\left(t^2(p-1) |U|^2 + p |w|^2+2t(p-1)U\cdot w\right) \,dx.
$$
Now we use the identity 
\begin{align*}
t^2(p-1) |U|^2 + p |w|^2+2t(p-1)U\cdot w &= t^2\frac{p-1}{p} |U|^2+ \left| \sqrt{p} w + \frac{p-1}{\sqrt{p}}t U \right|^2 \\
& \geq t^2\frac{p-1}{p} |U|^2
\end{align*}
to deduce
$$
\langle \partial_{X_U} G(U) v,v\rangle \leq -\interleave w \interleave^2- t^2\frac{p-1}{p}\int_{\R^3} \Gamma(x)|U|^{p+1}\,dx.
$$
Next we obtain from Lemma~\ref{bounds} that $J[U]= (\frac{1}{2}-\frac{1}{p+1})\int_{\R^3} \Gamma(x)|U|^{p+1}\,dx \geq \epsilon_2$ and hence we find 
$$
\langle \partial_{X_U} G(U) v,v\rangle \leq -\interleave w \interleave^2- 2t^2\frac{p+1}{p}\epsilon_2 \leq -C\interleave w+tU\interleave^2
$$
by using the boundedness of $N_0$. This finishes the proof.
\end{proof}

\begin{lemma} \label{manifold}
Assume {\rm (H-foc)}. The set $N$ is a $C^1$-manifold such that 
$$
N = G^{-1}\{0\} \mbox{ and } T_{U} N = \kernel G'(U)
$$
for every $U\in N$. 
\end{lemma}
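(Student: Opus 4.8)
The plan is to derive the statement from the identity $N=G^{-1}\{0\}$, the structural information on $G'$ and $\partial_{X_U}G(U)$ supplied by Lemma~\ref{linearization}, and the implicit function theorem.

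First I would check that $N$ lies inside $H^1_{G_1}(\R^3)\setminus H^-$, the domain of $G$: if $U\in H^-\setminus\{0\}$ then $U\in[U]\oplus H^-$ and $J'[U]U=-\interleave U\interleave^2-\int_{\R^3}\Gamma(x)|U|^{p+1}\,dx<0$ because $\essinf_{\R^3}\Gamma>0$, contradicting $U\in N$; hence $U^+\ne 0$ for every $U\in N$. Next, for $U$ in the domain of $G$ I would observe that the two summands defining $G(U)$ lie in $H^+$ and $H^-$ respectively, and that $H^+\perp H^-$ with respect to $\langle\cdot,\cdot\rangle$; thus $G(U)=0$ is equivalent to $\langle\nabla J[U],U^+\rangle=0$ and $P^-\nabla J[U]=0$ holding simultaneously, i.e.\ to $J'[U]\Phi=0$ for all $\Phi\in[U^+]\oplus H^-$. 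Since $U^-\in H^-$ one has $[U^+]\oplus H^-=[U]\oplus H^-$, so this is precisely the condition defining $N$, and therefore $N=G^{-1}\{0\}$.

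For the manifold property I would fix $U_0\in N$, set $X_{U_0}:=[U_0]+H^-=[U_0^+]+H^-$ with orthogonal complement $W_{U_0}:=H^+\cap[U_0^+]^{\perp}$ (orthogonality with respect to $\langle\cdot,\cdot\rangle$), let $P_0$ be the orthogonal projection onto $X_{U_0}$, and define $\widetilde G:=P_0\circ G$, a $C^1$ map from a small neighborhood $\mathcal{O}'$ of $U_0$ into the fixed space $X_{U_0}$. The key points are: (a) $\widetilde G$ and $G$ have the same zeros on $\mathcal{O}'$. Indeed, for every $U$ in the domain of $G$ one has $G(U)\in X_U:=[U]+H^-$ (the first summand of $G(U)$ is a multiple of $U^+$), and $P_0$ is injective on $X_U$ for $U$ near $U_0$, since $P_0$ restricts to the identity on $H^-$ while $P_0U^+=\langle U^+,U_0^+\rangle\interleave U_0^+\interleave^{-2}U_0^+\ne 0$ there; hence $\widetilde G(U)=0\Leftrightarrow G(U)=0$. (b) $\widetilde G$ is a submersion at $U_0$. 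Using $\langle\nabla J[U_0],U_0^+\rangle=0$ (valid since $U_0\in N$) to cancel the extra terms in the formula for $G'$ from Lemma~\ref{linearization}, one gets $G'(U_0)v=\partial_{X_{U_0}}G(U_0)v\in X_{U_0}$ for $v\in X_{U_0}$, so $\widetilde G'(U_0)|_{X_{U_0}}=\partial_{X_{U_0}}G(U_0)$, which by Lemma~\ref{linearization} is an isomorphism of $X_{U_0}$; consequently $\widetilde G'(U_0): H^1_{G_1}(\R^3)\to X_{U_0}$ is onto and $H^1_{G_1}(\R^3)=\kernel\widetilde G'(U_0)\oplus X_{U_0}$. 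The implicit function theorem then gives that, near $U_0$, $N=\widetilde G^{-1}\{0\}$ is a $C^1$-submanifold of $H^1_{G_1}(\R^3)$ with $T_{U_0}N=\kernel\widetilde G'(U_0)$.

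It remains to identify $\kernel\widetilde G'(U_0)=\kernel G'(U_0)$. One inclusion is clear; for the reverse, the same cancellation using $\langle\nabla J[U_0],U_0^+\rangle=0$ in the formula for $G'(U_0)$ shows $\range G'(U_0)\subseteq X_{U_0}$, whence $\widetilde G'(U_0)v=P_0G'(U_0)v=G'(U_0)v$ for all $v$, and the two kernels agree. This proves $T_{U_0}N=\kernel G'(U_0)$. The main difficulty is precisely that $G$ does not take values in a fixed closed subspace but in the varying family $X_U$; the projection device above — whose validity rests on the near-injectivity of $P_0$ on $X_U$ and on the range computation $\range G'(U_0)\subseteq X_{U_0}$ — is what reduces the whole statement to a standard implicit function theorem argument. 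The residual technicalities (continuity of $U\mapsto U^\pm$, smallness of $\mathcal{O}'$, and that $\kernel G'(U_0)$ together with $X_{U_0}$ spans $H^1_{G_1}(\R^3)$) are routine.
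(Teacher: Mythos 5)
Your argument is correct and follows essentially the same route as the paper: both project $G$ onto the fixed subspace $X_{U_0}=[U_0]+H^-$, use that $G$ takes values in the moving family $X_V$ together with the injectivity of the projection on nearby $X_V$ (the paper's condition $X_V\cap X_{U_0}^\perp=\{0\}$), invoke the isomorphism property of $\partial_{X_{U_0}}G(U_0)$ from Lemma~\ref{linearization}, and conclude via the submersion/implicit function theorem, identifying $T_{U_0}N=\kernel G'(U_0)$ through $\range G'(U_0)\subseteq X_{U_0}$. Your added verifications that $N\cap H^-=\emptyset$ and that $N=G^{-1}\{0\}$ are details the paper leaves implicit, but they do not change the method.
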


\begin{proof} As before, for $U\in N$ let $X_U= [U]+H^-$ and denote by $P_{X_U}: H^1_{G_1}(\R^3) \to X_U$ the orthogonal projection w.r.t. $\langle\cdot,\cdot\rangle$. Recall that the map $G:H^1_{G_1}(\R^3)\setminus H^-\to H^1_{G_1}(\R^3)$ satisfies 
\begin{equation}
G(X_U\setminus H^-) \subset X_U \mbox{ for all } U\in H^1_{G_1}(\R^3)\setminus H^-.
\label{property}
\end{equation}
Now fix $U\in N=G^{-1}(\{0\})$. There exists an open neighbourhood $\mathcal{O}$ of $U$ in $H^1_{G_1}(\R^3)\setminus H^-$ such that 
\begin{equation}
X_V \cap X_U^\perp = \{0\} \mbox{ for all } V\in \mathcal{O}.
\label{property2}
\end{equation}
For $V\in \mathcal{O}$ we have by \eqref{property}, \eqref{property2} the equivalence 
$$
G(V)=0 \Leftrightarrow (P_{X_U}\circ G)(V)=0.
$$
This shows that $N\cap \mathcal{O} = (P_{X_U}\circ G\mid_{\mathcal{O}})^{-1}(\{0\})$. But the map 
$P_{X_U}\circ G\mid_{\mathcal{O}}: \mathcal{O}\to X_U$ is a submersion, i.e., its derivative is surjective at every point of $\mathcal{O}$, because 
$$
(P_{X_U}\circ G)'(V)= P_{X_U} \circ G'(V): H^1_{G_1}(\R^3) \stackrel{G'(V)}{\to} X_V \stackrel{P_{X_U}}{\to} X_U
$$
and the first map $G'(V): H^1_{G_1}(\R^3)\to X_V$ is surjective by Lemma~\ref{linearization} and the second map $P_{X_U}: X_V \to X_U$ is an isomorphism by \eqref{property2}. Therefore, the submersion theorem of \cite{amr}, Theorem 3.5.4, applies and states that $N\cap\mathcal{O}$ is a submanifold of $H^1_{G_1}(\R^3)\setminus H^-$ and $T_U N = \kernel G'(U)$. 
\end{proof}

Notice that any nontrivial solution $U\in H^1_G(\R^3)$
of  \eqref{E:curl-curl-gen} belongs to the Nehari-Pankov manifold $N$. As a
consequence, one can show that the constraint $N$ produces a zero Lagrange
multiplier. The following is a much stronger statement.

\begin{lemma} \label{natural_constraint}
Assume {\rm (H-foc)}. Let $N_0$ be a bounded subset of $N$. Then there exists a constant $C_0>0$ such that the following holds: if $U \in N_0$ and $\nabla J[U]=\tau+\sigma$ where $\tau \in T_{U} N$ is the tangential component of $\nabla J[U]$ and $\sigma\perp\tau$ is the transversal component of $\nabla J[U]$ then 
$$
\|\nabla J[U]\|_{H^1} \leq C_0 \|\tau\|_{H^1}.
$$
\end{lemma}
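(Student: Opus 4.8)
The plan is to compare the $\langle\cdot,\cdot\rangle$-orthogonal decomposition $\nabla J[U]=\tau+\sigma$ with the algebraic direct sum $H^1_{G_1}(\R^3)=X_U\oplus T_UN$, where $X_U:=[U]\oplus H^-$. This splitting is available because, by \lemref{manifold} and \lemref{linearization}, $G'(U)$ maps $H^1_{G_1}(\R^3)$ onto $X_U$ with kernel $T_UN$, while $G'(U)|_{X_U}=\partial_{X_U}G(U)$ is an isomorphism (hence $X_U\cap T_UN=\{0\}$ and the sum exhausts $H^1_{G_1}(\R^3)$). Let $\Pi_U$ denote the projection of $H^1_{G_1}(\R^3)$ onto $T_UN$ along $X_U$. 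Applying $G'(U)$ to $v=\Pi_Uv+(I-\Pi_U)v$ and inverting $\partial_{X_U}G(U)$ yields the identity
\[
I-\Pi_U=\bigl(\partial_{X_U}G(U)\bigr)^{-1}\circ G'(U).
\]
The key point is that for $U\in N$ the defining relations of $N$ say exactly that $\nabla J[U]\perp X_U$, i.e.\ $\langle\nabla J[U],\Phi\rangle=J'[U]\Phi=0$ for all $\Phi\in X_U$. The whole proof then reduces to a uniform bound $\interleave\Pi_U\interleave\le C_1$ on $N_0$, followed by a short Hilbert-space computation.

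For the uniform bound I would argue as follows. By \lemref{linearization} one has $\langle\partial_{X_U}G(U)v,v\rangle\le-\delta\interleave v\interleave^2$ on $X_U$ with $\delta=\delta(N_0)>0$, hence $\interleave\partial_{X_U}G(U)v\interleave\ge\delta\interleave v\interleave$ and $\interleave(\partial_{X_U}G(U))^{-1}\interleave\le1/\delta$; it remains to bound $\interleave G'(U)\interleave$ uniformly on $N_0$. For $U\in N$ one has $\langle\nabla J[U],U^+\rangle=0$, so the formula from \lemref{linearization} reduces to $G'(U)v=\interleave U^+\interleave^{-2}\bigl(\langle\nabla^2J[U]v,U^+\rangle+\langle\nabla J[U],v^+\rangle\bigr)U^++(\nabla^2J[U]v)^-$, whence $\interleave G'(U)\interleave$ is controlled by $\interleave\nabla^2J[U]\interleave$, $\interleave\nabla J[U]\interleave$ and $\interleave U^+\interleave^{-1}$. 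On the bounded set $N_0$ the first two quantities are bounded ($1<p<5$ entering via Sobolev's inequality), and $\interleave U^+\interleave$ is bounded below: from the identity $J[U]=\tfrac{p-1}{2(p+1)}\int_{\R^3}(|\nabla\times U|^2+V|U|^2)\,dx=\tfrac{p-1}{2(p+1)}(\interleave U^+\interleave^2-\interleave U^-\interleave^2)$ valid on $N$ together with $J[U]\ge\epsilon_2$ (\lemref{bounds}) we get $\interleave U^+\interleave^2\ge\tfrac{2(p+1)}{p-1}\epsilon_2$. Hence $\interleave G'(U)\interleave\le C_2$ on $N_0$ and $\interleave\Pi_U\interleave\le1+C_2/\delta=:C_1$.

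To conclude, set $W:=\nabla J[U]$ and write $W=t+x$ with $t:=\Pi_UW\in T_UN$ and $x:=(I-\Pi_U)W\in X_U$. Since $W\perp X_U\ni x$ we obtain $0=\langle W,x\rangle=\langle t,x\rangle+\interleave x\interleave^2$, so $\langle t,x\rangle=-\interleave x\interleave^2$ and thus $\interleave W\interleave^2=\interleave t\interleave^2-\interleave x\interleave^2$. On the other hand, as $\tau$ is the $\langle\cdot,\cdot\rangle$-orthogonal projection of $W$ onto $T_UN$ and $t\in T_UN$, we have $\langle\tau,t\rangle=\langle W,t\rangle=\interleave t\interleave^2+\langle x,t\rangle=\interleave t\interleave^2-\interleave x\interleave^2=\interleave W\interleave^2$. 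Combining, $\interleave W\interleave^2=\langle\tau,t\rangle\le\interleave\tau\interleave\,\interleave t\interleave\le C_1\interleave\tau\interleave\,\interleave W\interleave$, so $\interleave\nabla J[U]\interleave\le C_1\interleave\tau\interleave$, and the asserted inequality with $\|\cdot\|_{H^1}$ follows from the equivalence of $\interleave\cdot\interleave$ and $\|\cdot\|_{H^1}$. (Taking $\tau=0$ also recovers the qualitative statement that $N$ is a natural constraint.)

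I expect the uniform bound $\interleave\Pi_U\interleave\le C_1$ to be the only delicate point; geometrically it says that the subspaces $X_U$ and $T_UN$ meet at an angle bounded away from zero as $U$ ranges over $N_0$, and its proof amounts to verifying that every constant supplied by \lemref{bounds} and \lemref{linearization} (the definiteness constant $\delta$, the lower bound for $\interleave U^+\interleave$, the upper bounds for $\interleave\nabla J[U]\interleave$ and $\interleave\nabla^2J[U]\interleave$) is uniform over $N_0$. Once this is in place, the closing Hilbert-space manipulation is routine.
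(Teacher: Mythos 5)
Your argument is correct, and at the decisive step it takes a genuinely different route from the paper, while resting on the same three ingredients: the uniform estimate $\langle \partial_{X_U}G(U)v,v\rangle\le-\delta\interleave v\interleave^2$ on $X_U$ from Lemma~\ref{linearization}, the uniform boundedness of $G'(U)$ on the bounded set $N_0$ (for which your lower bound $\interleave U^+\interleave^2\ge\tfrac{2(p+1)}{p-1}\epsilon_2$, deduced from $J[U]\ge\epsilon_2$, is exactly the right supplement to Lemma~\ref{bounds}), and the Nehari--Pankov orthogonality $\nabla J[U]\perp X_U$. The paper converts these into the claim by writing the transversal part as $\sigma=\partial_{X_U}G(U)h$ with $h\in X_U$, justified through the identification $\range \partial_{X_U}G(U)|_{X_U}=(T_UN)^\perp$, and then testing with $h$; this is short, but the identification is the least transparent point of that proof, since for $U\in N$ one has $G'(U)v=\bigl(\langle\nabla^2J[U]v,U^+\rangle+\langle\nabla J[U],v\rangle\bigr)\interleave U^+\interleave^{-2}U^+ +P^-\nabla^2J[U]v$ on all of $H^1_{G_1}(\R^3)$, so $(T_UN)^\perp$ is the closed span of $\nabla^2J[U]H^-$ together with $\nabla^2J[U]U^++\nabla J[U]$ and need not visibly coincide with $X_U=\range\partial_{X_U}G(U)|_{X_U}$. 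You avoid this entirely: you use the algebraic splitting $H^1_{G_1}(\R^3)=X_U\oplus T_UN$, prove the uniform angle-type bound $\interleave\Pi_U\interleave\le 1+C_2/\delta$ for the oblique projection onto $T_UN$ along $X_U$ via $\interleave(\partial_{X_U}G(U))^{-1}\interleave\le 1/\delta$ and $\interleave G'(U)\interleave\le C_2$, and finish with a two-line orthogonality computation that uses only $\nabla J[U]\perp X_U$; every constant is explicitly traced back to Lemmas~\ref{linearization} and \ref{bounds}, which is what the uniformity over $N_0$ requires. The points you leave implicit --- that $G'(U)$ maps into $X_U$ and restricts to $\partial_{X_U}G(U)$ on $X_U$ when $U\in N$ (both consequences of $\langle\nabla J[U],U^+\rangle=0$), and that the decomposition $\tau+\sigma$ is taken with respect to $\langle\cdot,\cdot\rangle$, the $\|\cdot\|_{H^1}$ statement following by norm equivalence --- are the same conventions the paper itself uses, so nothing essential is missing.
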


\begin{proof} By Lemma~\ref{linearization} the map $\partial_{X_U} G(U):X_U\to X_U$ has a bounded inverse and hence a closed range. Moreover, $\partial_{X_U} G(U)|_{X_U}= P_{X_U} \nabla^2 J[U]$ is symmetric as a composition of a second derivative and an orthogonal projection. Therefore 
$$
\range \partial_{X_U} G(U)|_{X_U} =\left(\kernel \partial_{X_U} G(U)|_{X_U} \right)^\perp = (T_{U}N)^\perp.
$$
If we consider 
$$
\nabla J[U] = \tau+\sigma \mbox{ with } \tau \in T_{U}N \mbox{ and } \sigma\in (T_{U}N)^\perp
$$
then there exists $h\in X_U$ such that 
\beq
\label{split}
\nabla J[U] = \tau +\partial_{X_U} G(U) h.
\eeq
Hence, using $h\in X_U=[U]+H^-$ and thus $\langle \nabla J[U], h\rangle=0$ we get from Lemma~\ref{linearization}   
$$
\langle \partial_{X_U} G(U) h,h\rangle = \langle -\tau, h\rangle \leq -\delta \|h\|_{H^1}^2.
$$
Using the Cauchy-Schwarz inequality and the equivalence of the norms $\|\cdot\|_{H^1}$ and $\interleave \cdot \interleave$ we get 
$$
\|h\|_{H^1} \leq C_0 \|\tau\|_{H^1}.
$$
By \eqref{split} and the boundedness of $G'(U)$ on bounded subsets of $N$ this implies the claim.
\end{proof}

By the previous lemma the tangential component of the gradient of $J$ at a point in $N$ controls the entire gradient. As a consequence there exists special minimizing sequences of $J|_N$, where the tangential part of the gradient converges to zero (a consequence of Ekeland's variational principle) and hence the entire gradient converges to $0$.

\begin{lemma} \label{ps}
Assume {\rm (H-foc)}. There exists a bounded Palais-Smale sequence
$(U_k)_{k\in \N}$ in $N$ such that 
$$
J[U_k]\to c:=\inf_N J, \quad J'[U_k]\to 0 \mbox{ as } k \to \infty.
$$
\end{lemma}

\begin{proof} As a consequence of Ekeland's variational principle, cf. Struwe~\cite{struwe}, there exists a minimizing sequence $(U_k)_{k\in\N}$ of $J|_N$ such that $ (J|_N)'(U_k) \to 0$, hence $J'(U_k) \to 0$ 
by Lemma~\ref{natural_constraint}.
\end{proof}

\noindent
{\em Proof of Theorem~\ref{foc}:} Let $(U_k)_{k\in \N}$ be the Palais-Smale
sequence from Lemma~\ref{ps}. If any subsequence of $(U_k)_{k\in \N}$ converges
in $L^{p+1}(\R^3)$ to zero, then by the definition of the Nehari-Pankov manifold this
sequence also converges to zero in $H^1(\R^3)$. This is impossible by
Lemma~\ref{bounds}. Therefore, the concentration compactness principle (cf. Lions~\cite{lions} or Lemma~1.21 in Willem~\cite{willem} suitably adapted to the vectorial case) implies that for every radius
$R>0$  
$$
\liminf_{k\in \N} \sup_{y\in \R^3} \int_{B_R(y)} |U_k|^2\,dx >0.
$$
Fix a value $R>0$. Then there exists a subsequence again denoted $(U_k)_{k\in\N}$,
centers $y_k\in \R^3$ and $\eta>0$ such that 
\beq
\int_{B_R(y_k)} |U_k|^2\,dx \geq \eta \mbox{ for all } k\in \N.
\label{integrals}
\eeq
By possibly adding the value 1 to $R$ we may assume that $y_k^3 \in \Z$ and \eqref{integrals} still holds.
Now we claim that $\rho_k^2 := (y_k^1)^2+(y_k^2)^2$ is bounded. Assume the
contrary. Due to the symmetries in $H_{G_1}^1(\R^3)$ we have that 
$$
\int_{B_R(y_k)} |U_k|^2\,dx = \int_{B_R(\tilde y_k)} |U_k|^2\,dx 
$$
whenever the point $\tilde y_k$ is such that 
$$
y_k^3=\tilde y_k^3 \mbox{ and } (y_k^1)^2+(y_k^2)^2=(\tilde y_k^1)^2+(\tilde
y_k^2)^2. \leqno(P)
$$
Notice that the number of disjoint balls $B_R(\tilde y_k)$ with centers $\tilde
y_k$ satisfying $(P)$ tends to infinity if $\rho_k\to \infty$ as $k\to \infty$. But this is
impossible since the $L^2$-norm of $(U_k)_{k\in \N}$ is bounded. Thus, if we
define 
$$
\rho := R+\sup_{k\in \N} \rho_k
$$
then
$$
\int_{B_\rho(0,0,y_k^3)} |U_k|^2\,dx \geq \eta \mbox{ for all } k \in \N.
$$
Set 
$$
\bar U_k(x_1,x_2,x_3) := U_k(x_1,x_2,x_3+y_k^3).
$$
Then, due to the periodicity of $V, \Gamma$ in the $x_3$-variable we have
$$
\bar U_k \in N, \quad J[\bar U_k]= J[U_k]\to c,\quad \|J'[\bar U_k]\|_\ast=\|J'[U_k]\|_\ast \to 0 \mbox{
as } k \to \infty
$$
and 
$$
\int_{B_\rho(0)} |\bar U_k|^2\,dx \geq \eta \mbox{ for all } k \in \N.
$$
Now we may take a weakly converging subsequence (again denoted by $(\bar
U_k)_{k\in\N}$) with $\bar U_k \rightharpoonup \bar U\not =0$ in $H_{G_1}^1(\R^3)$
for some $\bar U \in H_{G_1}^1(\R^3)$. Moreover, for every $\phi\in H^1_{G_1}(\R^3)\cap C_0^\infty(\R^3)$ we have 
$J'[\bar U]\phi = \lim_{k\to \infty} J'[\bar U_k]\phi=0$ due to weak convergence and the compact embedding $H^1_{G_1}(\R^3)\to L^q(K)$, $K=\supp\,\phi$ and $q\in [1,6)$. Hence $\bar U$ is a critical point of $J$ so that $\bar U\in N$. Hence 
\begin{align*}
J[\bar U] &= \left(\frac{1}{2}-\frac{1}{p+1}\right) \int_{\R^3} \Gamma(x) |\bar
U|^{p+1}\,dx \\
& \leq \liminf_{k\in \N}\left(\frac{1}{2}-\frac{1}{p+1}\right) \int_{\R^3}
\Gamma(x) |\bar U_k|^{p+1}\,dx \\
& = \liminf_{k\in \N} J[\bar U_k] \\
& =c
\end{align*}
and therefore ``$=$'' holds  and $\bar U$ is a minimizer of $J$ restricted to
$N$. \qed

\section*{Appendix}

\begin{lemma} Let $J_1$ denote the order one Bessel function which is regular at $0$. Then the function
$$
\alpha(x):=\frac{J_1(x)}{xJ_1'(x)} 
$$
satisfies $\lim_{x\to 0+}\alpha(x)=1$ and is strictly increasing on the intervals $(0, j_1')$, $(j_{k}', j_{k+1}')$ for $k=1,2,3,\ldots$. Let $K_1$ denote the order one modified Bessel function which decreases exponentially at infinity. Then the function
$$
\beta(x):=\frac{K_1(x)}{xK_1'(x)}
$$
satisfies $\lim_{x\to 0+} \beta(x)=-1$ and is negative and strictly increasing on $(0,\infty)$.
\label{prop_bessel}
\end{lemma}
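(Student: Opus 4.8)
The plan is to reduce each monotonicity statement to the sign of a single auxiliary function built from the Bessel function and its first derivative, and then to fix that sign via a first-order identity obtained from the governing ODE. I begin with $\alpha(x)=J_1(x)/\bigl(xJ_1'(x)\bigr)$. From the series $J_1(x)=\tfrac x2+O(x^3)$, $J_1'(x)=\tfrac12+O(x^2)$ near $0$ one reads off $\alpha(x)\to1$ as $x\to0^+$. For monotonicity I differentiate $\alpha$ and use Bessel's equation $x^2J_1''+xJ_1'+(x^2-1)J_1=0$ to eliminate $J_1''$, obtaining
\[
\alpha'(x)=\frac{N(x)}{x^3\,J_1'(x)^2},\qquad N(x):=x^2J_1'(x)^2+(x^2-1)J_1(x)^2.
\]
The decisive point is the clean identity $N'(x)=2xJ_1(x)^2$, again a consequence of Bessel's equation; since $N(0^+)=0$ it gives $N(x)=\int_0^x2tJ_1(t)^2\,dt>0$ for all $x>0$. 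As $J_1'$ vanishes precisely at $j_1'<j_2'<\cdots$, the formula for $\alpha'$ then yields $\alpha'>0$ on exactly the intervals $(0,j_1')$ and $(j_k',j_{k+1}')$.

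For $\beta(x)=K_1(x)/\bigl(xK_1'(x)\bigr)$ I use the classical facts $K_1>0$ and $K_1'<0$ on $(0,\infty)$, so that $\beta<0$, together with $K_1(x)\sim1/x$ as $x\to0^+$, giving $\beta(x)\to-1$. The same computation, now with the modified Bessel equation $x^2K_1''+xK_1'-(x^2+1)K_1=0$, yields
\[
\beta'(x)=\frac{M(x)}{x^3\,K_1'(x)^2},\qquad M(x):=x^2K_1'(x)^2-(x^2+1)K_1(x)^2,
\]
and, again by the ODE, $M'(x)=-2xK_1(x)^2<0$, so $M$ is strictly decreasing. It remains to see $M>0$ on $(0,\infty)$, which I would prove by contradiction without computing $M$ at either endpoint: if $M(x_0)\le0$ for some $x_0$, then $M<0$ on $(x_0,\infty)$, hence $\beta'<0$ there, so $\beta$ is strictly decreasing on that half-line; but $\beta$ is negative everywhere while $\beta(x)\to0$ as $x\to\infty$ (because $K_1'(x)/K_1(x)\to-1$, so $xK_1'(x)/K_1(x)\to-\infty$), and a strictly decreasing negative function on a half-line cannot tend to $0$. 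This contradiction forces $M>0$, hence $\beta'>0$ on $(0,\infty)$.

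The two differentiation-and-simplification steps are routine. The one genuinely delicate point is the positivity of $M$ as $x\to\infty$, where its two terms cancel at leading order; the contradiction argument is tailored precisely to avoid a second-order asymptotic expansion, although one could alternatively push $K_1(x)=\sqrt{\pi/(2x)}\,e^{-x}\bigl(1+\tfrac3{8x}+\cdots\bigr)$ to obtain $M(x)\sim\tfrac\pi2e^{-2x}>0$ directly. Every Bessel-function fact used here — the expansions at $0$, the defining ODEs, the signs of $K_1$ and $K_1'$, and the exponential asymptotics at infinity — is standard and can be quoted from \cite{AbrSteg}.
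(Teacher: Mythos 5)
Your proposal is correct, and for the $J_1$ part it is essentially the paper's argument in different clothing: the identity $N'(x)=2xJ_1(x)^2$ with $N(0^+)=0$ is exactly what the paper obtains by multiplying Bessel's equation by $J_1'$ and integrating from $0$ to $x$, yielding $x^2J_1'(x)^2-(1-x^2)J_1(x)^2=\int_0^x 2sJ_1(s)^2\,ds>0$.

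For the $K_1$ part you deviate only in the last step. The paper integrates the same first-order identity from $x$ to $\infty$: since $K_1,K_1'$ decay exponentially, each of the two terms $s^2K_1'(s)^2$ and $(1+s^2)K_1(s)^2$ tends to $0$ separately, so the boundary terms at infinity vanish and one gets $M(x)=\int_x^\infty 2sK_1(s)^2\,ds>0$ outright. In other words, the point you flag as ``genuinely delicate'' is not: you do not need the cancellation of the two leading terms, only that $M(x)\to 0$ at infinity, which follows from crude exponential bounds; combined with $M'<0$ this gives $M>0$ immediately. Your contradiction argument through the sign and limit of $\beta$ (using $K_1>0$, $K_1'<0$ and $K_1'/K_1\to-1$) is valid and reaches the same conclusion, just more circuitously. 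One further small difference: for $\beta(0^+)=-1$ the paper argues via the identity $xK_1'(x)+K_1(x)=-xK_0(x)$ and $xK_0(x)\to 0$, which avoids having to justify differentiating the asymptotic $K_1(x)\sim 1/x$; your version should either cite the expansion of $K_1'$ explicitly or use this identity.
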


\begin{proof} Since $J_1$ is analytic and since $J_1(0)=0, J_1'(0)=1/2$ the relation $\alpha(x)\to 1$ as $x\to 0$ follows immeadiately. Differentiating $\alpha(x)$, we need to show
$$
\frac{x (J_1'(x))^2-J_1(x)(J_1'(x)+x J_1''(x))}{x^2 (J_1'(x))^2}>0
$$
and by using the differential equation for $J_1$ this amounts to
$$
x\left((J_1'(x))^2 + \left(1-\frac{1}{x^2}\right)J_1(x)^2\right)>0,
$$
i.e.,
$$
x^2(J_1'(x))^2 > (1-x^2) J_1(x)^2 \mbox{ for } x \in (0,\infty).
$$
If we multiply the differential equation
$$
x^2 J_1''(x) +x J_1'(x) = (1-x^2)J_1(x)
$$
by $J_1'(x)$ and integrate from $0$ to $x$ we obtain
$$
\int_0^x s^2 \frac{d}{ds}(J_1'(s))^2+2s (J_1'(s))^2\,ds = \int_0^x (1-s^2)\frac{d}{ds} (J_1(s)^2)\,ds.
$$
Integration by parts and using $J_1(0)=0$ leads to 
$$
x^2 (J_1'(x))^2 = (1-x^2)J_1(x)^2 + \int_0^x 2s J_1(s)^2\,ds > (1-x^2)J_1(x)^2 \mbox{ for } x \in (0,\infty)
$$
and hence the result is proved.

\medskip

Now we turn to the statement for $\beta$. First we recall that $x K_0(x)\to 0$ (cf. \cite{gradshteyn_ryzhik}, 8.447), $K_1(x)\to \infty$ as $x\to 0$ (cf. \cite{gradshteyn_ryzhik}, 8.451(6.)) and that $x K_1'(x)+K_1(x)=-x K_0(x)$ (cf. \cite{gradshteyn_ryzhik}, 8.486(12.)). This implies  $\beta(x)\to -1$ as $x\to 0$. For the monotoniticy of $\beta(x)$ it suffices by differentiation to prove that
$$
x(K_1'(x))^2 - K_1(x) K_1'(x) - x K_1(x)K_1''(x) >0
$$
and using the differential equation this amounts to showing that 
$$
x^2 (K_1'(x))^2> (1+x^2) K_1(x)^2 \mbox{ for all } x\in (0,\infty).
$$
If we multiply the differential equation
$$
x^2 K_1''(x) +x K_1'(x) = (1+x^2)K_1(x)
$$
by $K_1'(x)$ and integrate from $x$ to $\infty$ we obtain
$$
\int_x^\infty s^2 \frac{d}{ds}(K_1'(s))^2+2s (K_1'(s))^2\,ds = \int_x^\infty (1+s^2)\frac{d}{ds} (K_1(s)^2)\,ds.
$$
Integration by parts and using the exponential decay of $K_1, K_1', K_1''$ at infinity leads to 
$$
-x^2 (K_1'(x))^2 = -(1+x^2)K_1(x)^2 -\int_x^\infty 2s K_1(s)^2\,ds < -(1+x^2)K_1(x)^2 \mbox{ for } x \in (0,\infty).
$$
This proves the result.
\end{proof}

\bibliographystyle{abbrv}	
\bibliography{bibliography}

\end{document}